\newtheorem{theorem}{Theorem}[section]
\newtheorem{corollary}[theorem]{Corollary}
\newtheorem{lemma}[theorem]{Lemma}
\newtheorem{remark}{Remark}[section]
\newtheorem{example}{Example}[section]
\let\vrb@comment    \comment
\let\comment        \@undefined
\let\vrb@endcomment \endcomment
\def\tcb@@end@v@@d{%
  \vrb@endcomment%
  \tcb@layer@dec%
}
\def\tcb@@capture@v@@d{%
  \let\endtcolorbox\tcb@@end@v@@d%
  \vrb@comment%
}
\newtcolorbox{blackbox}{boxrule=.5pt, sharp corners, colback=white, colframe=black, notitle, breakable, enhanced}
\newcommand{\Asb}{{\mycheck{U}}} 
\newcommand{\As}{{\mycheck{\overbar{U}}}} 
\newcommand{\Asp}{{\mycheck{G}}} 
\newcommand{\Ase}{{\mycheck{X}}} 
\newcommand{\Ts}{{\mycheck{V}}} 
\newcommand{\Tsp}{{\mycheck{H}}} 
\newcommand{\Tse}{{\mycheck{Y}}} 
\newcommand{\Bp}{{\mycheck{B}}_{\!\circ}} 
\newcommand{\Bb}{{\mycheck{B}}} 
\newcommand{\B}{{\mycheck{\overbar{B}}}} 
\newcommand{\bb}{{\mycheck{b}}} 
\renewcommand{\b}{{\mycheck{\overline{b}}}} 
\newcommand{\cL}{\mycheck{\mathcal{L}}} 
\newcommand{\Lis}{\mycheck{\cL_{\mathrm{is}}}} 
\newcommand{\crefl}{\labelcref}		
\begin{document}

\setcounter{page}{1}

\title{A unified framework for the analysis, numerical approximation and model reduction of linear operator equations,\\ Part I: Well-posedness in space and time}
\author{Moritz Feuerle\footnote{Institute for Numerical Mathematics, Ulm University, Germany (moritz.feuerle@uni-ulm.de)}, 
    Richard L\"{o}scher\footnote{Institute of Applied Mathematics, TU Graz, Austria (loescher@math.tugraz.at)}, 
    Olaf Steinbach\footnote{Institute of Applied Mathematics, TU Graz, Austria (o.steinbach@tugraz.at)}, 
    Karsten Urban\footnote{Institute for Numerical Mathematics, Ulm University, Germany (karsten.urban@uni-ulm.de)}}
\maketitle

\begin{abstract}
    We present a unified framework to construct well-posed formulations for large classes of linear operator equations including elliptic, parabolic and hyperbolic partial differential equations. This general approach incorporates known weak variational formulations as well as novel space-time variational forms of the hyperbolic wave equation. The main concept is completion and extension of operators starting from the strong form of the problem.

    This paper lays the theoretical foundation for a unified approach towards numerical approximation methods and also model reduction of parameterized linear operator equations which will be the subject of the following parts.
\end{abstract}

\section{Introduction}\label{ch: introduction}
Linear operator equations can, e.g., be derived from partial differential and integral equations. Typical examples include elliptic, parabolic and hyperbolic second order partial differential equations (PDEs) as well as boundary integral equations. The analysis of well-posedness (existence, uniqueness and stability of solutions) as well as the construction and investigation of numerical approximation methods are usually done problem-specific and different for the above mentioned classes of problems.

The aim of this paper is to provide a unified framework for the analysis, numerical approximation and model reduction of linear operator equations. In particular we aim at deriving a general approach which allows us to construct well-posed and optimally stable\footnote{In the sense that the norms of the operator and its inverse are equal to $1$.} variational formulations for rather general linear operator equations. This is the scope of part I of this paper series. The forthcoming part will then concentrate on deriving corresponding discretizations and numerical approximations which will benefit from the general framework constructed in this first part. Finally, the close relation between the approximation error and the residual will then also allow us to construct a quite general approach towards model reduction of \emph{parameterized} linear operator equations. 

In such a general framework we are able to cover stationary elliptic as well as time-dependent parabolic and hyperbolic problems. Elliptic problems are formulated in Sobolev and the time-dependent problems in Lebesgue-Bochner spaces using space-time variational formulations. The latter ones are well established for parabolic problems, but less is known for transport, wave and Schrödinger-type problems. The framework presented in this paper allows for well-posed variational formulations for all those equations. We shall detail the general approach to several examples, which will also include known weak and ultra-weak formulations. To the very best of our knowledge, this is a novel framework. It is, however, related to \cite{DahmeHSW2012}, which focuses on ultra-weak formulations. We will comment on similarities and differences throughout the paper. Our approach also extends existing results in a unified manner, recovering also well-known formulations for elliptic and parabolic problems, while also allowing for well-posed formulations for hyperbolic problems.

The remainder of this paper is organized as follows. After collecting some notation and guiding examples in the sequel of this section,  Section \ref{ch: theory} is devoted to the presentation of the theoretical foundation of our general framework. The main technical ingredient is completion and extension of operators to be presented in \S\ref{subsec: compl_ext}. We show applications to the Poisson problem, the heat equation and the wave equation in Section \ref{ch: applications}. The paper ends with some conclusions and an outlook in Section \ref{sec:conclusions}.

\subsection{Notation and basic facts}
All spaces are assumed to be vector spaces over a common field $\F\in\set{\R,\C}$. Thereby, for a normed vector space $\Ase$, we denote by $\norm{\cdot}_\Ase$ its norm and by $\Ase'$ its dual space ($\F=\R$) or anti-dual space ($\F=\C$). If $\Ase$ is an inner product space, we denote its inner product by $\inner{\cdot,\cdot}_\Ase$. Hence, the inner product is a bilinear ($\F=\R$) or sesquilinear\footnote{linear in the first and anti-linear in the second argument} form ($\F=\C$). Further, we denote by $\pairing{\cdot,\cdot}{\Ase}:\Ase'\times \Ase\ra\F$ the duality pairing (or evaluation map), given by $\pairing{f,x}{\Ase} \deq f(x)$ for $f\in \Ase'$ and $x\in \Ase$, which is also  a bilinear or sesquilinear form, respectively. For two Banach spaces $\Ase,\Tse$, we denote the space of all \emph{linear and bounded operators} by 
$\cL(\Ase,\Tse') \deq \set{B:\Ase\ra \Tse'\,:\, B \text{ is linear and } \norm{B}_{\cL(\Ase,\Tse')} < \infty}$, 
with $\norm{B}_{\cL(\Ase,\Tse')} \deq \sup\limits_{x\in \Ase\setminus\set{0}} \frac{\norm{Bx}_{\Tse'}}{\norm{x}_\Ase},$
and the space of all \emph{linear isomorphisms} by
\begin{equation*}
    \Lis(\Ase,\Tse') \deq \set{B \in \cL(\Ase,\Tse')\,:\, B \text{ bijective and } B^{-1} \in \cL(\Tse',\Ase)}.\label{eq: Lis}
\end{equation*}
As usual, we call $B \in \cL(\Ase,\Tse')$ \emph{isometric}, if $\norm{B}_{\cL(\Ase,\Tse')} = 1$. Given $B\in \cL(\Ase,\Tse')$ and $f\in \Tse'$, seeking for an unknown $x\in\Ase$, we call $Bx=f$ in $\Tse'$ interpreted as $\pairing{Bx,y}{\Tse} = \pairing{f,y}{\Tse}$ for all $ y\in\Tse$, a \emph{linear operator equation}. We call it \emph{well-posed} (in the sense of Hadamard, \cite{Hadamard}) if $B\in\Lis(\Ase,\Tse')$ and \emph{optimally stable} if $B$ is isometric. Further, for a Hilbert space $\Tsp$, we denote by $R_{\Tsp} \in \Lis(\Tsp,\Tsp')$ the isometric \emph{Riesz operator} given by
\begin{equation}\label{eq: riesz operator}
    \pairing{R_{\Tsp}x,y}{\Tsp} \deq \inner{x,y}_\Tsp\quad\forall x,y\in\Tsp.
\end{equation}
In addition, it holds $\inner{\cdot,\cdot}_{\Tsp'} = \pairing{\cdot,R_{\Tsp}^{-1}\cdot}{\Tsp}$, $\norm{\cdot}_{\Tsp} = \norm{R_{\Tsp}\cdot}_{\Tsp'}$ and $\norm{\cdot}_{\Tsp'} = \norm{R_{\Tsp}^{-1}\cdot}_{\Tsp}$.
For two normed vector spaces $\Ase,\Tse$ with $\Ase\subseteq \Tse$, we call $\Ase$ \emph{continuously embedded} in $\Tse$ (abbreviated \enquote{$\Ase\hookrightarrow \Tse$}) if there exists an \emph{embedding constant} $C<\infty$, such that $\norm{x}_\Tse \leq C\norm{x}_\Ase$ for all $x\in \Ase$.
Further, we denote by $\Ase\subseteq_d \Tse$, that $\Ase$ is dense in $\Tse$, and by $\Ase \hookrightarrow_d \Tse$, that $\Ase \subseteq_d \Tse$ and $\Ase \hookrightarrow \Tse$.
We collect some well-known facts.
\begin{remark}\label{rm: continuous embedding}
    If $\Ase\subseteq_d \Tse\hookrightarrow_d Z$, then $\Ase \subseteq_d Z$ and $\Ase \hookrightarrow Z$ if $\Ase\hookrightarrow \Tse\hookrightarrow Z$.
\end{remark}

\begin{remark}[{\cite[\S 5 Remark 3]{Brezi2011}}]\label{rm: gelfand}
    Let $\Ase \hookrightarrow_d \Tsp$ be Banach spaces with embedding constant $C$.
    \begin{compactenum}[(i)]
        \item It holds $\Tsp' \hookrightarrow \Ase'$ with embedding constant $C$ and $\Tsp' \hookrightarrow_d \Ase'$, if $\Ase$ is reflexive.
        \item Let $\Tsp$ be a Hilbert space. Identifying $\Tsp \cong \Tsp'$ using the Riesz operator, we get
              \begin{equation}\label{eq: gelfand}
                  \Ase \hookrightarrow_d \Tsp \cong \Tsp' \hookrightarrow \Ase',
              \end{equation}
              as well as $\norm{x}_\Tsp \leq C\norm{x}_\Ase$, $x\in \Ase$, $\norm{x}_{\Ase'} \leq C\norm{x}_\Tsp$, $x\in \Tsp$ 
              and for the duality paring holds
              \begin{align}
                  \pairing{f,x}{\Ase} = \inner{f,x}_\Tsp \qquad\forall f\in \Tsp,\ \forall x\in \Ase.\label{eq: gelfand pairing = product}
              \end{align}
              Eq.\ \eqref{eq: gelfand} is usually called a \emph{Gelfand triple}, denoted by $(\Ase,\Tsp,\Ase')$.
              Note: If $\Ase$ is a Hilbert space itself, there exists the natural Riesz isomorphism between $\Ase$ and $\Ase'$, but it is not viewed as the identity map, i.e., $\Ase\not\cong \Ase'$. Instead, we use the Riesz isomorphism between $\Tsp$ and $\Tsp'$ as the identity map to identify $\Tsp \cong \Tsp'$. To make things clear, for $x\in \Ase$, the dual norm is to be understood as $\norm{x}_{\Ase'} = \norm{R_\Tsp x}_{\Ase'} = \sup\limits_{y\in X}\frac{\inner{x,y}_\Tsp}{\norm{x}_\Ase}$ and not as $\norm{R_\Ase x}_{\Ase'}= \sup\limits_{y\in X}\frac{\inner{x,y}_\Ase}{\norm{x}_\Ase}$.
    \end{compactenum}
\end{remark}

\subsubsection*{Function spaces}
Let $I\deq (0,T)$, $0<T<\infty$ be a time interval, $\Omega\subset\R^d$, $d\in\N$, be an open bounded spatial domain with Lipschitz boundary for $d\geq 2$ and let $Q \deq I\times\Omega$ denote the space-time domain.
We denote by $C^{k;\ell}(Q)$, $k, \ell\in\N_0$,  the set of all $C(Q)$ functions where all partial derivatives w.r.t.\ $t\in I$ up to order $k$ and all partial derivatives w.r.t.\ $x\in\Omega$ up to order $\ell$ are in $C(Q)$, define $C_0(\overline{\Omega})\deq\set{u\in C(\overline{\Omega}):u\vert_{\partial\Omega}=0}$ and we denote the space of continuous functions with zero boundary in space and $k^{\text{th}}$-order zero initial conditions in time by
\begin{equation*}
        C_{0,;0}^{(k)}(\overbar{Q}) \deq \set{u\in C(I\times\overline{\Omega})\cap C^{k;0}([0,T)\times \Omega)\,:\, u\vert_{I\times\partial\Omega}=0,\  \partial^\alpha_tu(0)=0,\ \alpha=0,..., k}.
\end{equation*}
For function spaces in time on $\bar{I}$, we shall always denote homogeneous initial conditions $u(0)=0$ by the index \enquote{$0,$} and homogeneous terminal conditions $u(T)=0$ by \enquote{$,0$}. As usual, $C^\infty_0(\Omega)$ is the space of compactly supported functions on $C^\infty(\Omega)$.

For Hilbert spaces $\Ase$ and a domain $\Omega$ (which could also be a time interval $I$ or a space-time domain $Q$), we denote by $L^2(\Omega;\Ase)$, $H^1(\Omega;\Ase)$ and $H^1_0(\Omega;\Ase)$ the  Lebesgue-Bochner, Sobolev-Bochner and zero-trace Sobolev-Bochner Hilbert space (see, e.g., \cite{HytoeNVW2016} for an introduction to Bochner spaces), endowed with the inner product $\inner{u,v}_{L^2(\Omega;\Ase)} \deq \bintegral{\Omega}{}{\,\inner{u(x),v(x)}_{\Ase}}{x}$, $\inner{u,v}_{H^1(\Omega;\Ase)} \deq \inner{u,v}_{L^2(\Omega;\Ase)} + \inner{\nabla u,\nabla v}_{L^2(\Omega;\Ase)}$ and $\inner{u,v}_{H^1_0(\Omega;\Ase)} \deq \inner{\nabla u,\nabla v}_{L^2(\Omega;\Ase)}$, respectively. Further, we denote by $H^{-1}(\Omega;\Ase) \equiv [H^1_0(\Omega;\Ase)]'$ the dual space of $H^1_0(\Omega;\Ase)$ and define the Hilbert space $H^\Delta(\Omega;\Ase) \deq \set{u\in H^1(\Omega;\Ase):\Delta u \in L^2(\Omega;\Ase)}$ endowed with $\inner{u,v}_{H^\Delta(\Omega;\Ase)} \deq \inner{u,v}_{H^1(\Omega;\Ase)}+\inner{\Delta u,\Delta v}_{L^2(\Omega;\Ase)}$. 
In addition, for a time interval $I$, it holds $H^1(I;\Ase)\hookrightarrow C(\overbar{I};\Ase)$ by \cite[Proposition II.5.11]{BoyerF2013}, i.e., point evaluations $u(t) \in \Ase$, $t\in\overbar{I}$ are well-defined for $u\in H^1(I;\Ase)$, thus we can define the Hilbert spaces
\begin{gather*}
    H^1_{0,}(I;\Ase) \deq \set{u\in H^1(I;\Ase)\,:\,u(0)=0},\quad
    H^1_{,0}(I;\Ase) \deq \set{u\in H^1(I;\Ase)\,:\,u(T)=0},
\end{gather*}
endowed with the inner product $\inner{\cdot,\cdot}_{H^1_0(I;\Ase)}$.
We write $L^2(\Omega)$, $H^1(\Omega)$, $H^1_0(\Omega)$, $H^{-1}(\Omega)$, $H^\Delta(\Omega)$, $H^1_{0,}(I)$ and $H^1_{,0}(I)$ for $L^2(\Omega;\R)$, $H^1(\Omega;\R)$, $H^1_0(\Omega;\R)$, $H^{-1}(\Omega;\R)$, $H^\Delta(\Omega;\R)$, $H^1_{0,}(I;\R)$ and $H^1_{,0}(I;\R)$, respectively.
Finally, for a second Hilbert space $\Tse$, we equip the vector space $\Ase\times\Tse$ with the inner product $\inner{\vec{u},\vec{v}}_{\Ase\times\Tse} \deq \inner{u_1,v_1}_{\Ase}+\inner{u_2,v_2}_{\Tse}$,
thus forming again a Hilbert space.

\subsection{Examples}\label{ch: introduction examples}
We collect examples which we will reconsider in the course of this paper. All these will be expressed (in strong form) as $\Bp u = f$  (or $\Bp \vec{u} = \vec{f}$), including initial and/or boundary conditions.

\begin{example}[Poisson equation]\label{ex: elliptic problem}
    For $f:\Omega\ra\R$ and $u:\overline{\Omega} \ra \R$, the Poisson equation takes the form $-\Delta u=f$ in $\Omega$ and $u=0$ on $\partial\Omega$.
\end{example}

\begin{example}[Heat equation]\label{ex: heat equation}
    For $f:Q\ra\R$ and $u:\overbar{Q}\ra\R$, the heat equation is expressed by $u_t -\Delta_x u = f$ in $Q$, $u(0) = 0$ in $\Omega$ and $u=0$ on $I\times \partial\Omega$.
\end{example}

\begin{example}[Wave equation]\label{ex: wave equation}
    For $f:Q\ra\R$ and $u:\overbar{Q}\ra\R$, the wave equation reads $u_{tt} -\Delta_x u =  f$ in $Q$, $u(0)= \partial_t u(0) = 0$ in $\Omega$ and $u = 0$ on $I\times\partial\Omega$.
\end{example}

\begin{example}[First-order in time wave equation]\label{ex: FO wave equation}
    For $\vec{f}: Q\ra\R^2$ and $\vec{u} :\overbar{Q}\ra\R^2$, the first-order in time formulation of the wave equation reads $\partial_t \vec{u} + A_\circ\vec{u} = \vec{f}$ in $Q$, $\vec{u}(0)=0$ in $\Omega$ and $\vec{u}=0$ on $I\times\partial\Omega$, 
    where $A_\circ :=  \begin{psmallmatrix}0 & -\Id\\ -\Delta_x & 0\end{psmallmatrix}$ and $\Id$ denoting the identity operator. For $\vec{f}=(0,f)$, this is equivalent to \cref{ex: wave equation} with $\vec{u}$ and $u$ being related by $\vec{u}=(u,\partial_t u)$.
\end{example}

\begin{remark}\label{rm:general elliptic operaotrs possible}
    For the sake of simplicity, we only consider homogeneous initial and boundary conditions and the Laplace operator $-\Delta_x$ in space, but the above examples can be extend directly to (i) inhomogeneous initial and/or boundary conditions $u(0)=g_0$ in $\Omega$, $\partial_t u(0) = g_2$ in $\Omega$ and $u=g_3$ on $\partial\Omega$ or $u=g_3$ on $I\times\partial\Omega$ using standard homogenization techniques; and (ii) a uniformly elliptic and bounded (time variant) spatial differential operator
    $A_\circ(t)u(x) \deq - \nabla_x \cdot \parens[\big]{\uline{A}(t,x) \nabla_x u(x)} + \uline{b}(t,x)\cdot \nabla_x u(x) + \uline{c}(t,x) u(x)$     
    replacing $-\Delta_x$, with $\uline{A}:Q\ra\R^{d\times d}$, $\uline{b}:Q\ra\R^d$ and $\uline{c}:Q\ra\R$ sufficiently smooth.
\end{remark}

\section{Well-posed optimally stable weak formulations}\label{ch: theory}

We present the announced general framework for optimally stable weak formulations for linear operator equations, which we will then apply to the above mentioned example problems.

\subsection{Operator equations}\label{ch: operator equations}

We are now going to formulate the class of linear operator equations that we will consider in the sequel.

\subsubsection*{Classical form}
We consider linear operator equations on function spaces. Typically, those operators are defined pointwise by their mapping properties (i.e., a differential or integral operator) and corresponding initial and/or boundary conditions. This means in particular that those kind of conditions are included in the definition of the operator. In order to distinguish between the pointwise interpretation of the operator and the subsequent variational form, we add the subscript  \enquote{${}^\circ$} to indicate the pointwise form. Hence, we start by $\Bp : D(\Bp)\to C(\Omega)$, where $\Omega\subset\R^d$ is the domain of the primitive variables on which the seeked function is defined (which may be replaced by a space-time cylinder $Q$, see \S\ref{ch: introduction examples}). Here,
\begin{equation*}
    D(\Bp) := \{ u\in C(\Omega):\, \Bp u\in C(\Omega)\}
\end{equation*}
is the \emph{classical domain} of the operator. Then, for $f\in C(\Omega)$, the classical/pointwise formulation of an operator equation amounts seeking $u^*_\circ \in \domain(\Bp)$ satisfying 
\begin{equation}\label{eq:analysis:classical problem}
    \Bp u^*_\circ = f\text{ in }\Omega,
    \quad\text{i.e.,}\quad
    \Bp u^*_\circ(x) = f(x)\,\, \forall x\in\Omega. 
\end{equation}

\begin{example}\label{ex: classical domains}
    With the operator equations in \S\ref{ch: introduction examples} their classical domains read:
    \begin{compactenum}[(i)]
        \item $\domain(\Bp) = C^2(\Omega) \cap C_0(\overline{\Omega})$ for the Poisson equation in \cref{ex: elliptic problem};
        \item $D(\Bp)=C^{1;2}(Q)\cap C_{0,;0}^{(0)}(\overbar{Q})$ for the heat equation in \cref{ex: heat equation};
        \item $D(\Bp)= C^{2;2}(Q)\cap C_{0,;0}^{(1)}(\overbar{Q})$ for the wave equation in \cref{ex: wave equation};
        \item $D(\Bp)= C^{1;2}(Q;\R^2)\cap C_{0,;0}^{(0)}(\overbar{Q};\R^2)$ for the first-order in time wave equation in \cref{ex: FO wave equation}.
    \end{compactenum}
\end{example}

\subsubsection*{Variational formulation}
The classical form of an operator equation can only be expected to be well-posed in exceptional cases (depending on $\Omega$ and the data). Hence, we consider a variational formulation and stress the fact that such a variational form is not unique. Thus, we introduce an abstract framework for variational formulations and detail it for the examples mentioned above.

For a general framework, let $\Asb$ be a Banach space (called the \emph{trial} or \emph{ansatz space}), $\Ts$ be a reflexive Banach space (called the \emph{test space}) and $\Tsp$ be a Hilbert space, such that $(\Ts,\Tsp,\Ts')$ forms a Gelfand triple, i.e.\ 
$\Ts\hookrightarrow_d\Tsp \cong \Tsp' \hookrightarrow_d\Ts'$. 
Then, a variational operator reads $\Bb: \domain(\Bb) \subseteq \Asb \ra \Ts'$, where $\Bb$ is a (possibly unbounded) linear operator defined on a linear subspace $\domain(\Bb) \subseteq \Asb$, called the \emph{domain} of $\Bb$, see \cite[\S2.6]{Brezi2011}. The operator $\Bb$ is called \emph{bounded}, if $\domain(\Bb) = \Asb$ and $\Bb\in \cL(\Asb,\Ts')$.
For given $f\in\Ts'$, an abstract variational formulation amounts seeking $u^*\in \domain(\Bb)$ satisfying the operator equation
\begin{equation}\label{eq:operator-equation}
    \Bb u^* = f\text{ in $\Ts'$},
    \quad\text{i.e.,}\quad
    \pairing{\Bb u^*, v}{\Ts} = \pairing{f, v}{\Ts}\quad \forall v\in\Ts.
\end{equation}
As $\Bb$ is neither assumed to be bounded nor bijective, \eqref{eq:operator-equation} is in general not well-posed.

\subsubsection*{Examples}\label{ch: variational formulations examples}
We indicate some possible variational formulations for the examples introduced in \S\ref{ch: introduction examples}. Generally speaking, to derive a variational formulation \eqref{eq:operator-equation} of the classical form \eqref{eq:analysis:classical problem}, select a Hilbert space $\Tsp$, typically $\Tsp \deq L^2(\Omega)$, multiply by a test function $v \in C^\infty_0(\Omega)$ and, in case of $\Tsp=L^2(\Omega)$, integrate over $\Omega$, leading to
\begin{equation*}
    \inner{\Bp u,v}_{\Tsp} = \inner{f,v}_{\Tsp}\qquad \forall u\in\domain(\Bp),\ \forall v\in C^\infty_0(\Omega).
\end{equation*}
After possibly applying integration by parts on the left-hand side, this gives rise to a variational operator $\Bb$, for wich the domain $\domain(\Bb)$ and codomain $\Ts'$ are now to be determined. Starting by the codomain, $\Ts$ has to be a reflexive Banach space such that $\Ts \hookrightarrow_d \Tsp$ while the relation $\inner{\Bp u,v}_{\Tsp} = \pairing{\Bb u,v}{\Ts}$ has to hold for all $v\in\Ts$ instead of for all $v\in C^\infty_0(\Omega)$ (and all $u\in\domain(\Bp)$). Of course, this leaves some room for the choice of $\Ts$, although $\Ts$ is typically chosen as the largest of these spaces. Next, for the domain $\domain(\Bb)$ we need $\domain(\Bp) \subseteq \domain(\Bb)$ and it has to impose all initial and boundary conditions that have not already been imposed implicitly during the construction of $\Bb$ using integration by parts. Although $\domain(\Bp)$ does not need to be a Banach space itself, it has to be embedded (densely for \S\ref{ch: gelfand trial}) in some Banach space $\Asb$. By these assumptions, it becomes evident, that every solution $u^*_\circ$ of the classical formulation \eqref{eq:analysis:classical problem} also solves the variational formulation \eqref{eq:operator-equation}.
Regarding our naming convention, we call a variational formulation (of a 2nd order operator) \emph{strong}, \emph{weak} or \emph{ultra-weak} (in a variable), if it was derived by applying none, one or two integrations by parts (with respect to this variable).

\par\smallskip
To get a better hold on this procedure, consider Poisson's equation in \cref{ex: elliptic problem} and let $u\in \domain(\Bp)$ with $\domain(\Bp)$ given in \cref{ex: classical domains}. Multiplying by $v \in C^\infty_0(\Omega)$ and integrating over $\Omega$ gives $\inner{\Bp u,v}_{L^2(\Omega)} = \inner{-\Delta_x u,v}_{L^2(\Omega)} $ while further applying one or two integrations by parts (recalling that $u_{|\partial\Omega}=v_{|\partial\Omega}=0$) yield $ \inner{\Bp u,v}_{\Tsp}
    = \inner{\nabla_x u,\nabla_x v}_{L^2(\Omega)}
    = \inner{u,-\Delta_x v}_{L^2(\Omega)}$ for all $u\in\domain(\Bp)$ and $v\in C_0^\infty(\Omega)$.
This gives rise to the following strong, weak and ultra-weak operator
\begin{equation}\label{eq: strong, weak, ultra weak operator in space}
\begin{alignedat}{5}
    &-\Delta_x^{\mathrm{st}}&&: H^\Delta(\Omega) &&\ra L^2(\Omega),\quad && \inner{-\Delta_x^{\mathrm{st}}u,v}_{L^2(\Omega)} &&\deq \inner{-\Delta_x u,v}_{L^2(\Omega)},\\
    &-\Delta_x^{\mathrm{we}}&&: H^1(\Omega) &&\ra [H^1(\Omega)]',\quad && \pairingb{-\Delta_x^{\mathrm{we}}u,v}{H^1(\Omega)} &&\deq \inner{\nabla_x u,\nabla_x v}_{L^2(\Omega)},\\
    &-\Delta_x^{\mathrm{uw}}&&: L^2(\Omega) &&\ra [H^{\Delta}(\Omega)]',\quad && \pairingb{-\Delta_x^{\mathrm{uw}}u,v}{H^\Delta(\Omega)} &&\deq \inner{u,-\Delta_x v}_{L^2(\Omega)},
\end{alignedat}
\end{equation}
representing the starting point for a variational operator $\Bb^{\mathrm{st}}$, $\Bb^{\mathrm{we}}$ and $\Bb^{\mathrm{uw}}$, respectively. Now, we have to select a suitable domain $\domain(\Bb^{\mathrm{st}}),\domain(\Bb^{\mathrm{we}})$ and $\domain(\Bb^{\mathrm{uw}})$ as well as a suitable codomain $[\Ts^{\mathrm{st}}]',[\Ts^{\mathrm{we}}]'$ and $[\Ts^{\mathrm{uw}}]'$.
Choosing the largest possible test space $\Ts$ while still ensuring $\inner{\Bp u,v}_{\Tsp} = \pairing{\Bb u,v}{\Ts}$ for all $v\in\Ts$, we end up with $\Ts^{\mathrm{st}} \deq L^2(\Omega)$, $\Ts^{\mathrm{we}} \deq H^1_0(\Omega)$ and $\Ts^{\mathrm{uw}} \deq H^\Delta(\Omega)\cap H^1_0(\Omega)$, which are -- of course -- not the only possible options, but the most natural choices. Thereby, we note, that $\Ts^{\mathrm{st}},\Ts^{\mathrm{we}},\Ts^{\mathrm{uw}} \hookrightarrow_d L^2(\Omega)$. Now, regarding the domains, we first note, that $\domain(\Bp) \subseteq \domain(\Bb^{\mathrm{st}}),\domain(\Bb^{\mathrm{we}}),\domain(\Bb^{\mathrm{uw}})$ provides a lower bound. Second, regarding the boundary conditions, we note that in context of $\Bb^{\mathrm{uw}}$, the boundary condition is already implicitly imposed by the construction of the operator in form of a vanishing boundary integral, while for $\Bb^{\mathrm{st}}$ and $\Bb^{\mathrm{we}}$, the boundary condition needs to be imposed by the domain, i.e.,  $\domain(\Bb^{\mathrm{st}}),\domain(\Bb^{\mathrm{we}}) \subseteq H^1_0(\Omega)$. Hence, as an upper bound, we get $\domain(\Bb^{\mathrm{st}}) \subseteq \Asb^{\mathrm{st}} \deq H^\Delta(\Omega)\cap H^1_0(\Omega)$ as well as $\domain(\Bb^{\mathrm{we}})\subseteq \Asb^{\mathrm{we}} \deq H^1_0(\Omega)$ and $\domain(\Bb^{\mathrm{uw}}) \subseteq \Asb^{\mathrm{uw}} \deq L^2(\Omega)$. Thereby, we note, that each domain is a dense subspace of the Banach space $\Asb^{\mathrm{st}}$, $\Asb^{\mathrm{we}}$ and $\Asb^{\mathrm{uw}}$, respectively, as $\domain(\Bp)$ is already dense in each of them. In a similar fashion, we can derive several variational formulations for each example given in \S\ref{ch: introduction examples}, for which the following formulations will be considered in the scope of this paper.
\begin{remark}
    We stress, that $H^\Delta(\Omega) \cap H^1_0(\Omega) = H^2(\Omega)\cap H^1_0(\Omega)$ if $\Omega$ is quasi-convex \cite[Def. 8.9, Lem. 8.11]{GesztesyMitrea2011}, in particular if $\Omega$ is convex or has a smooth boundary. In this case $(\norm{\nabla\cdot}_{L^2(\Omega)}^2 + \norm{\Delta\cdot}_{L^2(\Omega)}^2)^{\frac{1}{2}}$ is a norm on $H^\Delta(\Omega)\cap H^1_0(\Omega)$ equivalent to the standard $H^2(\Omega)$ norm, e.g. \cite[Thm. 8.12]{GilbaT2001}.
\end{remark}

\begin{example}[Poisson equation]\label{ex: formulations for elliptic}
    For the Poisson equation in \cref{ex: elliptic problem}, set $\Tsp \deq L^2(\Omega)$, let $\domain(\Bp)$ as given in \cref{ex: classical domains} and consider the formulations
    \begin{compactenum}[(i)]
        \item\label{it:formulations for elliptic:strong} \emph{strong}: $\Ts\deq L^2(\Omega)$ and $\Asb \deq H^\Delta(\Omega)\cap H^1_0(\Omega)$ with $\Bb \deq -\Delta_x^{\mathrm{st}}\vert_{\domain(\Bb)} : \domain(\Bb) \subseteq\Asb\ra \Ts'$ for any domain $\domain(\Bb)$ such that $\domain(\Bp) \subseteq \domain(\Bb)\subseteq \Asb$;
        \item\label{it:formulations for elliptic:weak} \emph{weak}: $\Ts\deq H^1_0(\Omega)$ and $\Asb \deq H^1_0(\Omega)$ with $\Bb \deq -\Delta_x^{\mathrm{we}}\vert_{\domain(\Bb)} : \domain(\Bb) \subseteq\Asb\ra \Ts'$ for any domain $\domain(\Bb)$ such that $\domain(\Bp) \subseteq \domain(\Bb)\subseteq \Asb$;
        \item\label{it:formulations for elliptic:ultra-weak} \emph{ultra-weak}: $\Ts\deq H^\Delta(\Omega)\cap H^1_0(\Omega)$ and $\Asb \deq L^2(\Omega)$ with $\Bb \deq -\Delta_x^{\mathrm{uw}}\vert_{\domain(\Bb)} : \domain(\Bb) \subseteq\Asb\ra \Ts'$ for any domain $\domain(\Bb)$ such that $\domain(\Bp) \subseteq \domain(\Bb)\subseteq \Asb$.
    \end{compactenum}
    Thereby, we equip $H^\Delta(\Omega)\cap H^1_0(\Omega)$ with $(\norm{\nabla\cdot}_{L^2(\Omega)}^2 + \norm{\Delta\cdot}_{L^2(\Omega)}^2)^{\frac{1}{2}}$ as norm.
    Further holds in all three cases, that $\domain(\Bb) \subseteq_d\Asb$ as $\domain(\Bp)$ is already dense in $\Asb$. 
\end{example}

\begin{example}[Heat equation -- strong in time]\label{ex: strong formulation heat}
    For the heat equation given in \cref{ex: heat equation}, set $\Tsp \deq L^2(Q)$ and define the Hilbert spaces $\Ts \deq L^2(I;H^1_0(\Omega))$ and $\Asb \deq L^2(I;H^1_0(\Omega))\cap H^1_{0,}(I;H^{-1}(\Omega))$
    with their corresponding norms $\norm{\cdot}_{\Ts} \deq \norm{\nabla_x\cdot}_{L^2(Q)}$ and $\norm{\cdot}_{\Asb}^2 \deq \norm{\cdot}_{\Ts}^2 + \norm{\partial_t \cdot}_{\Ts'}^2$, respectively.
    We consider the variational operator $\Bb :\Asb \ra \Ts'$ arising from $\Bp$ in \cref{ex: heat equation} by one integration by parts in space, for all $u\in\Asb$ and $v\in\Ts$ given by $\pairing{\Bb u,v}{\Ts} \deq \inner{\partial_t u,v}_{L^2(Q)} +\Pairing{-\Delta_x^{\mathrm{we}} u,v}{L^2(I;H^{-1}(\Omega))}{L^2(I;H^1_0(\Omega))}$, see also \cite{DautrayLions,SchwabStevenson}.
\end{example}

\begin{example}[Heat equation -- weak in time]\label{ex: weak formulation heat}
    For the heat equation given in \cref{ex: heat equation}, set $\Tsp \deq L^2(Q)$ and define the Hilbert spaces $\Asb \deq L^2(I;H^1_0(\Omega))$ and $\Ts \deq L^2(I;H^1_0(\Omega))\cap H^1_{,0}(I;H^{-1}(\Omega))$
    with their corresponding norms $\norm{\cdot}_{\Asb} \deq \norm{\nabla_x\cdot}_{L^2(Q)}$ and $\norm{\cdot}_{\Ts}^2 \deq \norm{\cdot}_{\Asb}^2 + \norm{\partial_t \cdot}_{\Asb'}^2$, respectively.
    We consider the variational operator $\Bb :\Asb \ra \Ts'$ arising from $\Bp$ in \cref{ex: heat equation} by one integration by parts in time and one in space, for all $u\in\Asb$ and $v\in\Ts$ given by $\pairing{\Bb u,v}{\Ts} \deq -\inner{u,\partial_t v}_{L^2(Q)} +\Pairing{-\Delta_x^{\mathrm{we}} u,v}{L^2(I;H^{-1}(\Omega))}{L^2(I;H^1_0(\Omega))}$, see also \cite{BabuskaJanik,CheginiStevenson}.
\end{example}

\begin{example}[Wave equation -- strong in time]\label{ex: strong formulation wave}
    For the first order in time reformulation of the wave equation given in \cref{ex: FO wave equation}, set $\Tsp \deq L^2(Q;\R^2)$ and define the Hilbert spaces $\Asb \deq H^1_{0,}(I;L^2(\Omega)\times H^{-1}(\Omega)) \cap L^2(I;H^1_0(\Omega) \times L^2(\Omega))$ and $\Ts \deq L^2(I;L^2(\Omega)\times H^1_0(\Omega))$ with their corresponding norms $\norm{\cdot}_{\Ts} \deq \norm{\cdot}_{L^2(I;L^2(\Omega)\times H^1_0(\Omega))}$ and  $\norm{\cdot}_{\Asb}^2 \deq \norm{\partial_t\cdot}_{L^2(I;L^2(\Omega)\times H^{-1}(\Omega))}^2 + \norm{\cdot}_{L^2(I;H^1_0(\Omega) \times L^2(\Omega))}^2$,     respectively.
    Introducing the operator $A \deq \begin{psmallmatrix} 0 & -\Id\\ -\Delta_x^{\mathrm{we}} & 0 \end{psmallmatrix}: H^1_0(\Omega) \times L^2(\Omega) \ra L^2(\Omega)\times H^{-1}(\Omega)$,
    we consider the variational operator $\Bb :\Asb \ra \Ts'$ araising from $\Bp$ in \cref{ex: FO wave equation} by one integration by parts in space, for all $\vec{u}\in\Asb$ and $\vec{v}\in\Ts$ given by
    \begin{align*}
        \pairing{\Bb \vec{u},\vec{v}}{\Ts} &\deq \pairing{\partial_t \vec{u} + A \vec{u},\vec{v}}{\Ts}\\
        &= \inner{\partial_t u_1 - u_2,v_1}_{L^2(Q)}
        + \Pairing{\partial_t u_2-\Delta_x^{\mathrm{we}} u_1,v_2}{L^2(I;H^{-1}(\Omega))}{L^2(I;H^1_0(\Omega))}.
    \end{align*}
\end{example}

\begin{example}[Wave equation -- weak in time]\label{ex: weak formulation wave}
    For the wave equation given in \cref{ex: wave equation}, set $\Tsp \deq L^2(Q)$ and define the Hilbert spaces $\Asb \deq H^1_{0,}(I;H^1_0(\Omega))$ and $\Ts \deq H^1_{,0}(I; H^1_0(\Omega))$ with norms $\norm{\cdot}_{\Asb}^2 \equiv \norm{\cdot}_{\Ts}^2 \deq \norm{\partial_t \cdot}_{L^2(Q)}^2 + \norm{\nabla_x \cdot}_{L^2(Q)}^2$. 
    We consider the variational operator $\Bb :\Asb \ra \Ts'$ arising from $\Bp$ in \cref{ex: wave equation} by one integration by parts in time and one in space, for all $u\in\Asb$ and $v\in\Ts$ given by
    \begin{equation}\label{eq:applications:sym-wave:VF}
        \pairing{\Bb u,v}{\Ts} \deq -\inner{\partial_t u,\partial_t v}_{L^2(Q)} + \Pairing{-\Delta_x^{\mathrm{we}} u,v}{L^2(I;H^{-1}(\Omega))}{L^2(I;H^1_0(\Omega))}.
    \end{equation}
\end{example}

\begin{example}[Wave equation -- ultra-weak in time]\label{ex: ultra-weak formulation wave}
    For the first order in time reformulation of the wave equation given in \cref{ex: FO wave equation}, set $\Tsp \deq L^2(Q;\R^2)$ and define the Hilbert spaces $\Asb \deq L^2(I;H^1_0(\Omega)\times L^2(\Omega))$ and $\Ts \deq H^1_{,0}(I; H^{-1}(\Omega)\times L^2(\Omega)) \cap L^2(I;L^2(\Omega)\times H^1_0(\Omega))$ with their corresponding norms $\norm{\cdot}_{\Asb} \deq \norm{\cdot}_{L^2(I;H^1_0(\Omega)\times L^2(\Omega))}$ and $\norm{\cdot}_{\Ts}^2 \deq \norm{\partial_t\cdot}_{L^2(I;H^{-1}(\Omega)\times L^2(\Omega))}^2 + \norm{\cdot}_{L^2(I;L^2(\Omega)\times H^1_0(\Omega))}^2$, respectively. Using, that $-\Delta_x^{\mathrm{we}}$ and $\Id$ are self-adjoint, the adjoint of $A$ given in \cref{ex: strong formulation wave} reads $A^* = \begin{psmallmatrix} 0 & -\Delta_x^{\mathrm{we}}\\ -\Id & 0 \end{psmallmatrix}: L^2(\Omega) \times H^1_0(\Omega) \ra H^{-1}(\Omega) \times L^2(\Omega)$
    and we consider the variational operator $\Bb :\Asb \ra \Ts'$ araising from $\Bp$ in \cref{ex: FO wave equation} by one integration by parts in time (due to the first order reformulation, this effectively corresponds to two integration by parts in time, thus the name \emph{ultra-weak in time}) and one in space, for all $\vec{u}\in\Asb$ and $\vec{v}\in\Ts$ given by
    \begin{align*}
        \pairing{\Bb \vec{u},\vec{v}}{\Ts} &\deq \Pairing{\vec{u},-\partial_t\vec{v} + A^*\vec{v}}{\Asb}{\Asb'}\\
        &\kern-5pt= \Pairing{u_1,-\partial_t v_1 -\Delta_x^{\mathrm{we}} v_2}{L^2(I;H^1_0(\Omega))}{L^2(I;H^{-1}(\Omega))} + \inner{u_2,-\partial_t v_2 - v_1}_{L^2(Q)}.
    \end{align*}
\end{example}

\begin{remark}\label{rm: general variational spatial operators}
    The above variational formulations involving $-\Delta_x^{\mathrm{we}}$ can be extended directly to an arbitrary bounded and coercive (time variant) operator $A(t): H^1_0(\Omega) \ra H^{-1}(\Omega)$ instead of $-\Delta_x^{\mathrm{we}}$. In particular, for the elliptic operator $A_\circ$ defined in \cref{rm:general elliptic operaotrs possible} (omitting the time variance for brevity), we end up with the strong, weak and ultra weak operator
    \begin{align*}
        \inner{A^{\mathrm{st}} u, v}_{L^2(\Omega)} &\deq \inner{-\nabla_x \cdot\parens{\uline{A} \nabla_x u} + \uline{b} \cdot \nabla_x u + \uline{c} u, v}_{L^2(\Omega)},\\
        \pairingb{A^{\mathrm{we}} u, v}{H^1(\Omega)}  &\deq \inner{\uline{A} \nabla_x u,\nabla_x v}_{L^2(\Omega)} + \inner{\uline{b} \cdot \nabla_x u, v}_{L^2(\Omega)} + \inner{\uline{c} u,v}_{L^2(\Omega)},\\
        \pairingb{A^{\mathrm{uw}} u, v}{H^2(\Omega)}  &\deq \inner{u,-\nabla_x \cdot\parens{\uline{A}^\top \nabla_x v} -\uline{b} \cdot \nabla_x  v + \parens{\uline{c} -\nabla_x\cdot\uline{b}} v}_{L^2(\Omega)},
    \end{align*}
    respectively, replacing \eqref{eq: strong, weak, ultra weak operator in space} -- assuming that $\uline{A}$, $\uline{b}$ and $\uline{c}$ are sufficiently smooth.
\end{remark}
These examples already show that there is a huge variety regarding the choice of the test space $\Ts$ and the variational formulation \eqref{eq:operator-equation}, recalling that well-posedness is not required as \eqref{eq:operator-equation} only serves as a starting point.

\subsection{Completion and extension}\label{subsec: compl_ext}
Our goal is to \emph{construct a well-posed and optimally stable extension} $\B : \As \ra \Ts'$ of \eqref{eq:operator-equation}. Thereby, an extension of \eqref{eq:operator-equation} means that $\domain(\Bb) \subseteq_d \As$ and $\B\vert_{\domain(\Bb)} = \Bb$. To this end, it turns out that the following two properties are crucial:
\begin{enumerate}[label=(B\arabic*),font=\bfseries]
    \item\label{it: injective} $\Bb$ is injective on $\domain(\Bb)$;
    \item\label{it: range dense} the range $\range(\Bb)$ of $\Bb$ is dense in $\Ts'$ (weak surjectivity of $\Bb$).
\end{enumerate}
We first note a sufficient condition for \crefl{it: injective} and \crefl{it: range dense}, namely that \eqref{eq:operator-equation} is well-posed and stable on a dense subspace $\Tse \subseteq_d \Ts'$ of right-hand sides.
\begin{lemma}\label{lm: solution of equation sufficient for B1 B2}
    Let $\Tse \subseteq_d \Ts'$ be a normed subspace and $C <\infty$. If \eqref{eq:operator-equation} admits a solution $u^*\in\domain(\Bb)$ for each $f\in\Tse$ satisfying the stability estimate $\norm{u^*}_\Asb \leq C \norm{f}_\Tse$, then \crefl{it: injective} and \crefl{it: range dense} are valid.
\end{lemma}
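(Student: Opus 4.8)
The plan is to establish \crefl{it: range dense} and \crefl{it: injective} separately, in each case exploiting that $\Tse$ is a linear subspace of $\Ts'$ and hence contains $0$. I would begin with the density of the range, which is the softer of the two. For every $f\in\Tse$ the hypothesis supplies some $u^*\in\domain(\Bb)$ with $\Bb u^*=f$, so $f\in\range(\Bb)$ and therefore $\Tse\subseteq\range(\Bb)\subseteq\Ts'$. Since $\Tse\subseteq_d\Ts'$, passing to closures in $\Ts'$ gives $\Ts'=\overbar{\Tse}\subseteq\overbar{\range(\Bb)}$, so $\range(\Bb)$ is dense in $\Ts'$. No estimate is needed here, only solvability together with the assumed density of $\Tse$.

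For \crefl{it: injective} I would feed the homogeneous datum into the stability estimate. Let $u\in\domain(\Bb)$ satisfy $\Bb u=0$. Because $\Tse$ is a normed subspace, $0\in\Tse$ with $\norm{0}_\Tse=0$, so $u$ solves \eqref{eq:operator-equation} for the admissible right-hand side $f=0$; the stability estimate then forces $\norm{u}_\Asb\leq C\norm{0}_\Tse=0$, whence $u=0$. Thus $\ker\Bb=\set{0}$ and $\Bb$ is injective on $\domain(\Bb)$.

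The one point that genuinely requires care --- and the step I would regard as the crux --- is the logical reading of the hypothesis in the injectivity argument: the stability estimate must be understood as bounding a solution $u^*$ of the equation in terms of its datum $f$, so that it applies in particular to the element $u$ of $\ker\Bb$ viewed as a solution for $f=0$. If the estimate asserted only the existence of \emph{some} bounded solution, a nontrivial kernel could not be excluded, since adding a kernel element to a bounded solution again yields a solution. Under the natural reading the argument closes without further work; note that the density of $\Tse$ enters only in \crefl{it: range dense}, whereas injectivity relies solely on $0\in\Tse$ and the linearity of $\Bb$.
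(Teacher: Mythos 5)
Your proof is correct and takes essentially the same route as the paper: \crefl{it: range dense} follows from $\Tse\subseteq\range(\Bb)\subseteq\Ts'$ together with the density of $\Tse$, and \crefl{it: injective} follows by feeding the datum $f=0\in\Tse$ into the stability estimate. Your closing remark on how the hypothesis must be read is apt --- the paper's own proof likewise tacitly uses that the estimate applies to \emph{any} solution of the homogeneous problem rather than merely to \emph{some} solution, so making that reading explicit is a small clarification, not a deviation.
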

\begin{proof}
    Consider the operator equation \eqref{eq:operator-equation} for $f=0$. Since $\Tse$ is a subspace, we have $0\in \Tse$, i.e., there exists some $u^*\in \dom(\Bb)$ such that $\Bb u^* = 0$ in $\Ts'$ and $\norm{u^*}_\Asb \leq C \norm{f}_\Tse = 0$ by assumption. Thus, if $\Bb u^* = 0$ then $u^* = 0$, i.e., the solution of the homogeneous problem is unique and hence $\kernel(\Bb) = \set{0}$, i.e., $\Bb$ is injective on $\domain(\Bb)$, i,e., \crefl{it: injective}.
    Since $\Tse$ is dense in $\Ts'$ and $\Tse \subseteq \range(\Bb) \subseteq \Ts'$, we conclude that $\range(\Bb)$ is dense in $\Ts'$, i.e., \crefl{it: range dense}.
\end{proof}
\begin{remark}\label{rm: solution of equation sufficient for B1 B2}
    Using that $(\Ts,\Tsp,\Ts')$ forms a Gelfand triple together with \cref{rm: continuous embedding}, possible choices for $\Tse$ include in particular $\Tse\subseteq_d \Tsp$ and $\Tse\subseteq_d \Ts$. In fact, this is the main reason, why the Gelfand triple $(\Ts,\Tsp,\Ts')$ is important as it is often sufficient to consider the operator equation for smooth right-hand sides, where existence results are typically easier to derive.
\end{remark}
If \crefl{it: injective} holds, then it is easy to see, that $\norm{\cdot}_{\domain(\Bb)} \deq \norm{\Bb\cdot}_{\Ts'}$ defines a norm on $\domain(\Bb)$. Thus, there exists a (up to isometric isomorphisms) uniquely determined completion $(\As,\norm{\cdot}_{\As})$ \label{inline: def X} of $(\domain(\Bb),\norm{\cdot}_{\domain(\Bb)})$, e.g. \cite[Theorem 2.32 \& Corollary 2.60]{EinsiW2017}.
Furthermore, there exists a uniquely determined continuous extension $\B\in\cL(\As,\Ts')$ of $\Bb$ from $\domain(\Bb)$ to $\As$, e.g.\ \cite[Proposition 2.59]{EinsiW2017}.
By definition, $\As$ is a Banach space, $\domain(\Bb) \subseteq_d \As$ and $\B\vert_{\domain(\Bb)} = \Bb$.
Furthermore, it is easy to see\footnote{By the representations of $\norm{\cdot}_{\As}$ and $\B$ given in the proofs of \cite[Theorem 2.32 \& Proposition 2.59]{EinsiW2017}, respectively, it holds for all $\bar{u}\in \As$, that $\norm{\bar{u}}_{\As} = \lim_{n\ra\infty} \norm{\Bb u_n}_{\Ts'} = \norm{\lim_{n\ra\infty} \Bb u_n}_{\Ts'} = \norm{\B \bar{u}}_{\Ts'}$, with $(u_n)_{n\in\N}\subset \domain(\Bb)$ denoting any sequence with $\lim_{n\ra\infty} u_n = \bar{u}$ in $\As$ (note that $\domain(\Bb) \subseteq_d \As$).}, that
\begin{equation}\label{eq: norm X = Bu}
    \norm{\bar{u}}_{\As} = \norm{\B \bar{u}}_{\Ts'},\, \bar{u}\in\As
    \quad\text{and}\quad
    \norm{u}_{\As}  = \norm{u}_{\domain(\Bb)} = \norm{\Bb u}_{\Ts'},\, u\in\domain(\Bb).
\end{equation}
Now, for $f\in\Ts'$, we seek the solution $\bar{u}^*\in \As$ of the extended operator equation
\begin{equation}\label{eq:well-posed operator-equation}
    \B \bar{u}^*=f\text{ in $\Ts'$},
    \quad\text{i.e.,}\quad
    \pairing{\B \bar{u}^*, v}{\Ts} = \pairing{f, v}{\Ts}\quad \forall v\in\Ts.
\end{equation}
By $\B\vert_{\domain(\Bb)} = \Bb$ and $\domain(\Bb)\subseteq \As$, it is easy to see, that every solution $u^*$ of \eqref{eq:operator-equation} also solves \eqref{eq:well-posed operator-equation}, but not the other way around. Hence, the extended problem \eqref{eq:well-posed operator-equation} is a weaker form of \eqref{eq:operator-equation}.
\begin{remark}\label{rm: notes on extended definition}
    \begin{compactenum}[(i)]
        \item If $\Bb$ is bounded (i.e.,  $\domain(\Bb) = \Asb$ and $\Bb\in\cL(\Asb,\Ts')$), it holds
        \begin{equation}\label{eq: embedding Asb in Asb if bounded}
            \Asb \hookrightarrow_d \As,\qquad\text{with}\qquad
            \norm{u}_{\As} \leq \norm{\Bb}_{\cL(\Asb,\Ts')} \norm{u}_{\Asb}\quad\forall u\in\Asb.
        \end{equation}
        \item The space $\As$ consists of the limits of all Cauchy sequences in $(\domain(\Bb),\norm{\cdot}_{\domain(\Bb)})$. The extension $\B$ reads $\B \bar{u} = \lim_{n\ra\infty} \Bb u_n$ for the limit  $\bar{u}\deq \lim_{n\to\infty}u_n$ of a Cauchy sequence $(u_n)_{n\in\N} \subset \domain(\Bb)$. Note, that $(u_n)_{n\in\N}$ is a Cauchy sequence in $\domain(\Bb)$ if and only if $(\Bb u_n)_{n\in\N}$ is a Cauchy sequence in $\Ts'$, i.e., $\B$ is well-defined, since $\Ts'$ is complete.
        \item The extended problem \eqref{eq:well-posed operator-equation} is independent of the chosen variational formulation \eqref{eq:operator-equation} for the classical problem \eqref{eq:analysis:classical problem}, as long as the test space $\Ts$ is fixed. In fact, the completion and the continuous extension are both unique. Thus, for any other operator $\Bb^\star : \domain(\Bb^\star) \subseteq \Asb^\star \ra \Ts'$ satisfying \crefl{it: injective} with $\domain(\Bb^\star) \subseteq_d \As$ and $\B \vert_{\domain(\Bb^\star)} = \Bb^\star$ we have $\overbar{U^\star} \equiv \As$ and $\overbar{B^\star} \equiv \B$, with $\overbar{U^\star}$ and $\overbar{B^\star}$ denoting the completion and continuous extension of $\domain(\Bb^\star)$ and $\Bb^\star$, respectively. See also \cref{rm: changing the elliptiv no impact} below.
    \end{compactenum}
\end{remark}
We now state our main result that the extended operator $\B$ is an isometric isomorphism, i.e., the operator equation \eqref{eq:well-posed operator-equation} is well-posed and optimally stable.
\begin{theorem}\label{tm:abstract setting well-posed}
    Let \crefl{it: injective,it: range dense} hold. The operator $\B$ from \eqref{eq:well-posed operator-equation} is
    \begin{compactenum}[(a)]
        \item an isomorphism, i.e., $\B\in \Lis(\As,\Ts')$ (well-posedness) and
        \item isometric, i.e., $\norm{\B}_{\cL(\As,\Ts')} = \norm{\B^{-1}}_{\cL(\Ts',\As)} = 1$
        (optimal stability).
    \end{compactenum}
\end{theorem}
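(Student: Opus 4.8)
The plan is to run everything off the norm identity \eqref{eq: norm X = Bu}, namely $\norm{\bar u}_{\As} = \norm{\B \bar u}_{\Ts'}$ for all $\bar u\in\As$, which already exhibits $\B$ as a metric isometry. From this identity both the injectivity required in~(a) and the operator-norm statement in~(b) are immediate. Indeed, for $\bar u\in\As\setminus\set{0}$ one has $\norm{\B\bar u}_{\Ts'}/\norm{\bar u}_{\As} = 1$, so $\norm{\B}_{\cL(\As,\Ts')} = 1$; and $\B\bar u = 0$ forces $\norm{\bar u}_{\As} = \norm{\B\bar u}_{\Ts'} = 0$, hence $\bar u = 0$, i.e.\ $\B$ is injective on all of $\As$. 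Note that reflexivity of $\Ts$ plays no role here; only completeness of $\As$ and the identity \eqref{eq: norm X = Bu} will be used.

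The crux of part~(a) is surjectivity, which I would establish by combining a closed-range argument with the density assumption \crefl{it: range dense}. For closedness of $\range(\B)$: if $(\B\bar u_n)_{n\in\N}$ is Cauchy in $\Ts'$, then \eqref{eq: norm X = Bu} gives $\norm{\bar u_n - \bar u_m}_{\As} = \norm{\B\bar u_n - \B\bar u_m}_{\Ts'}$, so $(\bar u_n)_{n\in\N}$ is Cauchy in the Banach space $\As$ and converges to some $\bar u\in\As$; continuity of $\B$ then yields $\B\bar u_n \to \B\bar u \in \range(\B)$, so $\range(\B)$ is closed. On the other hand, since $\domain(\Bb)\subseteq\As$ and $\B\vert_{\domain(\Bb)} = \Bb$, we have $\range(\Bb)\subseteq\range(\B)$, whence \crefl{it: range dense} shows $\range(\B)$ is dense in $\Ts'$. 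A subspace that is simultaneously closed and dense must equal $\Ts'$, so $\B$ is surjective, and together with injectivity it is a bijection.

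It remains to produce the bounded inverse and the second isometry statement in~(b). For $f\in\Ts'$, write $f = \B\bar u$ with the unique preimage $\bar u = \B^{-1}f\in\As$; then \eqref{eq: norm X = Bu} reads $\norm{\B^{-1}f}_{\As} = \norm{\bar u}_{\As} = \norm{\B\bar u}_{\Ts'} = \norm{f}_{\Ts'}$, so $\B^{-1}\in\cL(\Ts',\As)$ with $\norm{\B^{-1}}_{\cL(\Ts',\As)} = 1$, giving $\B\in\Lis(\As,\Ts')$ and finishing both~(a) and~(b). The only genuinely non-routine step is the closed-range argument, i.e.\ upgrading the metric isometry \eqref{eq: norm X = Bu} to topological closedness of $\range(\B)$ by invoking completeness of the completion $\As$; once that is in place, density from \crefl{it: range dense} closes the gap to surjectivity and everything else is a direct reading of the norm identity.
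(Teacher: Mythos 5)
Your proposal is correct and follows essentially the same route as the paper: injectivity and both operator-norm identities are read directly off the isometry \eqref{eq: norm X = Bu}, and surjectivity comes from the density of $\range(\Bb)$ in $\Ts'$ (\crefl{it: range dense}) combined with completeness of $\As$ and the Cauchy-transfer property of the isometry. The only difference is organizational: the paper, for each fixed $f\in\Ts'$, pulls an approximating sequence from $\domain(\Bb)$ back through the isometry and passes to the limit, whereas you package the identical mechanism as \enquote{$\range(\B)$ is closed, and a closed dense subspace equals the whole space.}
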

\begin{proof}
    First, note that $\As$ and $\B$ are well-defined thanks to \crefl{it: injective}.
    By \eqref{eq: norm X = Bu} we have that the operator $\B: \As \to \range(\B)\subseteq \Ts'$ is an isometry, thus injective and $\norm{\B}_{\cL(\As,\Ts')} =1$. It remains to shown, that (i) $\B$ is surjective, i.e.,  $\range(\B)=\Ts'$, and (ii) $\norm{\B^{-1}}_{\cL(\Ts',\As)} = 1.$ To show the surjectivity, let $f\in\Ts'$ be arbitrary but fixed.
    By \crefl{it: range dense}, the range of $\Bb$ is dense in $\Ts'$. Hence, there exists a sequence $\sequence{u_n}_{n\in\N} \subset \domain(\Bb)$, such that $\lim_{n\ra\infty} \norm{f-\Bb u_n}_{\Ts'} = 0$.
    Thus, for all $n,m\in\N$
    \begin{equation*}
        \norm{u_n-u_m}_{\As}
        \overset{\eqref{eq: norm X = Bu}}{=}
        \norm{u_n-u_m}_{\domain(\Bb)}
        = \norm{\Bb(u_n-u_m)}_{\Ts'}
        \leq \norm{\Bb u_n -f }_{\Ts'}+\norm{\Bb u_m-f}_{\Ts'}
        \ \to\ 0
    \end{equation*}
    as $n,m\to\infty$, i.e., $\sequence{u_n}_{n\in\N}$ is a Cauchy sequence in $\As$.
    Hence, by the completeness of $\As$, there exists $\bar{u}\in \As$ with $\lim\limits_{n\ra\infty}\norm{\bar{u}-u_n}_{\As} = 0$, and it holds
    \begin{equation*}
        \norm{f-\B \bar{u}}_{\Ts'}
        \leq \norm{f - \Bb u_n}_{\Ts'}  + \norm{\Bb u_n - \B \bar{u}}_{\Ts'}
        =\norm{f - \Bb u_n}_{\Ts'} + \norm{u_n - \bar{u}}_{\As} \stackrel{n\to\infty}{\xrightarrow{\hspace{8mm}}}0,
    \end{equation*}
    i.e., $f=\B \bar{u} \in\range(\B)$ so that $\Ts'\subseteq\range(\B) \subseteq \Ts'$, where the second inclusion follows from \crefl{it: range dense}. Hence we conclude $\Ts' = \range(\B)$, i.e., (i). In order to show (ii), we have
    \begin{equation*}
        \norm{\B^{-1}}_{\cL(\Ts',\As)}
        = \sup_{f\in \Ts'\setminus\set{0}} \frac{\norm{\B^{-1}f}_{\As}}{\norm{f}_{\Ts'}}
        = \sup_{f\in \Ts'\setminus\set{0}} \frac{\norm{\B \B^{-1}f}_{\Ts'}}{\norm{f}_{\Ts'}}
        = \sup_{f\in \Ts'\setminus\set{0}} \frac{\norm{f}_{\Ts'}}{\norm{f}_{\Ts'}}
        = 1.
    \end{equation*}
    In particular, the inverse $\B^{-1}:\Ts'\ra\As$ is bounded (even isometric) and thus $\B \in \Lis(\As,\Ts')$, which concludes the proof.
\end{proof}
\begin{corollary}\label{co:abstract setting well-posed}
    Under the assumptions of \cref{tm:abstract setting well-posed}, we have, that
    \begin{compactenum}[(a)]
        \item the trial space $\As$ is reflexive as the test space $\Ts$ is reflexive;
        \item the trial space $\As$ is a Hilbert space w.r.t.\ the inner product $\inner{\cdot,\cdot}_{\As} \deq \inner{\B\cdot,\B\cdot}_{\Ts'}$ if and only if the test space $\Ts$ is a Hilbert space.
    \end{compactenum}
\end{corollary}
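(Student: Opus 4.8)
The plan is to exploit the single powerful fact furnished by \cref{tm:abstract setting well-posed}, namely that $\B\in\Lis(\As,\Ts')$ is an \emph{isometric} isomorphism. Consequently $\As$ and $\Ts'$ are indistinguishable as normed spaces via the relabeling $u\mapsto\B u$, so every isomorphism-invariant or metric property of $\As$ can be read off from $\Ts'$, and from there transferred to $\Ts$ by duality. Both assertions reduce to transporting a structural property across this isometry and across the duality $\Ts\leftrightarrow\Ts'$.

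For part (a), I would invoke that reflexivity is preserved under topological isomorphisms: if $T\in\Lis(X,Y)$ and $Y$ is reflexive, then the second adjoint $T''\in\Lis(X'',Y'')$ is compatible with the canonical embeddings, i.e.\ $J_Y\circ T = T''\circ J_X$, so surjectivity of $J_Y$ forces surjectivity of $J_X$. Since $\Ts$ is reflexive, its dual $\Ts'$ is reflexive as well (a Banach space is reflexive iff its dual is), and applying this to $T=\B$ shows $\As$ is reflexive. This part is essentially immediate.

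For part (b), the key tool is the Jordan--von Neumann theorem: a normed space carries a compatible inner product exactly when its norm satisfies the parallelogram law. I would first record the auxiliary equivalence \enquote{$\Ts$ Hilbert $\iff$ $\Ts'$ Hilbert}: the forward direction is Riesz representation, and the converse follows because the isometric canonical embedding $\Ts\hookrightarrow\Ts''=(\Ts')'$ lands in a Hilbert space (the dual of a Hilbert space is Hilbert), whence the norm on $\Ts$ inherits the parallelogram law. For the \enquote{if} direction of (b), assuming $\Ts$ (hence $\Ts'$) Hilbert, I would verify directly that $\inner{u,v}_{\As}\deq\inner{\B u,\B v}_{\Ts'}$ is a genuine inner product: sesquilinearity is inherited from $\inner{\cdot,\cdot}_{\Ts'}$ through linearity of $\B$, and positive definiteness follows from injectivity of $\B$ together with $\inner{u,u}_{\As}=\norm{\B u}_{\Ts'}^2=\norm{u}_{\As}^2$ by \eqref{eq: norm X = Bu}; as $\As$ is already complete, it is a Hilbert space. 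For the \enquote{only if} direction, I would use that $\B$ is a \emph{surjective} isometry: for all $u,v\in\As$ one has $\norm{u\pm v}_{\As}=\norm{\B u\pm\B v}_{\Ts'}$, and as $(u,v)$ ranges over $\As\times\As$ the pair $(\B u,\B v)$ ranges over all of $\range(\B)=\Ts'$, so the parallelogram identity holds in $\As$ iff it holds in $\Ts'$; thus $\As$ Hilbert implies $\Ts'$ Hilbert, and by the auxiliary equivalence $\Ts$ Hilbert.

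The only genuine subtlety, and the point I would be careful to phrase correctly, is the logical reading of the \enquote{only if} direction: the displayed expression $\inner{\B\cdot,\B\cdot}_{\Ts'}$ already presupposes that $\Ts'$ carries an inner product, so the statement must be understood as \enquote{if the Banach space $(\As,\norm{\cdot}_{\As})$ obtained from the completion is a Hilbert space, then $\Ts$ is Hilbert}, in which case polarization shows the inner product is necessarily the one displayed. Once this is made precise, everything reduces to transporting the parallelogram law across the isometry $\B$ and across the duality, with no substantive obstacle remaining.
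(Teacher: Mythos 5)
Your proposal is correct and takes essentially the same approach as the paper: the paper's entire proof is the single observation that \cref{tm:abstract setting well-posed} gives $\As\cong\Ts'\cong\Ts$, i.e., both properties are transported across the isometric isomorphism $\B$ and the duality between $\Ts$ and $\Ts'$. Your argument is simply the fully detailed version of that one-liner (invariance of reflexivity under isomorphism for (a), the Jordan--von Neumann/parallelogram-law transfer for (b)), including a careful treatment of the \enquote{only if} direction that the paper leaves implicit.
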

\begin{proof}
    \cref{tm:abstract setting well-posed} yields that $\As\cong\Ts'\cong\Ts$ are isomorphic.
\end{proof}
\begin{remark}\label{rm: connection bilinear and operator}
    \begin{compactenum}[(i)]
        \item The identity $\norm{\bar{u}^*}_{\As} = \norm{f}_{\Ts'}$ is of particular interest in numerical applications as it leads to the error-residual identity $\norm{\bar{u} - \bar{u}^*}_{\As} = \norm{\B \bar{u} - f}_{\Ts'}$ for all $\bar{u}\in\As$. This will be explored in Part II.
        \item The operator $\Bb$ and hence also $\B$ are often defined in terms of a bilinear ($\F=\R$) or sesquilinear ($\F=\C$) form $\bb :\domain(\Bb) \times\Ts \ra \F$ and $\b : \As \times \Ts \ra \F$, respectively.
        Then, \cref{tm:abstract setting well-posed} states, that $\b$ is continuous and inf-sup stable, with
       \begin{equation*}
            \left.\begin{array}{l}
                \overbar\gamma \mathclap{\phantom{\ds{\sup_{\bar{u}\in\As\setminus\set{0}}}}}  \\
                \overbar\beta   \mathclap{\phantom{\ds{\sup_{\bar{u}\in\As\setminus\set{0}}}}} \\
                \overbar\beta^* \mathclap{\phantom{\ds{\sup_{\bar{u}\in\As\setminus\set{0}}}}}
            \end{array}\right\}
            \deq
            \left\{\begin{array}{l}
                \adjustlimits\sup_{\bar{u}\in\As\setminus\set{0}} \sup_{v\in\Ts\setminus\set{0}} \\
                \adjustlimits\inf_{\bar{u}\in\As\setminus\set{0}} \sup_{v\in\Ts\setminus\set{0}} \\
                \adjustlimits\inf_{v\in\Ts\setminus\set{0}} \sup_{\bar{u}\in\As\setminus\set{0}}
            \end{array}\right\}
            \frac{\abs{\b(\bar{u},v)}}{\norm{\bar{u}}_{\As}\norm{v}_\Ts}
            = \left\{\begin{array}{l}
                \norm{\B}_{\cL(\As,\Ts')} \mathclap{\phantom{\ds{\sup_{\bar{u}\in\As\setminus\set{0}}}}}
                \\ \norm{\B^{-1}}_{\cL(\Ts',\As)}^{-1} \mathclap{\phantom{\ds{\sup_{\bar{u}\in\As\setminus\set{0}}}}}
                \\ \norm{\B^{-*}}_{\cL(\As',\Ts)}^{-1} \mathclap{\phantom{\ds{\sup_{\bar{u}\in\As\setminus\set{0}}}}}
            \end{array}\right\}
            = 1,
        \end{equation*}
        where $\B^{-*}:=(\B^*)^{-1}$ denotes the inverse of the adjoint of $\B$.
    \end{compactenum}
\end{remark}
Let us add a note regarding the norm $\norm{\cdot}_{\As} = \norm{\B\cdot}_{\Ts'}$ on $\As$. To this end, assume that $\Ts$ is a Hilbert space and denote the Riesz operator \eqref{eq: riesz operator} by $R_{\Ts}: \Ts \ra \Ts'$. Now, for $\overbar{u}\in \As$, defining $v_{\overbar{u}} \deq R_{\Ts}^{-1}\B \overbar{u} \in \Ts$, it holds $\norm{v_{\overbar{u}}}_{\Ts} = \norm{R_{\Ts}^{-1} \B \overbar{u}}_{\Ts} = \norm{\B \overbar{u}}_{\Ts'}$ and 
\begin{equation}\label{eq:analysis:abstract supremizer}
    \norm{\overbar{u}}_{\As}^2 = \norm{\B \overbar{u}}_{\Ts'}^2 = \inner{\B \overbar{u},\B \overbar{u}}_{\Ts'} = \pairing{\B \overbar{u},R_{\Ts}^{-1}\B \overbar{u}}{\Ts} = \pairing{\B \overbar{u},v_{\overbar{u}}}{\Ts}.
\end{equation}
Thus, $v_{\overbar{u}}$ is a supremizer of $\norm{\B \overbar{u}}_{\Ts'}$.
Before we continue, let us summarize the procedure how to derive an optimally stable well-posed formulation:    
\begin{blackbox}
    \begin{enumerate}[nosep,leftmargin=*]
        \item Starting from the classical formulation $\Bp : \domain(\Bp)\to C(\Omega)$, find a Gelfand triple $
        \Ts\hookrightarrow_d\Tsp \cong \Tsp' \hookrightarrow_d\Ts'$ for the test space and a variational form $\Bb u = f$ in $\Ts'$ for $\Bb: \domain(\Bb) \subseteq \Asb \ra \Ts'$, being a (possibly unbounded) linear operator defined on a linear subspace $\domain(\Bb)$ of a Banach space $\Asb$.
        \item Either show \crefl{it: injective} and \crefl{it: range dense} or find a normed subspace $\Tse \subseteq_d \Ts'$ such that $\Bb u = f$ is well-posed and stable on  $\Tse$ (i.e., only for $f\in\Tse$).    
        \item Build the unique completion $(\As,\norm{\cdot}_{\As})$ of $\domain(\Bb)$ w.r.t.\ $\norm{\cdot}_{\domain(\Bb)} \deq \norm{\Bb\cdot}_{\Ts'}$.
        \item Build the unique continuous extension $\B\in\cL(\As,\Ts')$ of $\Bb$ form $\domain(\Bb)$ to $\As$.
        \item[$\Ra$] $\B\in \Lis(\As,\Ts')$ (well-posedness)\\ and $\norm{\B}_{\cL(\As,\Ts')} = \norm{\B^{-1}}_{\cL(\Ts',\As)} = 1$ (optimal stability).
    \end{enumerate}
\end{blackbox}
%

\subsection{Gelfand triple for the trial space}\label{ch: gelfand trial}
The sufficient condition formulated in \cref{lm: solution of equation sufficient for B1 B2} and required for \cref{tm:abstract setting well-posed} relies on $(\Ts,\Tsp,\Ts')$, i.e., a Gelfand triple for the test space. Even more can be said, if also the trial space admits a Gelfand triple structure, which is in fact the case for all \cref{ex: formulations for elliptic,ex: strong formulation heat,ex: weak formulation heat,ex: strong formulation wave,ex: ultra-weak formulation wave,ex: weak formulation wave}, see \cref{ex: trial pivot space} below. To fix the notation, let $\Asp$ be a Hilbert space, such that $(\Asb,\Asp,\Asb')$ forms a Gelfand triple. 
It would be beneficial, if the Gelfand triple structure would carry over from $\Asb$ to $\As$. It turns out that this can be achieved by an  assumption similar to \cref{lm: solution of equation sufficient for B1 B2} ($\Bb$ is well-posed and stable on a dense subspace $\Tse \subseteq_d \Ts'$), but for the adjoint operator $\Bb^*$. In order to define $\Bb^*$, $\Bb$ needs to be densely defined, i.e., $\domain(\Bb)\subseteq_d \Asb$. Thus, we consider the following embeddings
\begin{equation}\label{eq: gelfand As}
    \domain(\Bb)\subseteq_d \Asb\hookrightarrow_d\Asp \cong \Asp' \hookrightarrow \Asb'.
\end{equation}
Then, following e.g.\ \cite[\S 2.6]{Brezi2011}, there exists a unique adjoint operator $\Bb^* : \domain(\Bb^*)\subseteq_d \Ts \ra \Asb'$ of the (possibly unbounded) operator $\Bb: \domain(\Bb)\subseteq_d \Asb \ra \Ts'$, with its domain
\begin{equation}\label{eq: adjoint operator domain definition}
    \domain(\Bb^*) \deq 
    \set[\Big]{v\in \Ts \ : \sup_{u\in\domain(\Bb)\setminus\set{0}} 
    \frac{\abs{\pairing{\Bb u,v}{\Ts}}}{\norm{u}_{\Asb}} < \infty}
\end{equation}
being dense in $\Ts$ (even $\domain(\Bb^*) = \Ts$ if $\Bb$ is bounded), and
\begin{equation}\label{eq: adjoint operator definition}
    \pairing{\Bb^*v,u}{\Asb} = \pairing{\Bb u,v}{\Ts}\qquad\forall u\in \domain(\Bb),\ \forall v\in \domain(\Bb^*).
\end{equation}
\begin{theorem}\label{tm: gelfand carrys}
    Let \crefl{it: injective}, \crefl{it: range dense} and \eqref{eq: gelfand As} hold, $\Ase \subseteq_d \Asp$ be a subset and $C^*<\infty$. If the adjoint problem of \eqref{eq:operator-equation}, namely to find $v^*\in \domain(\Bb^*)$ such that $\Bb^* v^* = g$  in $\Asb'$, admits a solution for each $g\in \Ase$ and the solution satisfies the stability estimate
    $\norm{v^*}_\Ts \leq C^* \norm{g}_\Asp$, then 
    \begin{equation}\label{eq: embedding As in Asp}
        \As \hookrightarrow_d \Asp \cong \Asp' \hookrightarrow_d \As'
        \qquad\text{with}\qquad
        \norm{\bar{u}}_\Asp\leq C^*\norm{\bar{u}}_{\As}\quad\forall \bar{u} \in \As.
    \end{equation}
\end{theorem}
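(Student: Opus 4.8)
The plan is to prove the quantitative bound $\norm{\bar u}_\Asp\leq C^*\norm{\bar u}_{\As}$ first on the dense subspace $\domain(\Bb)$ by exploiting the adjoint solvability, then to extend it to $\As$ by completion, and finally to deduce the full triple in \eqref{eq: embedding As in Asp} from \cref{rm: gelfand} and the reflexivity of $\As$. The single computational device is an identity expressing the $\Asp$-inner product through the test-space pairing.

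Concretely, fix $u\in\domain(\Bb)$ and $g\in\Ase$, and pick $v^*\in\domain(\Bb^*)$ with $\Bb^*v^*=g$ in $\Asb'$ and $\norm{v^*}_\Ts\leq C^*\norm{g}_\Asp$. Viewing $g\in\Ase\subseteq\Asp\cong\Asp'\hookrightarrow\Asb'$, the Gelfand pairing identity \eqref{eq: gelfand pairing = product} for $(\Asb,\Asp,\Asb')$ gives $\pairing{g,u}{\Asb}=\inner{g,u}_\Asp$, and the definition of the adjoint \eqref{eq: adjoint operator definition} gives $\pairing{g,u}{\Asb}=\pairing{\Bb^*v^*,u}{\Asb}=\pairing{\Bb u,v^*}{\Ts}$. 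Hence $\inner{g,u}_\Asp=\pairing{\Bb u,v^*}{\Ts}$, and with \eqref{eq: norm X = Bu},
\[ \abs{\inner{g,u}_\Asp}\leq\norm{\Bb u}_{\Ts'}\norm{v^*}_\Ts\leq C^*\norm{u}_{\As}\norm{g}_\Asp. \]
Since $\Asp$ is a Hilbert space with $\Ase$ dense in it, taking the supremum over $g\in\Ase\setminus\set{0}$ yields $\norm{u}_\Asp\leq C^*\norm{u}_{\As}$ for all $u\in\domain(\Bb)$. As $\Asp$ is complete, the inclusion $\domain(\Bb)\hookrightarrow\Asp$ then extends uniquely to a bounded map $\As\ra\Asp$ with the same constant, and its image contains $\domain(\Bb)$, which is dense in $\Asp$ by \cref{rm: continuous embedding} (using $\domain(\Bb)\subseteq_d\Asb\hookrightarrow_d\Asp$); this yields the claimed inequality $\norm{\bar u}_\Asp\leq C^*\norm{\bar u}_{\As}$ on all of $\As$ and shows that the image of $\As$ is dense in $\Asp$.

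The step I expect to be the real obstacle is showing that this extension is \emph{injective}, so that $\As$ may genuinely be regarded as a subspace of $\Asp$. Equivalently, the two norms on $\domain(\Bb)$ must be compatible, i.e.\ $\Bb$ must be closable as an operator $\domain(\Bb)\subseteq\Asp\ra\Ts'$: if $(u_n)\subset\domain(\Bb)$ is $\norm{\cdot}_{\As}$-Cauchy with $u_n\to0$ in $\Asp$, then necessarily $\Bb u_n\to0$ in $\Ts'$. I would argue by the annihilator route: for $\bar u\in\As$ mapping to $0$ in $\Asp$, approximate $\bar u$ by $u_n\in\domain(\Bb)$ and test the identity above against each adjoint solution $v^*_g$, obtaining $\pairing{\B\bar u,v^*_g}{\Ts}=\lim_n\inner{g,u_n}_\Asp=0$ for all $g\in\Ase$; since $\B$ is injective by \cref{tm:abstract setting well-posed}, it remains to conclude $\B\bar u=0$, which holds provided the family $\set{v^*_g:g\in\Ase}$ is total in $\Ts$. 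This totality is the crux and is precisely where the adjoint solvability enters in an essential, rather than merely quantitative, way: \crefl{it: range dense} gives $\kernel(\Bb^*)=\range(\Bb)^{\perp}=\set{0}$, so each $v^*_g$ is the unique solution and the images $\Bb^*v^*_g=g$ exhaust the dense set $\Ase$ in $\Asp$; converting this density of images into density of the preimages $v^*_g$ in $\Ts$ — using the uniform bound $\norm{v^*_g}_\Ts\leq C^*\norm{g}_\Asp$ — is the delicate part. Combined with the previous paragraph this gives $\As\hookrightarrow_d\Asp$.

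Finally, once $\As\hookrightarrow_d\Asp$ is secured, the remainder of \eqref{eq: embedding As in Asp} is automatic: $\As$ is reflexive by \cref{co:abstract setting well-posed}(a) (as $\Ts$ is reflexive), so \cref{rm: gelfand} applied to the dense continuous embedding $\As\hookrightarrow_d\Asp$ into the Hilbert space $\Asp$, under the identification $\Asp\cong\Asp'$, yields $\Asp'\hookrightarrow_d\As'$ with the same embedding constant, i.e.\ the Gelfand triple $\As\hookrightarrow_d\Asp\cong\Asp'\hookrightarrow_d\As'$.
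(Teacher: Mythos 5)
Your opening and closing steps coincide with the paper's own proof. The estimate $\norm{u}_\Asp\leq C^*\norm{u}_{\As}$ on $\domain(\Bb)$ is obtained there exactly as you do it — combining \eqref{eq: adjoint operator definition}, \eqref{eq: gelfand pairing = product}, the stability of the adjoint solutions and the density of $\Ase$ in $\Asp$ (the paper phrases it as a chain of suprema over the solution set $\mathcal{V}\deq\set{v\in\domain(\Bb^*):\Bb^*v\in\Ase}$) — and your final passage to the full triple via reflexivity of $\As$ (\cref{co:abstract setting well-posed}) and \cref{rm: gelfand} is also the paper's. The genuine gap is the middle step, and you name it yourself: you never prove that the family $\set{v^*_g:g\in\Ase}$ is total in $\Ts$, equivalently that the bounded extension $\iota:\As\ra\Asp$ of the inclusion $\domain(\Bb)\subseteq\Asp$ is injective, equivalently that $\Bb$ with domain $\domain(\Bb)$ is closable as an operator from $\Asp$ into $\Ts'$. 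A proof cannot end with \enquote{this is the delicate part}; as submitted, the argument is incomplete.

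However, you have put your finger on the right spot in a much stronger sense than you suspect: the missing step cannot be supplied from the stated hypotheses, because the theorem as stated is false. Take $\Asp\deq\Tsp\deq\Ts\deq\ell^2(\N)$ (identify $\Ts'\cong\ell^2(\N)$), let $\Asb\deq\set{u:\sum_n n^2u_n^2<\infty}$ with $\norm{u}_{\Asb}^2\deq\sum_n n^2u_n^2$, let $\domain(\Bb)$ be the finitely supported sequences, and set $\Bb e_n\deq e_{n+1}+n^{-1/2}e_1$ on the canonical basis $(e_n)_{n\in\N}$. Then \crefl{it: injective} holds (read off the coordinates $2,3,\dots$ of $\Bb u$), \crefl{it: range dense} holds (a vector $z$ orthogonal to all $\Bb e_n$ satisfies $z_{n+1}=-n^{-1/2}z_1$, which is square-summable only if $z_1=0$, whence $z=0$), and \eqref{eq: gelfand As} holds; moreover $\Bb$ is even bounded from $\Asb$ to $\Ts'$ (Cauchy--Schwarz with weights $n$), so $\domain(\Bb^*)=\Ts$ and $\Bb^*v=(v_{n+1}+n^{-1/2}v_1)_{n\in\N}$ in $\Asb'$. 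For every $g\in\Ase\deq\Asp$ the adjoint equation $\Bb^*v^*=g$ has the unique solution $v^*=(0,g_1,g_2,\dots)$ — uniqueness because $v_1\neq 0$ would force $(v_{n+1})_n=(g_n-n^{-1/2}v_1)_n\notin\ell^2$ — and $\norm{v^*}_\Ts=\norm{g}_\Asp$, so all hypotheses of \cref{tm: gelfand carrys} hold with $C^*=1$. Nevertheless, with $\sigma_k\deq\sum_{n=1}^k n^{-1}$ and $u^{(k)}\deq\sigma_k^{-1}\sum_{n=1}^k n^{-1/2}e_n\in\domain(\Bb)$ one computes $\norm{u^{(k)}}_{\Asp}^2=\sigma_k^{-1}\ra 0$ while $\norm{\Bb u^{(k)}-e_1}_{\Ts'}^2=\sigma_k^{-1}\ra 0$; hence $(u^{(k)})_{k\in\N}$ defines an element $\bar u\in\As$ with $\B\bar u=e_1$ and $\norm{\bar u}_{\As}=1$ by \eqref{eq: norm X = Bu}, and if \eqref{eq: embedding As in Asp} were true then $u^{(k)}\ra\bar u$ also in $\Asp$, forcing $\bar u=0$, a contradiction. (Concretely, your totality fails: every adjoint solution has first coordinate $0$, so the whole family annihilates $e_1=\B\bar u$.) The paper's own proof makes exactly this unjustified leap where it asserts that the completion $\widetilde{\Asb}$ of $\domain(\Bb)$ under the graph norm $\norm{\cdot}_{\Bb}^2=\norm{\cdot}_{\Asp}^2+\norm{\Bb\cdot}_{\Ts'}^2$ satisfies $\widetilde{\Asb}\subseteq\Asp$ \enquote{by the completeness of $\Asp$}: that inclusion is precisely the injectivity you flagged. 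So your gap is not a defect relative to the paper — it is the paper's hidden gap made visible. A correct version of the theorem needs an extra hypothesis, e.g.\ that $\Bb:\domain(\Bb)\subseteq\Asp\ra\Ts'$ is closable (equivalently, your totality condition); this does hold in all of the paper's PDE applications (test against smooth compactly supported functions, whose formal-adjoint images lie in $\Asp$), and under it your annihilator argument finishes the proof: $\pairing{\B\bar u,v^*_g}{\Ts}=0$ for all $g\in\Ase$ forces $\B\bar u=0$, and injectivity of $\B$ gives $\bar u=0$.
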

\begin{proof}
    First, note that $\Bb^*$ is well-defined by $\domain(\Bb)\subseteq_d \Asb$, while $\As$ and $\B$ are well-defined by \crefl{it: injective}.
    We denote by $\mathcal{V} \deq \set{v \in \domain(\Bb^*)\, :\, \Bb^* v \in \Ase}\subseteq \domain(\Bb^*) \subseteq \Ts$ the set of all solutions of the adjoint problem with right-hand sides in $\Ase$. Thus, in particular $\norm{v}_\Ts \leq C^* \norm{\Bb^* v}_\Asp$ for all $v \in\mathcal{V}$ by the stability of the adjoint problem and $\Bb^* (\mathcal{V}) = \Ase$ as there exists a solution for all right-hand sides in $\Ase$. Thus, we get for all $u\in \domain(\Bb) \subseteq \Asp$, that
    \begin{align*}
        &\norm{u}_{\domain(\Bb)}
         \overset{\eqref{eq: norm X = Bu}}{=} \norm{\Bb u}_{\Ts'}
        = \sup_{v \in \Ts} \frac{\abs{\pairing{\Bb u,v}{\Ts}}}{\norm{v}_{\Ts}}
        \geq \sup_{v \in \mathcal{V}} \frac{\abs{\pairing{\Bb u,v}{\Ts}}}{\norm{v}_{\Ts}}
        \geq \sup_{v \in \mathcal{V}} \frac{\abs{\pairing{\Bb u,v}{\Ts}}}{C^* \norm{\Bb^* v}_\Asp}                                                        \\
         &\qquad \overset{\eqref{eq: adjoint operator definition}}{=} \sup_{v \in \mathcal{V}} \frac{\abs{\pairing{\Bb^* v, u}{\Asb}}}{C^* \norm{\Bb^* v}_\Asp}
        = \sup_{g\in \Ase} \frac{\abs{\pairing{g, u}{\Asb}}}{C^* \norm{g}_\Asp}
        \overset{\eqref{eq: gelfand pairing = product}}{=} \sup_{g\in \Ase} \frac{\abs{\inner{g, u}_{\Asp}}}{C^* \norm{g}_\Asp}
        \overset{(*)}{=} \sup_{g\in G} \frac{\abs{\inner{g, u}_{\Asp}}}{C^* \norm{g}_\Asp}                                                                    \\
         &\qquad \overset{\phantom{\eqref{eq: adjoint operator definition}}}{\geq} \frac{\abs{\inner{u, u}_{\Asp}}}{C^* \norm{u}_\Asp}
        = \frac{1}{C^*} \norm{u}_{\Asp},
    \end{align*}
    where we used in $(*)$, that $\Ase$ is dense in $\Asp$ (w.r.t.\ $\norm{\cdot}_{\Asp}$). Thus, the graph-type norm defined by $\norm{\cdot}_{\Bb}^2 \deq \norm{\cdot}_\Asp^2 + \norm{\Bb \cdot}_{\Ts'}^2$ of $\Bb$ is equivalent to $\norm{\cdot}_{\domain(\Bb)}$ on $\domain(\Bb)$ since for all $u\in\domain(\Bb)$
    \begin{equation*}
        \norm{u}_{\domain(\Bb)}^2
        \!\deq \norm{\Bb u}_{\Ts'}^2
        \leq \norm{u}_{\Bb}^2
        = \norm{u}_\Asp^2 + \norm{\Bb u}_{\Ts'}^2
        \leq (1+{C^*}^2) \norm{\Bb u}_{\Ts'}^2
        = (1+{C^*}^2) \norm{u}_{\domain(\Bb)}^2.
    \end{equation*}
    Denoting by $(\widetilde{\Asb},\norm{\cdot}_{\widetilde{\Asb}})$ the completion of $\domain(\Bb)$ w.r.t.\ $\norm{\cdot}_{\Bb}$ instead of $\norm{\cdot}_{\domain(\Bb)}$, we have $\widetilde{\Asb} \subseteq \Asp$ by the definition of $\norm{\cdot}_{\Bb}$ and the completeness of $\Asp$. Moreover, $\widetilde{\Asb} = \As$ (as sets) by the equivalence of $\norm{\cdot}_{\Bb}$ and $\norm{\cdot}_{\domain(\Bb)}$.
    Due to the definitions of $\norm{\cdot}_{\Bb}$ and $\norm{\cdot}_{\domain(\Bb)}$, we obtain that $\norm{\cdot}_{\widetilde{\Asb}}^2 = \norm{\cdot}_\Asp^2 + \norm{\B \cdot}_{\Ts'}^2$.
    Now, let $\bar{u}\in \As$ be arbitrary but fixed. By $\bar{u} \in \widetilde{\Asb}$ and $\domain(\Bb) \subseteq_d \widetilde{\Asb}$ there exists a sequence $(u_n)_{n\in\N} \subset \domain(\Bb)$ with $0 = \lim\limits_{n\ra\infty}\norm{\bar{u}-u_n}_{\widetilde{\Asb}}^2 = \lim\limits_{n\ra\infty} (\norm{\bar{u}-u_n}_\Asp^2 + \norm{\bar{u}-u_n}_{\As}^2)$, i.e., $\lim\limits_{n\ra\infty} u_n = \bar{u}$ w.r.t.\ $\norm{\cdot}_\Asp$ and $\norm{\cdot}_{\As}$. Thus,
    $
        \norm{\bar{u}}_{\As}
        = \lim_{n\ra\infty}\norm{u_n}_{\As}
        = \lim_{n\ra\infty} \norm{u_n}_{\domain(\Bb)}
        \geq \lim_{n\ra\infty} \frac{\norm{u_n}_{\Asp}}{C^*}
        = \frac{\norm{\bar{u}}_{\Asp}}{C^*}$,
    and we conclude $\As \hookrightarrow \Asp$.
    Finally, $\As \subseteq_d \Asp$ follows by $\domain(\Bb) \subseteq \As \subseteq \Asp$, and the fact that $\domain(\Bb)$ is dense in $\Asp$ by \eqref{eq: gelfand As} and \cref{rm: continuous embedding}. Finally, $\Asp'\hookrightarrow_d \As'$ holds by \cref{rm: gelfand} and the fact that $\As$ is reflexive by \cref{co:abstract setting well-posed}.
\end{proof}
\begin{remark}\label{rm: gelfand carrys}
    By \eqref{eq: gelfand As} and \cref{rm: continuous embedding}, possible choices for $\Ase$ include $\Ase\subseteq_d \Asp$, $\Ase\subseteq_d \Asb$ and $\Ase\subseteq_d \domain(\Bb)$, where the density in the latter is taken w.r.t.\ $\norm{\cdot}_{\Asb}$ or $\norm{\cdot}_{\Asp}$ but not $\norm{\cdot}_{\domain(\Bb)}$.
    Note that the stability estimate for the adjoint problem has to hold w.r.t.\ $\norm{\cdot}_\Asp$, unlike to \cref{lm: solution of equation sufficient for B1 B2}, where the norm could be chosen arbitrary.
\end{remark}
\begin{remark}\label{rm: notes on gelfand carrys}
    \begin{compactenum}[(i)]
        \item If $\Bb$ is bounded, it holds 
        $\Asb \hookrightarrow_d \As \hookrightarrow_d \Asp \cong \Asp' \hookrightarrow_d \As' \hookrightarrow_d \Asb'$, 
         using \eqref{eq: embedding Asb in Asb if bounded} and \eqref{eq: embedding As in Asp}.
        \item By \eqref{eq: embedding As in Asp}, the graph-type norm $\norm{\cdot}_{\B}^2 \deq \norm{\cdot}_\Asp^2 + \norm{\B \cdot}_{\Ts'}^2$ is equivalent to $\norm{\cdot}_{\As}$.
    \end{compactenum}
\end{remark}
Let us summarize the procedure.
\begin{blackbox}
    \begin{enumerate}[nosep,leftmargin=*]
        \item[5.] Check if the trial space admits a Gelfand structure $\Asb\hookrightarrow_d\Asp \cong \Asp' \hookrightarrow \Asb'$.
        \item[6.] Find a dense subset $\Ase \subseteq_d \Asp$ such that  the adjoint problem $\Bb^* v = g$ in $\Asb'$ is well-posed and stable on $\Ase$ (i.e. only for $g\in\Ase$).
        \item[$\Ra$] The extended trial space admits a Gelfand structure $\As\hookrightarrow_d\Asp \cong \Asp' \hookrightarrow_d \As'$.
    \end{enumerate}
\end{blackbox}

\begin{remark}
    Note that \cref{tm: gelfand carrys} implies \crefl{it: injective,it: range dense} but for $\Bb^*$ instead of $\Bb$ by a similar argument to that in \cref{lm: solution of equation sufficient for B1 B2}. These are precisely the assumption (A$^*$1) and (A$^*$2) in \cite{DahmeHSW2012}. There, (A$^*$1) and (A$^*$2) are used to construct a well-posed extension of $\Bb^*$, while we use these assumptions to add additional structure to an existing extension instead.
\end{remark}

\begin{example}\label{ex: trial pivot space}
    For the variational formulations in \cref{ex: formulations for elliptic,ex: strong formulation heat,ex: weak formulation heat,ex: strong formulation wave,ex: ultra-weak formulation wave,ex: weak formulation wave}, we have that $G\deq L^2(\Omega)$ for the Poisson equation, $G=L^2(Q)$ for the heat equation and the weak in time form of the wave equation as well as $G=L^2(Q,\R^2)$ for the strong and ultra-weak in time form of the wave equation.
\end{example}

\section{Applications}\label{ch: applications}

Now, with the abstract operator framework at hand, let us consider the examples introduced in \S\ref{ch: introduction examples}, in particular, their variational formulations as given in \cref{ex: formulations for elliptic,ex: strong formulation heat,ex: weak formulation heat,ex: strong formulation wave,ex: ultra-weak formulation wave,ex: weak formulation wave} (together with the pivot space $\Asp$ as given in \cref{ex: trial pivot space}). For each of these formulations, we are now going to build their well-posed and optimally stable extension \eqref{eq:well-posed operator-equation} and characterize the extended trial space $\As$ and the extended operator $\B$ as far as possible.

\subsection{The Poisson equation}
\label{subsec:AppPoisson}
It is well known by Kellogg's theorem, e.g. \cite[Theorem 6.14]{GilbaT2001}, that -- under suitable regularity assumptions on the domain $\Omega$ -- the classical formulation of the Poisson problem given in \cref{ex: elliptic problem} has a unique solution\footnote{$C^{k,\alpha}(\overline\Omega)$, $k\in\N$, is the Hölder space with Hölder exponent $\alpha\!\in\! (0,1]$; $C^{k,\alpha}_0(\overline\Omega) = C^{k,\alpha}(\overline\Omega) \cap C_0(\overline\Omega)$.} $u^* \in C_0^{2,\alpha}(\overline{\Omega})$ satisfying $\norm{u^*}_{C^{2,\alpha}(\overline{\Omega})} \leq C \norm{f}_{C^{0,\alpha}(\overline{\Omega})}$ for all right-hand sides $f\in C^{0,\alpha}(\overline{\Omega})$ and some $C<\infty$.
As it holds $C^\infty_0(\Omega) \subset C^{k,\alpha}_0(\overline{\Omega}) \subset C^k_0(\Omega) \subset L^2(\Omega)$, we conclude, that $u^* \in \domain(\Bp)$ and, by the density of $C^\infty_0(\Omega)$ in $L^2(\Omega)$, that $C^{0,\alpha}(\overline{\Omega})$ is dense in $L^2(\Omega)$. Thus, $u^*$ is a solution for each variational formulation given in \cref{ex: formulations for elliptic} and for each formulation holds \cref{tm:abstract setting well-posed} by \cref{lm: solution of equation sufficient for B1 B2,rm: solution of equation sufficient for B1 B2}. We will now see, that the abstract framework introduced in \S\ref{ch: theory} recovers the well-known formulations for the Poisson problem. 

\subsubsection{Strong formulation}
\label{ch:applications:strong Poisson}
Consider the operator $\Bb \deq -\Delta_x^{\mathrm{st}}\vert_{\domain(\Bb)} : \domain(\Bb) \subseteq_d H^\Delta(\Omega)\cap H^1_0(\Omega) \ra L^2(\Omega)$ given in \cref{ex: formulations for elliptic} (\ref{it:formulations for elliptic:strong}). Then, writing $-\Delta \equiv -\Delta_x^{\mathrm{st}}$, it holds for all $u\in \domain(\Bb)$, that
$\norm{\Bb u}_{\Ts'} = \norm{-\Delta_x^{\mathrm{st}}\vert_{\domain(\Bb)} u}_{L^2(\Omega)} = \norm{\Delta u}_{L^2(\Omega)}$,
while the norm on $\Asb \deq H^\Delta(\Omega)\cap H^1_0(\Omega)$ reads $\norm{\cdot}_{\Asb}^2 = \norm{\nabla \cdot}_{L^2(\Omega)}^2 + \norm{\Delta \cdot}_{L^2(\Omega)}^2$. Note, that the norm induced by $\Bb$ and the norm on $\Asb$ are equivalent, as it holds by integration by parts, the Cauchy–Schwarz inequality and Poincaré's inequality, that
\begin{align*}
    \norm{\nabla u}_{L^2(\Omega)}^2 
    &= \inner{\nabla u,\nabla u}_{L^2(\Omega)}
    = \inner{u,-\Delta u}_{L^2(\Omega)} + \underbrace{\inner{u,\vec n\cdot \nabla u}_{L^2(\partial\Omega)}}_{=0\text{ as } u=0 \text{ on } \partial\Omega}\\
    &\leq \norm{u}_{L^2(\Omega)} \norm{\Delta u}_{L^2(\Omega)}
    \leq C_{\Omega}\norm{\nabla u}_{L^2(\Omega)} \norm{\Delta u}_{L^2(\Omega)},
\end{align*}
for all $u \in H^\Delta(\Omega)\cap H^1_0(\Omega)$, where $C_\Omega < \infty$ denotes the Poincaré constant on $\Omega$. Dividing by $\norm{\nabla u}_{L^2(\Omega)}$ then gives the norm equivalence as it holds for all $u \in H^\Delta(\Omega)\cap H^1_0(\Omega)$, that
\begin{equation}\label{eq:applications:strong elliptic norm equivalence}
    \norm{\Delta u}_{L^2(\Omega)}^2 \leq \norm{\nabla u}_{L^2(\Omega)}^2 + \norm{\Delta u}_{L^2(\Omega)}^2
    \leq (1+C_\Omega^2)\norm{\Delta u}_{L^2(\Omega)}^2.
\end{equation}
As $\domain(\Bb) \subseteq_d H^\Delta(\Omega)\cap H^1_0(\Omega)$ and $H^\Delta(\Omega)\cap H^1_0(\Omega)$ is complete with respect to $\norm{\nabla \cdot}_{L^2(\Omega)} + \norm{\Delta\cdot}_{L^2(\Omega)}$, it holds for the well-posed and optimally stable completion \eqref{eq:well-posed operator-equation}, that $\As = H^\Delta(\Omega)\cap H^1_0(\Omega)$ and $\B = -\Delta_x^{\mathrm{st}}$.

\subsubsection{Weak formulation} 
\label{ch:applications:weak Poisson}
Consider the operator $\Bb \deq -\Delta_x^{\mathrm{we}}\vert_{\domain(\Bb)} : \domain(\Bb) \subseteq_d H^1_0(\Omega) \ra H^{-1}(\Omega)$ given in \cref{ex: formulations for elliptic} (\ref{it:formulations for elliptic:weak}). As the isometric Riesz operator \eqref{eq: riesz operator} on $H^1_0(\Omega)$ reads $R_{H^1_0(\Omega)} \equiv -\Delta_x^{\mathrm{we}}$, it holds for all $u\in \domain(\Bb)$, that
\begin{equation*}
    \norm{\Bb u}_{\Ts'} = \norm{-\Delta_x^{\mathrm{we}}\vert_{\domain(\Bb)} u}_{H^{-1}(\Omega)} = \norm{-\Delta_x^{\mathrm{we}} u}_{H^{-1}(\Omega)} = \norm{u}_{H^1_0(\Omega)} = \norm{\nabla u}_{L^2(\Omega)}.
\end{equation*}
Hence, the norm induced by the operator $\Bb$ is the $H^1$-seminorm. As $\domain(\Bb) \subseteq_d H^1_0(\Omega)$ and $H^1_0(\Omega)$ is complete with respect to $\norm{\nabla \cdot}_{L^2(\Omega)}$, it holds for the well-posed and optimally stable completion \eqref{eq:well-posed operator-equation}, that $\As = H^1_0(\Omega)$ and $\B = -\Delta_x^{\mathrm{we}}$.

\subsubsection{Ultra-weak formulation}
\label{ch:applications:utraweak Poisson}
Consider the operator $\Bb \deq -\Delta_x^{\mathrm{uw}}\vert_{\domain(\Bb)} : \domain(\Bb) \subseteq_d L^2(\Omega) \ra [H^\Delta(\Omega)\cap H^1_0(\Omega)]'$ given in \cref{ex: formulations for elliptic} (\ref{it:formulations for elliptic:ultra-weak}). Let $u \in \domain(\Bb)$ be arbitrary. We have already seen in context of the strong formulation, that $-\Delta\equiv -\Delta^{\mathrm{st}}_x : H^\Delta(\Omega)\cap H^1_0(\Omega) \ra L^2(\Omega)$ is an isomorphism and it holds \eqref{eq:applications:strong elliptic norm equivalence}, i.e., there exists a $v_u \in H^\Delta(\Omega)\cap H^1_0(\Omega)$ such that $-\Delta v_u = u$ in $L^2(\Omega)$ and $\norm{u}_{L^2(\Omega)} \leq \norm{v_u}_{\Ts} \leq \sqrt{(1+C_\Omega^2)}\norm{u}_{L^2(\Omega)}$ with $\norm{\cdot}_{\Ts}^2 \deq \norm{\nabla \cdot}_{L^2(\Omega)}^2 + \norm{\Delta \cdot}_{L^2(\Omega)}^2$.
Thus, it holds for $v_u$, that
\begin{equation*}
    \frac{\abs{\pairing{\Bb u,v_u}{\Ts}}}{\norm{v_u}_{\Ts}}
    = \frac{\abs{\inner{u,\Delta v_u}_{L^2(\Omega)}}}{\norm{v_u}_{\Ts}}
    = \frac{\abs{\inner{u,u}_{L^2(\Omega)}}}{\norm{v_u}_{\Ts}}
    \geq \frac{1}{\sqrt{(1+C_\Omega^2)}}\norm{u}_{L^2(\Omega)},
\end{equation*}
and for all $v\in H^\Delta(\Omega)\cap H^1_0(\Omega)$, that
\begin{equation*}
    \frac{\abs{\pairing{\Bb u,v}{\Ts}}}{\norm{v}_{\Ts}}
    = \frac{\abs{\inner{u,\Delta v}_{L^2(\Omega)}}}{\norm{v}_{\Ts}}
    \leq \frac{\norm{u}_{L^2(\Omega)}\norm{\Delta v}_{L^2(\Omega)}}{\norm{v}_{\Ts}}
    \leq \frac{\norm{u}_{L^2(\Omega)}\norm{v}_{\Ts}}{\norm{v}_{\Ts}}
    = \norm{u}_{L^2(\Omega)}.
\end{equation*}
Hence, we conclude, that the norm induced by the operator and the $L^2$-norm are equivalent as it holds $\frac{1}{\sqrt{(1+C_\Omega^2)}}\norm{u}_{L^2(\Omega)} \leq \norm{\Bb u}_{[H^\Delta(\Omega)\cap H^1_0(\Omega)]'} \leq \norm{u}_{L^2(\Omega)}$.
As $\domain(\Bb) \subseteq_d L^2(\Omega)$ and $L^2(\Omega)$ is complete with respect to $\norm{\cdot}_{L^2(\Omega)}$, it holds for the well-posed and optimally stable completion \eqref{eq:well-posed operator-equation}, that $\As = L^2(\Omega)$ and $\B = -\Delta_x^{\mathrm{uw}}$.

\begin{remark}\label{rm: changing the elliptiv no impact}
    In the three settings given in \cref{ex: formulations for elliptic}, the domain $\domain(\Bb)$ was not fixed and was allowd to range from $\domain(\Bp)$ to $\Asb$. Choosing $\domain(\Bb) = \domain(\Bp)$ we can replace $\Bb$ by $\Bp$ as it holds $\Bb\vert_{\domain(\Bp)} = \Bp$ in $\Ts'$ by the construction of $\Bb$ and $\Ts'$. Further, we could also replace $\Asb$ by\footnote{Note that the upper bound of $\domain(\Bb)$ is fixed and does not increase with $\Asb$.} $L^2(\Omega)$. Even after applying these changes (regarding $\Bb$, $\domain(\Bb)$ and $\Asb$) to each of the three formulations -- in particular, we can choose $\Bb = \Bp$, $\domain(\Bb)=\domain(\Bp)$ and $\Asb = L^2(\Omega)$ for all three cases -- they still lead to the  same well-posed extensions $\As$ and $\B$ as presented above.
    Thus, $\Ts$ is the sole quantity that encodes the regularity of the resulting extension, i.e., the choice of $\Ts$ determines if we end up with $-\Delta_x^{\mathrm{st}}$, $-\Delta_x^{\mathrm{we}}$ or $-\Delta_x^{\mathrm{uw}}$.
\end{remark}
\begin{remark}\label{rm: elliptic result hold for general operator}
    The above results still hold for general bounded elliptic second-order differential operators as defined in $\cref{rm: general variational spatial operators}$ although the constants of the norm equivalence between $\norm{\cdot}_{\Asb}$ and $\norm{\Bb\cdot}_{\Ts'}$ will change. 
    In case of the weak formulation, this coincides with the well-known result that $\norm{\Bb\cdot}_{H^{-1}(\Omega)}$  defines an equivalent norm to $\norm{\cdot}_{H^1_0(\Omega)}$ on $H^1_0(\Omega)$ for any bounded self-adjoint and coercive operator\footnote{By $R_{H^1_0(\Omega)} \equiv \Bb$ holds $\norm{\Bb\cdot}_{H^{-1}(\Omega)}^2 = \inner{\Bb\cdot,\Bb\cdot}_{H^{-1}(\Omega)} = \Pairing{\Bb\cdot,\cdot}{H^{-1}(\Omega)}{H^1_0(\Omega)}.$ Then, the norm equivalence immediately follows by the boundedness and coercivity of $\Bb$, while the self-adjointness gives rise not only to a norm but also to an inner product.} $\Bb: H^1_0(\Omega)\ra H^{-1}(\Omega)$. In this case, the isometric Riesz operator \eqref{eq: riesz operator}  on $H^1_0(\Omega)$ equipped with the norm $\norm{\Bb\cdot}_{H^{-1}(\Omega)}$ reads $R_{H^1_0(\Omega)} \equiv \Bb$ instead of $R_{H^1_0(\Omega)} \equiv -\Delta_x^{\mathrm{we}}$.
\end{remark}

\subsection{The heat equation}\label{subsec:AppHeat}
Let us consider the heat equation in \cref{ex: heat equation}.
\subsubsection{Strong in time}
\label{ch:applications:strong in time heat}
Starting with the strong in time variational formulation given in \cref{ex: strong formulation heat}.
It is well known, that $\Bb u=f$ in $\Ts'$ admits a unique solution $u^*\in\Asb$ for each $f\in \Ts'$, \cite{FuhrerK2021,SchwabStevenson,SteinZ2020}. In particular, $\Bb \in \cL(\Asb,\Ts)$ and there holds the inf-sup stability 
\begin{equation}\label{eq:Applications:heat-inf-sup}
    \norm{u}_\Asb \leq \sup_{0\neq v\in \Ts}\frac{\pairing{\Bb u,v}{\Ts}}{\norm{v}_\Ts} = \norm{\Bb u}_{\Ts'},
\end{equation}
for all $u\in \Asb$. Thus, we have the norm equivalence 
\begin{equation}\label{eq:Applications:heat-norm-equivalence}
    \norm{u}_\Asb \leq \norm{\Bb u}_{\Ts'}\leq \sqrt{2}\norm{u}_\Asb\quad \forall u\in \Asb,
\end{equation}
and we immediately see, that the norm induced by the operator is equivalent to the norm on $\Asb$. Thus, applying our abstract framework to the heat equation does not do much, in fact, as $\norm{\cdot}_{\Asb}$ and $\norm{\Bb\cdot}_{\Ts'}$ are equivalent on $\Asb$, and $\Asb$ is already complete with respect to $\norm{\cdot}_{\Asb}$, the completion of $\Asb$ with respect to $\norm{\Bb\cdot}_{\Ts'}$ simply reads $\As \equiv \Asb$ and $\norm{\cdot}_{\As} = \norm{\Bb\cdot}_{\Ts'}$ as it holds $\B \equiv \Bb$. Hence, by going from $\Asb$ to $\As$, we effectively replaced the norm by an equivalent one. Thus, let us use our abstract framework to compute this norm $\norm{\Bb\cdot}_{\Ts'}$. Therefore, we need the Riesz operator \eqref{eq: riesz operator} on $\Ts$ given by $R_{\Ts} \equiv -\Delta_x^{\mathrm{we}}$, i.e., $\Bb = \partial_t + R_\Ts$.
Then using \eqref{eq:analysis:abstract supremizer}, we compute for $u\in\Asb$, that
\begin{equation}\label{eq:Applications:heat-new-norm-representation}
    \begin{aligned}
        \norm{\Bb u}_{\Ts'}^2 &= \pairing{\Bb u,R_\Ts^{-1}\Bb u}{\Ts} = \pairing{ \partial_tu + R_\Ts u, R_\Ts^{-1} (\partial_t u+R_\Ts u)}{\Ts}\\
        & = \pairing{\partial_t u,R_\Ts^{-1}\partial_t u}{\Ts} + 2 \pairing{\partial_t u,u}{\Ts} + \pairing{R_\Ts u,u}{\Ts}\\
        & = \norm{\partial_t u}_{\Ts'}^2 + \norm{u}_{\Ts}^2 + 2\pairing{\partial_t u,u}{\Ts} = \norm{u}_{\Asb}^2+ 2\pairing{\partial_t u,u}{\Ts}. 
    \end{aligned}
\end{equation}
The norm equivalence \eqref{eq:Applications:heat-norm-equivalence} thus relies on the fact that 
\begin{align*}
    0&\leq \norm{u(T)}_{L^2(\Omega)}^2 = \bintegral{\Omega}{}{u(T,x)^2}{x} \overset{u(0)=0}= \bintegral{\Omega}{}{\bintegral{I}{}{\partial_tu(t,x)^2}{t}}{x} = 2\pairing{\partial_t u,u}{\Ts}\\
    &\leq 2\norm{\partial_t u}_{\Ts'}\norm{u}_{\Ts}\leq \norm{\partial_t u}_{\Ts'}^2+\norm{u}_{\Ts}^2 = \norm{u}_{\Asb}^2.
\end{align*}
Hence, we get $\B = \Bb = \partial_t - \Delta_x^{\mathrm{we}}$ and $\As = \Asb = L^2(I;H^1_0(\Omega))\cap H^1_{0,}(I;H^{-1}(\Omega))$ with norm $\norm{\cdot}_{\As} = \sqrt{\norm{\cdot}_{\Asb}^2 + 2\pairing{\partial_t \cdot,\cdot}{\Ts}}$.
\begin{remark}
    As seen above, the mixed term in the norm can be written as $2\pairing{\partial_t u,u}{\Ts} = \norm{u(T)}_{L^2(\Omega)}^2$,
    i.e., the norm $\norm{\cdot}_{\As}$ and therefore the well-posed formulation \eqref{eq:well-posed operator-equation} of the heat equation corresponds exactly to the well-posed and optimally stable formulation introduced in \cite{UrbanP2013}.
\end{remark}
\begin{remark}
    Note, that the supremizer of \eqref{eq:Applications:heat-inf-sup} is given in \eqref{eq:analysis:abstract supremizer} as $v_u = R_\Ts^{-1}\Bb u = R_\Ts^{-1}\partial_t u + u$, which was already used in \cite{ErnGuermond,SchwabStevenson,Steinbach2015,TantardiniVeeser2016,SpaceTimeUrbanPatera1,UrbanP2013} to show the stability of the formulation. 
\end{remark}
\begin{remark}\label{rm: heat result hold for general operator}
    The above results hold true when replacing $-\Delta_x^{\textrm{we}}$ by an arbitrary bounded self-adjoint coercive operator $A_x:H^1_0(\Omega) \ra H^{-1}(\Omega)$ as we can replace the norm on $H^1_0(\Omega)$ by the equivalent norm $\norm{A_x\cdot}_{H^{-1}(\Omega)}$ as mentioned in \cref{rm: elliptic result hold for general operator}.
\end{remark}

\subsubsection{Weak in time}
\label{ch:applications:weak in time heat}
Now, let us consider the weak in time variational formulation given in \cref{ex: weak formulation heat}, which is exactly the formulation used e.g. in \cite{BabuskaJanik,CheginiStevenson}. It was shown in \cite[Thm. 2.2]{CheginiStevenson}, that $\Bb \in \Lis(\Asb,\Ts')$, i.e. \eqref{eq:operator-equation} is already well-posed and we immediately get $\As \equiv \Asb$ with $\norm{\cdot}_{\As}$ being equivalent to $\norm{\cdot}_{\Asb}$, and $\B \equiv \Bb$. Hence, the application of our framework reduces to changing the norm on the trial space and thus adding optimal stability to the already given well-posedness. Unlike to the strong in time formulation above, there is no explicit representation for the Riesz operator $R_{\Ts}$. Hence, we can not give an representation for $\norm{\cdot}_{\As}$ by a similar calculation to \eqref{eq:Applications:heat-new-norm-representation}.

\subsection{The wave equation}\label{ch: wave equation}
Next, we consider the wave equation in \cref{ex: wave equation}.

\subsubsection{Strong in time}
\label{ch:applications:strong in time wave}
We start with the strong in time variational formulation of the wave equation as given in \cref{ex: strong formulation wave}. We denote by $\mathbb{F}$ the 2D flip operator, i.e. for two Banach spaces $\Ase_1$ and $\Ase_2$, their vector product $\Ase \deq \Ase_1\times\Ase_2$ and $\begin{psmallmatrix}x_1\\x_2\end{psmallmatrix}  \in \Ase$, we have
\begin{gather*}
    \mathbb{F}\begin{psmallmatrix}x_1\\x_2\end{psmallmatrix} = \begin{psmallmatrix}x_2\\x_1\end{psmallmatrix},\qquad
    \mathbb{F}(\Ase_1\times\Ase_2) = \Ase_2\times\Ase_1,\qquad
    \norm{\cdot}_{\mathbb{F}\Ase} = \norm{\mathbb{F}\cdot}_{\Ase},\\
    (\mathbb{F} \Ase)' = \mathbb{F}\Ase',\qquad
    \mathbb{F}L^2(I;\Ase) = L^2(I;\mathbb{F}\Ase).
\end{gather*}
Thus, with $W \deq L^2(\Omega) \times H^1_0(\Omega)$, we get $\Asb = H^1_{0,}(I;W') \cap L^2(I;\mathbb{F}W)$ and $\Ts = L^2(I;W)$ and their norms are given by $\norm{\vec{w}}_{W}^2 \deq \norm{w_1}_{L^2(\Omega)}^2 + \norm{w_2}_{H^1_0(\Omega)}^2$, $\norm{\cdot}_{\Asb}^2 \deq \norm{\partial_t\cdot}_{\Ts'}^2 + \norm{\cdot}_{\mathbb{F}\Ts}^2$ and $\norm{\cdot}_{\Ts} \deq \norm{\cdot}_{L^2(I;W)}$. 
Now, for $\vec{f} \in\Ts'$, the variational formulation in \cref{ex: strong formulation wave} amounts to find $\vec{u}^* \in \Asb$ such that
\begin{equation}\label{eq:Applications:wave-system-first-order-VF}
    \Bb\vec{u}^* = \vec{f}\text{ in }\Ts'\qquad\text{with}\qquad
    \pairing{\Bb \vec{u},\vec{v}}{\Ts} \deq \pairing{\partial_t \vec{u}+A \vec{u},\vec{v}}{\Ts},    
\end{equation}
for all $\vec{u}\in\Asb$ and all $\vec{v}\in\Ts$ and $A \deq \begin{psmallmatrix} 0 & -\Id\\ -\Delta_x^{\mathrm{we}} & 0 \end{psmallmatrix}: \mathbb{F} W \ra W'$. 
Further, by $W' = L^2(\Omega)\times H^{-1}(\Omega)$, the dual space and the dual norm of $\Ts'$ read
\begin{equation}\label{eq:Applications:strong wave:dual Ts}
    \Ts' = L^2(I;W'),\qquad
    \norm{\cdot}_{\Ts'}^2 = \norm{\cdot}_{L^2(Q)}^2 + \norm{\cdot}_{L^2(I;H^{-1}(\Omega))}^2.
\end{equation}
\begin{remark}\label{lm:application:wave operator bounded}
    The operator $\Bb$ in \eqref{eq:Applications:wave-system-first-order-VF} is bounded, i.e. $\Bb \in \cL(\Asb,\Ts')$, as it holds for $\vec{u}\in\Asb$, using $R_{H^1_0(\Omega)} \equiv -\Delta_x^{\mathrm{we}}$ for the Riesz operator \eqref{eq: riesz operator} on $H^1_0(\Omega)$, that
    \begin{align*}
        \norm{\Bb\vec{u}}_{\Ts'}^2
        &\overset{\eqref{eq:Applications:strong wave:dual Ts}}{=} \norm{\partial_t u_1-u_2}_{L^2(Q)}^2 + \norm{(\Delta_x^{\mathrm{we}})^{-1} \parens{\partial_t u_2 - \Delta_x^{\mathrm{we}}u_1}}_{L^2(I;H^1_0(\Omega))}^2\\
        &\leq 2\parens[\big]{\norm{\partial_t u_1}_{L^2(Q)}^2 + \norm{u_2}_{L^2(Q)}^2 + \norm{\partial_t u_2}_{L^2(I;H^{-1}(\Omega))}^2 +\norm{u_1}_{L^2(I;H^1_0(\Omega))}^2}\\
        &= 2\parens{\norm{\vec{u}}_{\mathbb{F}\Ts}^2 + \norm{\partial_t\vec{u}}_{\Ts'}^2}
        = 2 \norm{\vec{u}}_{\Asb}^2.
    \end{align*}
\end{remark}
\begin{theorem}[{\cite[Ch.\,3 Thm.\,8.1 \& eq.\,(8.15)]{LionsM1972}}]\label{thm:Applications:strong-wave-solution}
    Let $f\in L^2(Q)$, then there exists a unique $u^*\in H^2(I;H^{-1}(\Omega)) \cap H^1(I;L^2(\Omega))\cap L^2(I;H^1_0(\Omega))$ such that 
    \begin{equation*}
        \partial_{tt} u^* - \Delta^{\mathrm{we}}_x u^* = f \text{ in } L^2(I;H^{-1}(\Omega)),\quad u^*(0) = 0,\quad \partial_t u^*(0) = 0. 
    \end{equation*}
    Further, there exists $\hat C<\infty$ independent of $u^*$ and $f$ such that $\norm{\partial_t u^*}_{L^2(Q)}^2 + \norm{u^*}_{L^2(I;H^1_0(\Omega))}^2 \leq \hat C \norm{f}_{L^2(Q)}^2$.
\end{theorem}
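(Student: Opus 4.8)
The plan is to establish existence by the Faedo--Galerkin method, read off the stability bound from the standard energy identity, bootstrap the $H^2(I;H^{-1}(\Omega))$ regularity directly from the equation, and prove uniqueness by a Ladyzhenskaya-type duality argument. For the Galerkin scheme, let $\{w_k\}_{k\ge1}$ be the Dirichlet eigenfunctions of $-\Delta$ on $\Omega$, orthonormal in $L^2(\Omega)$ and orthogonal in $H^1_0(\Omega)$, hence a basis of both. Seeking $u_m(t)=\sum_{k=1}^m g_{mk}(t)\,w_k$ in $V_m\deq\operatorname{span}\{w_1,\dots,w_m\}$ and testing the equation against each $w_j$ reduces, by the eigenfunction property, to the decoupled scalar ODEs $g_{mk}''+\lambda_k g_{mk}=\inner{f(\cdot),w_k}_{L^2(\Omega)}$ with $g_{mk}(0)=g_{mk}'(0)=0$; each has a unique $H^2(I)$ solution via Duhamel, so $u_m$ is well-defined.

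The energy estimate is the engine for both stability and compactness. Testing the Galerkin equation with $\partial_t u_m$ (legitimate, since $\partial_t u_m(t)\in V_m\subset H^1_0(\Omega)$) and using $\inner{\nabla u_m,\nabla\partial_t u_m}_{L^2(\Omega)}=\tfrac12\tfrac{d}{dt}\norm{\nabla u_m}_{L^2(\Omega)}^2$ yields $\tfrac12\tfrac{d}{dt}\big(\norm{\partial_t u_m}_{L^2(\Omega)}^2+\norm{\nabla u_m}_{L^2(\Omega)}^2\big)=\inner{f,\partial_t u_m}_{L^2(\Omega)}$. Integrating in time, inserting the homogeneous initial data, and absorbing the right-hand side by Young's inequality and Gronwall's lemma bounds $\sup_{t\in\overline{I}}\big(\norm{\partial_t u_m(t)}_{L^2(\Omega)}^2+\norm{u_m(t)}_{H^1_0(\Omega)}^2\big)$ by $C\norm{f}_{L^2(Q)}^2$, uniformly in $m$. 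Extracting a subsequence converging weak-$*$ in $L^\infty(I;H^1_0(\Omega))$ with time derivatives converging weak-$*$ in $L^\infty(I;L^2(\Omega))$, I would pass to the limit in the variational identity to obtain a solution $u^*$ and recover the initial conditions; weak lower semicontinuity of the norms transfers the uniform bound to $u^*$, giving the claimed estimate (the $L^\infty$-in-time bound is in fact stronger than the stated $L^2$-in-time one). The extra regularity is then immediate: rewriting the PDE as $\partial_{tt}u^*=f-(-\Delta_x^{\mathrm{we}})u^*$ and noting that both terms lie in $L^2(I;H^{-1}(\Omega))$ (the second because $u^*\in L^2(I;H^1_0(\Omega))$ and $-\Delta_x^{\mathrm{we}}\in\cL(H^1_0(\Omega),H^{-1}(\Omega))$) shows $\partial_{tt}u^*\in L^2(I;H^{-1}(\Omega))$, i.e.\ $u^*\in H^2(I;H^{-1}(\Omega))$.

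The hard part will be uniqueness. With $f=0$, the naive move is to test with $\partial_t u^*$ to get conservation of energy, but this pairing is ill-defined: $\partial_{tt}u^*$ lives only in $H^{-1}(\Omega)$ while $\partial_t u^*$ is merely in $L^2(\Omega)$, not in $H^1_0(\Omega)$. I would circumvent this by the classical Ladyzhenskaya/Lions device: fix $s\in I$ and use the time-integrated test function $\phi(t)\deq\int_t^s u^*(\tau)\,d\tau$ for $t\le s$ and $\phi(t)\deq0$ for $t\ge s$, which satisfies $\phi(t)\in H^1_0(\Omega)$, $\phi(s)=0$, and $\partial_t\phi=-u^*$ on $(0,s)$. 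Substituting $\phi$ into the weak formulation and integrating by parts in time---shifting one derivative off $\partial_{tt}u^*$ and using $u^*(0)=\partial_t u^*(0)=0$---turns the left-hand side into a perfect time derivative, producing an identity that forces $u^*\equiv0$ on $(0,s)$; since $s\in I$ is arbitrary, $u^*\equiv0$. Alternatively, since \cref{ex: FO wave equation} recasts the problem as a first-order system, one could appeal to semigroup theory: the operator $A_\circ$ generates a unitary $C_0$-group on the energy space $H^1_0(\Omega)\times L^2(\Omega)$, so existence, uniqueness and the energy bound follow from the variation-of-constants formula. I expect the bookkeeping in the integration by parts of the Ladyzhenskaya argument---justifying each manipulation at the available regularity---to be the only genuinely delicate point.
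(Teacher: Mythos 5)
Your proposal is correct, but there is nothing to compare it against inside the paper: the authors do not prove \cref{thm:Applications:strong-wave-solution} at all, they import it verbatim from Lions--Magenes \cite[Ch.\,3 Thm.\,8.1 \& eq.\,(8.15)]{LionsM1972}. What you have written is essentially the classical textbook proof of that cited result: Faedo--Galerkin with the Dirichlet eigenfunction basis (which decouples into scalar ODEs solvable by Duhamel), the energy identity obtained by testing with $\partial_t u_m$ (legitimate only at the discrete level, as you note), weak-$*$ compactness and lower semicontinuity to pass to the limit, the bootstrap $\partial_{tt}u^* = f + \Delta_x^{\mathrm{we}} u^* \in L^2(I;H^{-1}(\Omega))$ for the $H^2(I;H^{-1}(\Omega))$ regularity, and the Ladyzhenskaya/Lions time-integrated test function $\phi(t)=\int_t^s u^*(\tau)\,d\tau$ for uniqueness --- where you correctly identify the genuine obstruction, namely that pairing $\partial_{tt}u^*\in L^2(I;H^{-1}(\Omega))$ with $\partial_t u^*\in L^2(Q)$ is undefined, so the naive energy argument fails at this regularity. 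One cosmetic imprecision: at fixed $s$ the resulting identity $\tfrac12\norm{u^*(s)}_{L^2(\Omega)}^2+\tfrac12\norm{\nabla\phi(0)}_{L^2(\Omega)}^2=0$ yields $u^*(s)=0$ (not $u^*\equiv 0$ on $(0,s)$ directly), and varying $s$ then gives $u^*\equiv 0$; the conclusion is unaffected. As to what each route buys: the paper's citation keeps the focus on its actual contribution (the completion/extension framework applied on top of this classical solvability result), whereas your argument makes the statement self-contained and in fact delivers the stronger $L^\infty$-in-time energy bound. Be aware, however, that your Gronwall/Young route produces a constant of order $T^2$ rather than the sharp value $\hat C = T^2/2$ that the paper's \cref{rm:Applications:strong-wave-solution:constant} attributes to the techniques of \cite{SteinZ2020}; this suffices for the theorem as stated (only finiteness of $\hat C$ is claimed) but not for the sharpness discussion that follows it.
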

\begin{remark}\label{rm:Applications:strong-wave-solution:constant}
    Using the techniques of \cite[Theorem 5.1, Remark 4.6]{SteinZ2020}, we can show that the constant in the preceding theorem equals $\hat C=\frac{T^2}{2}$ and the estimate is in fact sharp in the powers of $T$. 
\end{remark}
\begin{corollary}\label{co:Applications:wave-system-B1-B2}
    The variational formulation \eqref{eq:Applications:wave-system-first-order-VF} possesses a solution $\vec{u}^*\in\Asb$ for each $\vec{f} \in \Tse \deq H^1_{0,}(I;L^2(\Omega))\times L^2(Q)$. Furthermore, there exists $C_Q<\infty$ independent of $\vec{u}^*$ and $\vec{f}$ such that $\norm{\vec{u}^*}_{\Asb} \leq C_Q \norm{\vec{f}}_{L^2(Q;\R^2)}$. In particular \cref{tm:abstract setting well-posed} holds by \cref{lm: solution of equation sufficient for B1 B2,rm: solution of equation sufficient for B1 B2}.
\end{corollary}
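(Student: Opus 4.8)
The plan is to reduce the first-order system in \eqref{eq:Applications:wave-system-first-order-VF} to the scalar second-order wave equation handled by \cref{thm:Applications:strong-wave-solution}, to solve the latter, to reconstruct a candidate $\vec{u}^*\in\Asb$, and finally to invoke \cref{lm: solution of equation sufficient for B1 B2,rm: solution of equation sufficient for B1 B2}. Writing $\vec{f}=(f_1,f_2)$ with $f_1\in H^1_{0,}(I;L^2(\Omega))$ and $f_2\in L^2(Q)$, and $\vec{u}=(u_1,u_2)$, the equation $\Bb\vec{u}=\vec{f}$ reads componentwise $\partial_t u_1-u_2=f_1$ and $\partial_t u_2-\Delta_x^{\mathrm{we}}u_1=f_2$ together with $u_1(0)=u_2(0)=0$. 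Solving the first line for $u_2=\partial_t u_1-f_1$ and inserting it into the second gives $\partial_{tt}u_1-\Delta_x^{\mathrm{we}}u_1=\tilde{f}$ with $\tilde{f}\deq f_2+\partial_t f_1$; since $f_1\in H^1_{0,}(I;L^2(\Omega))$ implies $\partial_t f_1\in L^2(Q)$, we have $\tilde{f}\in L^2(Q)$, which is exactly the admissible class for \cref{thm:Applications:strong-wave-solution}.

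First I would apply \cref{thm:Applications:strong-wave-solution} to $\tilde{f}$ to obtain the unique $u_1\in H^2(I;H^{-1}(\Omega))\cap H^1(I;L^2(\Omega))\cap L^2(I;H^1_0(\Omega))$ with $u_1(0)=\partial_t u_1(0)=0$ and $\norm{\partial_t u_1}_{L^2(Q)}^2+\norm{u_1}_{L^2(I;H^1_0(\Omega))}^2\leq\hat{C}\norm{\tilde{f}}_{L^2(Q)}^2$. Then I set $u_2\deq\partial_t u_1-f_1$ and $\vec{u}^*\deq(u_1,u_2)$. The initial conditions are satisfied, since $u_1(0)=0$ and, using $f_1(0)=0$ (as $f_1\in H^1_{0,}$) together with $\partial_t u_1(0)=0$, also $u_2(0)=0$. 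The first-component equation holds by construction, and substituting $\partial_t u_2=\partial_{tt}u_1-\partial_t f_1$ into the second component recovers $\partial_t u_2-\Delta_x^{\mathrm{we}}u_1=\tilde{f}-\partial_t f_1=f_2$; hence $\Bb\vec{u}^*=\vec{f}$ in $\Ts'$.

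The technical core is to show $\vec{u}^*\in\Asb$ together with the stability bound, for which I would estimate the four summands of $\norm{\vec{u}^*}_{\Asb}^2=\norm{\partial_t u_1}_{L^2(Q)}^2+\norm{\partial_t u_2}_{L^2(I;H^{-1}(\Omega))}^2+\norm{u_1}_{L^2(I;H^1_0(\Omega))}^2+\norm{u_2}_{L^2(Q)}^2$ separately. The first and third are controlled directly by the energy bound above; for the fourth, $u_2=\partial_t u_1-f_1$ gives $\norm{u_2}_{L^2(Q)}\leq\norm{\partial_t u_1}_{L^2(Q)}+\norm{f_1}_{L^2(Q)}$. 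The one term needing care is $\norm{\partial_t u_2}_{L^2(I;H^{-1}(\Omega))}$: rather than differentiating $u_1$ twice, I would use $\partial_t u_2=\Delta_x^{\mathrm{we}}u_1+f_2$ together with the facts that $-\Delta_x^{\mathrm{we}}$ is the isometric Riesz map $H^1_0(\Omega)\to H^{-1}(\Omega)$ and $L^2(\Omega)\hookrightarrow H^{-1}(\Omega)$, so that $\norm{\partial_t u_2}_{L^2(I;H^{-1}(\Omega))}\leq\norm{u_1}_{L^2(I;H^1_0(\Omega))}+C\norm{f_2}_{L^2(Q)}$; this is why the $L^2(I;H^1_0(\Omega))$-regularity supplied by \cref{thm:Applications:strong-wave-solution} is exactly enough and no extra regularity of the data is needed in this term. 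Collecting the four bounds yields $\norm{\vec{u}^*}_{\Asb}\leq C_Q(\norm{\tilde{f}}_{L^2(Q)}+\norm{f_1}_{L^2(Q)}+\norm{f_2}_{L^2(Q)})$ with a constant governed by $\hat{C}$ and the embedding $L^2(\Omega)\hookrightarrow H^{-1}(\Omega)$. I expect the genuine obstacle to lie in the data-norm bookkeeping: the reduction unavoidably produces $\tilde{f}=f_2+\partial_t f_1$, so the bound carries $\norm{\partial_t f_1}_{L^2(Q)}$ rather than only $\norm{f_1}_{L^2(Q)}$, which is precisely why $\Tse$ is equipped with $H^1$-in-time regularity in its first slot and must be accounted for when comparing with the right-hand side norm of the statement. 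Finally, $\Tse\subseteq_d\Ts'$ holds because $H^1_{0,}(I;L^2(\Omega))$ is dense in $L^2(Q)$ and $L^2(Q)$ is dense in $L^2(I;H^{-1}(\Omega))$, so \cref{lm: solution of equation sufficient for B1 B2,rm: solution of equation sufficient for B1 B2} yield \crefl{it: injective,it: range dense} and hence the applicability of \cref{tm:abstract setting well-posed}.
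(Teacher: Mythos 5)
Your construction coincides with the paper's own proof: the reduction to the second-order problem via $\tilde f\deq\partial_t f_1+f_2\in L^2(Q)$, the application of \cref{thm:Applications:strong-wave-solution}, the reconstruction $\vec u^*=(u_1,\partial_t u_1-f_1)$, the verification of the component equations and initial conditions, and the term-by-term estimate of $\norm{\vec u^*}_{\Asb}$ (including handling $\partial_t u_2=\Delta_x^{\mathrm{we}}u_1+f_2$ through the Riesz isometry and $L^2(\Omega)\hookrightarrow H^{-1}(\Omega)$) are exactly the steps the paper takes, and up to that point your argument is correct.

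However, the issue you flag at the end is not bookkeeping that can be ``accounted for'': it is an unbridgeable gap, because the stability estimate claimed in the corollary is false. Your route (and the paper's) yields
\begin{equation*}
    \norm{\vec u^*}_{\Asb}\leq C\bigl(\norm{\partial_t f_1}_{L^2(Q)}+\norm{f_1}_{L^2(Q)}+\norm{f_2}_{L^2(Q)}\bigr),
\end{equation*}
and the paper converts $\norm{\partial_t f_1}_{L^2(Q)}$ into $\norm{f_1}_{L^2(Q)}$ by citing a ``1D Poincar\'e inequality'' $\norm{\partial_t f}_{L^2(Q)}\leq\tfrac{T}{\sqrt 2}\norm{f}_{L^2(Q)}$ on $H^1_{0,}(I;L^2(\Omega))$ --- but that is the Poincar\'e inequality written backwards (the correct one bounds $\norm{f}$ by $\norm{\partial_t f}$), and no such reverse inequality can hold, since $\partial_t$ is unbounded w.r.t.\ the $L^2(Q)$-norm. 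A resonance example in the spirit of \cref{lem:Applications:wave-counterexample}, but with the data in the \emph{first} slot, disproves the claim. With the eigenpairs $-\Delta_x\phi_k=\lambda_k\phi_k$, $\norm{\phi_k}_{L^2(\Omega)}=1$, take $\vec f^k\deq(\sin(\sqrt{\lambda_k}\,t)\,\phi_k,\,0)\in\Tse$. Then $\tilde f^k=\sqrt{\lambda_k}\cos(\sqrt{\lambda_k}\,t)\,\phi_k$ is resonant, and the (by \crefl{it: injective} unique) solution of \eqref{eq:Applications:wave-system-first-order-VF} is
\begin{equation*}
    u_1^k=\tfrac{t}{2}\sin(\sqrt{\lambda_k}\,t)\,\phi_k,
    \qquad
    u_2^k=\partial_t u_1^k-f_1^k
    =\bigl(\tfrac{\sqrt{\lambda_k}\,t}{2}\cos(\sqrt{\lambda_k}\,t)-\tfrac12\sin(\sqrt{\lambda_k}\,t)\bigr)\phi_k,
\end{equation*}
as one checks directly from $\partial_t u_2^k=-\lambda_k u_1^k=\Delta_x^{\mathrm{we}}u_1^k$. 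Then
\begin{equation*}
    \norm{\vec f^k}_{L^2(Q;\R^2)}^2=\int_0^T\sin^2(\sqrt{\lambda_k}\,t)\,dt\leq T,
    \qquad
    \norm{\vec u^k}_{\Asb}^2\geq\norm{\nabla_x u_1^k}_{L^2(Q)}^2=\frac{\lambda_k}{4}\int_0^T t^2\sin^2(\sqrt{\lambda_k}\,t)\,dt,
\end{equation*}
where the last integral tends to $T^3/6$ as $k\ra\infty$; hence $\norm{\vec u^k}_{\Asb}/\norm{\vec f^k}_{L^2(Q;\R^2)}\ra\infty$ and no finite $C_Q$ as in the statement exists.

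What your argument does establish --- existence for every $\vec f\in\Tse$ plus stability w.r.t.\ the natural norm of $\Tse$, say $\bigl(\norm{f_1}_{H^1(I;L^2(\Omega))}^2+\norm{f_2}_{L^2(Q)}^2\bigr)^{1/2}$ --- suffices for the final clause of the corollary, because \cref{lm: solution of equation sufficient for B1 B2} places no restriction on which norm $\Tse$ carries; with that choice, \crefl{it: injective,it: range dense} and hence \cref{tm:abstract setting well-posed} follow, so you should simply state your estimate in that form and invoke the lemma. The $L^2(Q;\R^2)$-form of the estimate, by contrast, is not a removable technicality of your write-up but an error in the paper: it is precisely what is fed into \cref{co:applications:adjoint wave well-posed} and thereby into \cref{tm: gelfand carrys} (whose hypothesis requires stability w.r.t.\ $\norm{\cdot}_{\Asp}$), and the same sequence shows $\norm{\vec u^k}_{L^2(Q;\R^2)}\geq\norm{u_2^k}_{L^2(Q)}\gtrsim\sqrt{\lambda_k}$ while $\norm{\vec u^k}_{\As}=\norm{\Bb\vec u^k}_{\Ts'}=\norm{f_1^k}_{L^2(Q)}\leq\sqrt{T}$ by \eqref{eq: norm X = Bu}, so the embedding $\As\hookrightarrow_d L^2(Q;\R^2)$ asserted in \cref{co:applications:collect results wave} fails as well.
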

\begin{proof}
    For $\vec{f} = (f_1,f_2) \in \Tse$ define $f \deq \partial_t f_1 + f_2 \in L^2(Q)$ and denote by $u^*$ and $C$ the solution and the constant given by \cref{thm:Applications:strong-wave-solution} with respect to the right-hand side $f$. Now, defining $\vec{u}^* \equiv (u^*_1,u^*_2) \deq (u^*,\partial_t u^* - f_1)$, it holds $\vec{u}^* \in\Asb$,  $\partial_t u^*_1 - u^*_2 = f_1$ in $L^2(Q)$ and $\partial_t u^*_2 - \Delta_x^{\mathrm{we}} u^*_1 = \partial_{tt}u^* - \partial_t f_1 - \Delta_x^{\mathrm{we}} u^* = f_2$ in $L^2(I;H^{-1}(\Omega))$.
    Hence, we conclude, that $\vec{u}^*$ solves \eqref{eq:Applications:wave-system-first-order-VF} as it holds for all $\vec{v} \in \Ts$, that
    \begin{align*}
        \pairing{\Bb \vec{u}^*,\vec{v}}{\Ts} 
        & = \inner{\partial_t u^*_1 - u^*_2,v_1}_{L^2(Q)} + \Pairing{\partial_t u^*_2 - \Delta_x^{\mathrm{we}} u^*_1,v_2}{L^2(I;H^{-1}(\Omega))}{L^2(I;H^1_0(\Omega))}\\
        & = \inner{f_1,v_1}_{L^2(Q)} + \Pairing{f_2,v_2}{L^2(I;H^{-1}(\Omega))}{L^2(I;H^1_0(\Omega))}= \pairing{\vec{f},\vec{v}}{\Ts}.
    \end{align*}
    Now, regarding the stability estimate, it first holds by \cref{thm:Applications:strong-wave-solution}, together with the 1D Poincaré inequality $\norm{\partial_t f}_{L^2(Q)} \leq \frac{T}{\sqrt{2}}\norm{f}_{L^2(Q)}$ for all $f \in H^1_{0,}(I;L^2(\Omega))$, that 
    \begin{align*}
        \norm{\partial_t u^*_1}_{L^2(Q)}^2 + \norm{u^*_1}_{L^2(I;H^1_0(\Omega))}^2 
        &\leq \hat{C} \norm{\partial_t f_1 + f_2}_{L^2(Q)}^2 
        \leq 2\hat{C}(\norm{\partial_t f_1}_{L^2(Q)}^2 + \norm{f_2}_{L^2(Q)}^2)\\
        &\leq 2\hat{C}\tilde{C}_T \norm{\vec{f}}_{L^2(Q;\R^2)}^2,
    \end{align*}
    with $\tilde{C}_T \deq \max\set{1,\frac{T^2}{2}}$.
    Next, using Poincaré's inequality in $\Omega$, it holds $H^1_0(\Omega)\hookrightarrow_d L^2(\Omega)$ for some $C_\Omega<\infty$, in particular $\norm{f}_{L^2(I;H^{-1}(\Omega))} \leq C_\Omega\norm{f}_{L^2(Q)}$ for all $f\in L^2(Q)$ by \cref{rm: gelfand}, and thus 
    \begin{align*}
        &\norm{\partial_t u^*_2}_{L^2(I;H^{-1}(\Omega))}^2 + \norm{u^*_2}_{L^2(Q)}^2
        = \norm{f_2 + \Delta_x^{\mathrm{we}} u^*_1}_{L^2(I;H^{-1}(\Omega))}^2 + \norm{\partial_t u^*_1 - f_1}_{L^2(Q)}^2\\
        &\qquad \leq 2\parens[\big]{\norm{f_2}_{L^2(I;H^{-1}(\Omega))}^2 + \norm{u^*_1}_{L^2(I;H^1_0(\Omega))}^2 + \norm{\partial_t u^*_1}_{L^2(Q)}^2+\norm{f_1}_{L^2(Q)}^2}\\
        &\qquad \leq (2\tilde{C}_\Omega + 4\hat{C}\tilde{C}_T) \norm{\vec{f}}_{L^2(Q;\R^2)}^2,
    \end{align*}
    with $\tilde{C}_\Omega \deq \max\set{1,C_\Omega^2}$. Now, defining $C_Q^2 \deq 2\tilde{C}_\Omega + 6\hat{C}\tilde{C}_T$, we get
    \begin{align*}
        \norm{\vec{u}^*}_{\Asb}^2
        &= \norm{\partial_t u^*_2}_{L^2(I;H^{-1}(\Omega))}^2 + \norm{\partial_t u^*_1}_{L^2(Q)}^2 + \norm{u^*_1}_{L^2(I;H^1_0(\Omega))}^2 + \norm{u^*_2}_{L^2(Q)}^2
        \\
        &\leq C_Q \norm{\vec{f}}_{L^2(Q;\R^2)}^2.
    \end{align*}
\end{proof}
Although, \eqref{eq:Applications:wave-system-first-order-VF} admits a unique solution $\vec u^*\in \Asb$ for all $\vec f\in H^1_{0,}(I;L^2(\Omega))\times L^2(Q)\subset \Ts'$ we will now show that this can not be extended to all $\vec{f} \in \Ts'$. Thus, for the wave equation, unlike the previous examples, considering $\As$ and $\B$ instead of $\Asb$ and $\Bb$, respectively, is in fact necessary for a well-posed formulation (instead of just the optimal stability) as $\Bb : \Asb \ra \Ts'$ isn't already an isomorphism.
\begin{theorem}\label{lem:Applications:wave-counterexample}
    The operator $\Bb$ in \eqref{eq:Applications:wave-system-first-order-VF} does not define an isomorphism as there does not exists an inf-sup constant $\beta>0$ such that $\beta\norm{\vec u}_{\Asb}\leq \norm{\Bb \vec{u}}_{\Ts'}$ for all $\vec u\in \Asb$, i.e. the inverse of $\Bb$ is not bounded.
\end{theorem}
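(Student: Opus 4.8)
The plan is to refute the inf-sup inequality directly by producing a sequence $(\vec u_k)_{k\in\N}\subset\Asb$ for which the ratio $\norm{\Bb\vec u_k}_{\Ts'}/\norm{\vec u_k}_{\Asb}$ tends to $0$; this shows that no $\beta>0$ can satisfy $\beta\norm{\vec u}_{\Asb}\le\norm{\Bb\vec u}_{\Ts'}$, so that $\Bb^{-1}$ is unbounded. Since $-\Delta_x^{\mathrm{we}}$ has compact resolvent, I would fix an $L^2(\Omega)$-orthonormal basis of Dirichlet eigenfunctions $\phi_k\in H^1_0(\Omega)$ with $-\Delta_x^{\mathrm{we}}\phi_k=\lambda_k\phi_k$ and $\lambda_k\to\infty$, and set $\omega_k\deq\sqrt{\lambda_k}$. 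Recalling $R_{H^1_0(\Omega)}\equiv-\Delta_x^{\mathrm{we}}$, one has $\norm{\phi_k}_{H^1_0(\Omega)}^2=\lambda_k$ and (via the pivot identification) $\norm{\phi_k}_{H^{-1}(\Omega)}^2=\lambda_k^{-1}=\omega_k^{-2}$, so each spatial mode decouples $\Bb$ into a scalar problem in time carrying the large parameter $\omega_k$.

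\textbf{Construction.} For the temporal part I would pick, once and for all, a fixed smooth envelope $\eta\in C^\infty([0,T])$ with $\eta(0)=0$ and $\eta\equiv1$ on $[\delta,T]$ for some fixed $\delta\in(0,T)$, and define $\vec u_k\deq(u_1^{(k)},u_2^{(k)})$ by $u_1^{(k)}(t)\deq\omega_k^{-1}\eta(t)\cos(\omega_k t)\,\phi_k$ and $u_2^{(k)}(t)\deq-\eta(t)\sin(\omega_k t)\,\phi_k$. Because $\eta(0)=0$ both components vanish at $t=0$, and since the scalar factors lie in $H^1_{0,}(I)$ one checks directly that $\vec u_k\in H^1_{0,}(I;W')\cap L^2(I;\mathbb FW)=\Asb$.

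\textbf{The decisive computation} is that the oscillatory pieces telescope. Using $-\Delta_x^{\mathrm{we}}u_1^{(k)}=\omega_k\eta\cos(\omega_k t)\phi_k$ one obtains $\partial_t u_1^{(k)}-u_2^{(k)}=\omega_k^{-1}\eta'\cos(\omega_k t)\phi_k$ and $\partial_t u_2^{(k)}-\Delta_x^{\mathrm{we}}u_1^{(k)}=-\eta'\sin(\omega_k t)\phi_k$, so that, taking the $L^2(Q)$- and $L^2(I;H^{-1}(\Omega))$-norms from \eqref{eq:Applications:strong wave:dual Ts}, using $\norm{\phi_k}_{H^{-1}(\Omega)}^2=\omega_k^{-2}$ and $\cos^2+\sin^2=1$,
\[
\norm{\Bb\vec u_k}_{\Ts'}^2=\omega_k^{-2}\int_0^T(\eta')^2\,\mathrm dt=\omega_k^{-2}\norm{\eta'}_{L^2(I)}^2 .
\]
The two $\mathbb FW$-terms combine by the same Pythagorean cancellation, giving $\norm{u_1^{(k)}}_{L^2(I;H^1_0(\Omega))}^2+\norm{u_2^{(k)}}_{L^2(Q)}^2=\int_0^T\eta^2\,\mathrm dt=\norm{\eta}_{L^2(I)}^2$, whence $\norm{\vec u_k}_{\Asb}^2\ge\norm{\vec u_k}_{\mathbb F\Ts}^2\ge\norm{\eta}_{L^2(I)}^2>0$. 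Therefore $\norm{\Bb\vec u_k}_{\Ts'}/\norm{\vec u_k}_{\Asb}\le\omega_k^{-1}\norm{\eta'}_{L^2(I)}/\norm{\eta}_{L^2(I)}\to0$, which is the claim.

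\textbf{Main obstacle.} The delicate point — and the reason the construction needs the envelope $\eta$ — is that the naive separated wave mode $u_2=\partial_t u_1$ with $u_1=\omega_k^{-1}\sin(\omega_k t)\phi_k$ (an exact homogeneous solution) cannot meet the second initial condition $u_2(0)=0$; forcing a single-frequency correction leaves an $O(1)$ residual (morally the nonzero initial velocity) and yields only a constant ratio of order $1/\sqrt3$, not a vanishing one. The fixed, frequency-independent ramp $\eta$ resolves exactly this: it enforces both homogeneous initial conditions while confining the entire defect to $\eta'$, which is felt only through the weak $H^{-1}(\Omega)$ norm and the slowly varying envelope, so the residual scales like $\omega_k^{-1}$. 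This is the analytic manifestation of the fact that, although the classical Cauchy problem with zero data has only the trivial solution (uniqueness, i.e.\ \crefl{it: injective}), $\Bb$ fails to be boundedly invertible in these weakly coupled trial/test norms.
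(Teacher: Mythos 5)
Your construction is correct: the membership $\vec u_k\in\Asb$ holds since $\eta(0)=0$ kills both initial traces, the algebra $\partial_t u_1^{(k)}-u_2^{(k)}=\omega_k^{-1}\eta'\cos(\omega_k t)\phi_k$ and $\partial_t u_2^{(k)}-\Delta_x^{\mathrm{we}}u_1^{(k)}=-\eta'\sin(\omega_k t)\phi_k$ checks out, and with $\norm{\phi_k}_{H^1_0(\Omega)}=\omega_k$, $\norm{\phi_k}_{H^{-1}(\Omega)}=\omega_k^{-1}$ the two Pythagorean cancellations give exactly $\norm{\Bb\vec u_k}_{\Ts'}^2=\omega_k^{-2}\norm{\eta'}_{L^2(I)}^2$ and $\norm{\vec u_k}_{\mathbb{F}\Ts}^2=\norm{\eta}_{L^2(I)}^2$, so the ratio decays like $\omega_k^{-1}$ and no inf-sup constant can exist. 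This is the same overall strategy as the paper (a Weyl-type sequence built from high-frequency Dirichlet eigenfunctions, exploiting that the second residual component is only measured in $H^{-1}(\Omega)$), but the temporal construction is genuinely different: the paper, following Zank, takes $\vec u^k$ to be the \emph{exact} solution under resonant forcing $\vec f^k=(0,2\phi_k\sin(\sqrt{\lambda_k}t))$, so that the linear-in-$t$ resonance growth makes $\norm{\vec u^k}_{\Asb}$ of order one while $\norm{\vec f^k}_{\Ts'}=O(\omega_k^{-1})$; you instead build a quasi-mode, an envelope-modulated free wave whose residual is confined to the ramp region and controlled by $\eta'$. Your route is arguably more elementary and robust — no particular solution of the resonant ODE has to be guessed, and the same envelope trick works verbatim for any self-adjoint spatial operator with eigenvalues tending to infinity — whereas the paper's choice of exact resonant solutions pays off later: those same $\vec u^k$ are reused after the theorem to evaluate the cross term $\pairing{\partial_t\vec u^k,J\vec u^k}{\Ts}\to-\frac{T^3}{3}$ and to exhibit $\norm{\vec f^k}_{\Ts'}/\norm{\Bb\vec u^k}_{\Ts'}=1$ alongside $\norm{\vec f^k}_{\Ts'}/\norm{\vec u^k}_{\Asb}\to0$, which your cutoff sequence (not being an exact solution for a clean right-hand side) would not deliver as directly.
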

\begin{proof}
    We construct a counterexample, following the ideas presented in \cite[Theorem 4.2.24]{Zank2019}. To this end, let $\phi_k\in H^1_0(\Omega)$ denote the normalized eigenfunctions and $0<\lambda_1\leq \lambda_2\leq \ldots \to\infty$ the eigenvalues of the spatial Laplacian, given as $-\Delta_x \phi_k = \lambda_k \phi_k$ in $\Omega$ with $\norm{\phi_k}_{L^2(\Omega)}=1$, and consider the function 
    \begin{equation*}
        \vec u^k(t,x) \deq \begin{pmatrix}
           \phi_k(x)\int_0^t s\sin(\sqrt{\lambda_k}s)\, ds \\ \phi_k(x)t\sin(\sqrt{\lambda_k}t)
        \end{pmatrix},
    \end{equation*}
    which solves \eqref{eq:Applications:wave-system-first-order-VF} for $\vec{f}^k \deq (0,f^k)$, $f^k(t,x)\deq 2\phi_k(x)\sin(\sqrt{\lambda_k} t)$. Elementary computations give $\norm{\vec u^k}_{\Asb}^2 
        = \lambda_k^{-3/2}\big(
        T\, \sqrt{\lambda_k}
        +\frac{2}{3}T^3 \lambda_k^{3/2}
        -\frac12 T\, \sin(\sqrt{\lambda_k}T)\big)$
        and 
    $\norm{\vec f^k}_{\Ts'}^2 = \lambda_k^{-3/2}\big( {2T\sqrt{\lambda_k} - \sin(2\sqrt{\lambda_k}T)}\big)$. 
    Therefore, taking the limit we get $\frac{\norm{\vec f^k}_{\Ts'}}{\norm{\vec u^k}_{\Asb}}\to 0$ for $k\to \infty$
    which completes the proof. 
\end{proof}
Thus, in contrast to elliptic and parabolic PDEs, in this case the norm induced by the operator is not equivalent to the norm on $\Asb$. In order to determine $\norm{\Bb\cdot}_{\Ts'}$, using $R_{H^1_0(\Omega)}\equiv -\Delta_x^{\mathrm{we}}$, it holds for all $\vec{u} \in\Asb$, that
\begin{align*}
    &\norm{\Bb \vec{u}}_{\Ts'}^2
    \overset{\eqref{eq:Applications:strong wave:dual Ts}}{=} \norm{\partial_t u_1 - u_2}_{L^2(Q)}^2 + \norm{\partial_t u_2 - \Delta_x^{\mathrm{we}} u_1}_{L^2(I;H^{-1}(\Omega))}^2\\
    &\quad=\norm{\partial_t u_1}_{L^2(Q)}^2 + \norm{u_2}_{L^2(Q)}^2 -2 \inner{\partial_t u_1,u_2}_{L^2(Q)} \\
    &\qquad+ \norm{\partial_t u_2}_{L^2(I;H^{-1}(\Omega))}^2  + \norm{u_1}_{L^2(I;H^1_0(\Omega))}^2 + 2\Pairing{\partial_t u_2, u_1}{L^2(I;H^{-1}(\Omega))}{L^2(I;H^1_0(\Omega))}\\   
    &\quad= \norm{\vec{u}}_{\Asb}^2 + 2\pairing{\partial_{t} \vec{u}, J\vec{u}}{\Ts},
\end{align*}
with $J \deq \begin{psmallmatrix} 0&-\Id\\\Id&0\end{psmallmatrix}$. 
\begin{remark}
Alternatively, using the Riesz operator \eqref{eq: riesz operator} on $\Ts$, given by $R_{\Ts} = \begin{psmallmatrix}\Id & 0\\ 0& -\Delta_x^{\mathrm{we}} \end{psmallmatrix}$, together with \eqref{eq:analysis:abstract supremizer}, we get
\begin{align*}
    \norm{\Bb \vec u}_{\Ts'}^2 & = \pairing{\Bb \vec u,R_{\Ts}^{-1}\Bb \vec u}{\Ts} = \pairing{\partial_t \vec u +A\vec u,R_{\Ts}^{-1}(\partial_t \vec u+A \vec u)}{\Ts} \\
    & = \pairing{\partial_t \vec u, R_{\Ts}^{-1}\partial_t\vec u}{\Ts} + 2\pairing{\partial_t \vec u,R_{\Ts}^{-1}A \vec u}{\Ts} + \pairing{A^* R_{\Ts}^{-1}A\vec u, \vec u}{\mathbb{F}\Ts},
\end{align*}
where $A^* = \begin{psmallmatrix}0&-\Delta_x^{\mathrm{we}}\\-\Id&0\end{psmallmatrix} : W \ra \mathbb{F}W'$ denotes the adjoint of $A$.
To further simplify this expression, note that $R_{\Ts} = AJ^*$ and $J^{-1} = J^*$, hence $R_{\Ts}^{-1}A = J$ as well as $A^* R_{\Ts}^{-1}A= \begin{psmallmatrix}-\Delta_x^{\mathrm{we}} & 0 \\ 0 & \Id\end{psmallmatrix} = R_{\mathbb{F}\Ts},$ with $R_{\mathbb{F}\Ts}$ denoting the Riesz operator \eqref{eq: riesz operator}  on $\mathbb{F}\Ts$. Thus, we get the same result, namely 
\begin{equation}\label{eq:applications:norm induced by wave operator}
\begin{aligned}
    \norm{\Bb \vec u}_{\Ts'}^2
    &= \pairing{\partial_t \vec u, R_{\Ts}^{-1}\partial_t \vec u}{\Ts} + 2\pairing{\partial_t \vec u,J \vec u}{\Ts} + \pairing{R_{\mathbb{F}\Ts}\vec u, \vec u}{\mathbb{F}\Ts}\\
    &= \norm{\partial_t \vec{u}}_{\Ts'}^2 + 2\pairing{\partial_t \vec u,J \vec u}{\Ts} + \norm{\vec{u}}_{\mathbb{F}\Ts}^2
    = \norm{\vec{u}}_{\Asb}^2 + 2\pairing{\partial_t \vec u,J \vec u}{\Ts}.
\end{aligned}
\end{equation}
\end{remark}

As we can see, the representation of the norm $\norm{\Bb \cdot}_{\Ts'}$ is quite similar to the one derived for the heat equation in \eqref{eq:Applications:heat-new-norm-representation}. However, the additional term 
\begin{equation*}
    2\pairing{\partial_{t} \vec{u}, J\vec{u}}{\Ts} = 2\Pairing{\partial_t u_2,u_1}{L^2(I;H^{-1}(\Omega))}{L^2(I;H^1_0(\Omega))} - 2\inner{\partial_t u_1,u_2}_{L^2(Q)}
\end{equation*}
can not be bounded from below by zero. In fact, for $\vec u^k$ as defined in the proof of \cref{lem:Applications:wave-counterexample}, we compute $\pairing{\partial_t \vec u^k,J\vec u^k}{\Ts} = -\frac{T^3}{3}-\frac{\sin(2\sqrt{\lambda_k}T)}{4\lambda_k^{3/2}}+\frac{T}{2\lambda_k}$,
which goes to $-\frac{T^3}{3}<0$ as $k\to\infty$. Moreover, as the function $\vec{u}^k$ solves \eqref{eq:Applications:wave-system-first-order-VF} for the right-hand side $\vec{f}^k$, it holds $\Bb \vec{u}^k = \vec{f}^k$ and we get
\begin{equation*}
    \frac{\norm{\vec f^k}_{\Ts'}}{\norm{\Bb \vec u^k}_{\Ts'}} = \frac{\norm{\vec f^k}_{\Ts'}}{\norm{\vec f^k}_{\Ts'}} = 1\qquad\text{but}\qquad \frac{\norm{\vec f^k}_{\Ts'}}{\norm{\vec u^k}_{\Asb}}\to 0,\quad k\to \infty.
\end{equation*}
Hence, the norms $\norm{\Bb\cdot}_{\Ts'}$ and $\norm{\cdot}_{\Asb}$ are not equivalent. By \eqref{eq: norm X = Bu} and \eqref{eq:applications:norm induced by wave operator}, we have a characterization of $\norm{\cdot}_{\As}$ on $\Asb \subseteq_d\As$.
Now, with the norm $\norm{\cdot}_{\As}$ characterized (at least on $\Asb$), let us consider the space $\As$. Unlike to the elliptic and parabolic equations, we cannot give $\As$ explicitly, but we can characterize it to some extend by \cref{tm: gelfand carrys}.
\begin{corollary}\label{co:applications:adjoint wave well-posed}
    The adjoint problem of \eqref{eq:Applications:wave-system-first-order-VF}, namely to find $\vec{v}^*\in\Ts$ such that
    \begin{equation*}
        \pairing{\Bb^* \vec{v}^*,\vec{u}}{\Asb} 
        = \pairing{\vec{g},\vec{u}}{\Asb}\quad\forall \vec{u}\in\Asb,
    \end{equation*}
    possess a solution for each $\vec{g} \in \Ase \deq L^2(Q) \times H^1_{,0}(I;L^2(\Omega)) \subset_d \Asp$. Further, there exists $C^*_Q<\infty$ independent of $\vec{v}^*$ and $\vec{g}$ such that $\norm{\vec{v}^*}_{\Ts} \leq C^*_Q \norm{\vec{g}}_{L^2(Q;\R^2)}$. In particular \cref{tm: gelfand carrys} holds by \cref{rm: gelfand carrys}.
\end{corollary}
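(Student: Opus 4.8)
The plan is to construct $\vec{v}^*$ explicitly, realizing the adjoint problem as the time-reversed counterpart of \cref{co:Applications:wave-system-B1-B2}. First I would identify the adjoint operator. Inserting the definition of $\Bb$ into $\pairing{\Bb^* \vec{v},\vec{u}}{\Asb} = \pairing{\Bb \vec{u},\vec{v}}{\Ts}$ and integrating by parts in time, the fact that every $\vec{u}\in\Asb$ satisfies $\vec{u}(0)=0$ removes the boundary term at $t=0$, while the term at $t=T$ vanishes for all admissible $\vec{u}$ precisely when $\vec{v}(T)=0$. Hence $\domain(\Bb^*)$ encodes homogeneous terminal conditions and $\Bb^*\vec{v} = -\partial_t\vec{v} + A^*\vec{v}$ with $A^* = \begin{psmallmatrix} 0 & -\Delta_x^{\mathrm{we}} \\ -\Id & 0 \end{psmallmatrix}$ as in \cref{ex: ultra-weak formulation wave}. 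Componentwise, the adjoint equation $\Bb^*\vec{v}^* = \vec{g}$ reads $-\partial_t v_1^* - \Delta_x^{\mathrm{we}} v_2^* = g_1$ and $-\partial_t v_2^* - v_1^* = g_2$.

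Next I would eliminate $v_1^*$. From the second equation $v_1^* = -\partial_t v_2^* - g_2$, and substitution into the first yields the scalar backward wave equation $\partial_{tt} v_2^* - \Delta_x^{\mathrm{we}} v_2^* = g_1 - \partial_t g_2$ in $L^2(I;H^{-1}(\Omega))$, with terminal conditions $v_2^*(T)=0$ and $\partial_t v_2^*(T) = 0$; the latter follows from $v_1^*(T)=0$ together with $g_2(T)=0$, which holds since $g_2\in H^1_{,0}(I;L^2(\Omega))$. The right-hand side $f \deq g_1 - \partial_t g_2$ lies in $L^2(Q)$ because $g_1\in L^2(Q)$ and $\partial_t g_2\in L^2(Q)$. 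Applying the change of variables $t\mapsto T-t$ turns this terminal-value problem into the forward initial-value problem of \cref{thm:Applications:strong-wave-solution}, which provides a unique $v_2^*$ with the asserted regularity and the bound $\norm{\partial_t v_2^*}_{L^2(Q)}^2 + \norm{v_2^*}_{L^2(I;H^1_0(\Omega))}^2 \leq \hat C\,\norm{f}_{L^2(Q)}^2$. Setting $v_1^* \deq -\partial_t v_2^* - g_2$, I would then verify that $\vec{v}^* = (v_1^*,v_2^*)\in\Ts$, that $\vec{v}^*\in\domain(\Bb^*)$, and that $\Bb^*\vec{v}^* = \vec{g}$.

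For the stability estimate, I would combine the above bound with $\norm{v_1^*}_{L^2(Q)}^2 \leq 2\bigl(\norm{\partial_t v_2^*}_{L^2(Q)}^2 + \norm{g_2}_{L^2(Q)}^2\bigr)$ and the 1D Poincaré inequality $\norm{\partial_t g_2}_{L^2(Q)} \leq \tfrac{T}{\sqrt 2}\norm{g_2}_{L^2(Q)}$ on $H^1_{,0}(I;L^2(\Omega))$, so that $\norm{f}_{L^2(Q)}^2$ and hence $\norm{\vec{v}^*}_\Ts^2 = \norm{v_1^*}_{L^2(Q)}^2 + \norm{v_2^*}_{L^2(I;H^1_0(\Omega))}^2$ are controlled by $\norm{\vec{g}}_{L^2(Q;\R^2)}^2$ up to a $T$-dependent constant $C^*_Q$, exactly as in \cref{co:Applications:wave-system-B1-B2}. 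The density $\Ase \subseteq_d \Asp$ is immediate since $H^1_{,0}(I;L^2(\Omega))$ is dense in $L^2(Q)$, and then \cref{tm: gelfand carrys} applies through \cref{rm: gelfand carrys}.

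I expect the main obstacle to be the rigorous identification of $\domain(\Bb^*)$ and the verification that the constructed $\vec{v}^*$ actually lies in it: the integration-by-parts argument above is formal in the Bochner--Gelfand setting, so care is needed to justify the integration by parts in time for the $H^{-1}(\Omega)$--$H^1_0(\Omega)$ duality pairing in the second component and to confirm that the homogeneous terminal conditions $\vec{v}^*(T)=0$ are exactly the conditions encoded by the adjoint domain, rather than additional regularity being silently assumed.
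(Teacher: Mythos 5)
Your route is genuinely different from the paper's. The paper never touches the adjoint system componentwise: it conjugates with the time-reversal/flip operator $\mathbb{T}\mathbb{F}$, sets $\vec{v}^* \deq \mathbb{T}\mathbb{F}\vec{u}^*$ where $\Bb\vec{u}^* = \mathbb{T}\mathbb{F}\vec{g}$, and thereby reduces the adjoint problem wholesale to the already-proven forward result \cref{co:Applications:wave-system-B1-B2}, inheriting both existence and the constant $C_Q$. You instead identify $\Bb^*$ explicitly, eliminate $v_1^*$, and apply \cref{thm:Applications:strong-wave-solution} after time reversal; unwinding the paper's reduction shows both constructions produce the identical $\vec{v}^*$, and your existence argument is sound (the construct-and-verify strategy also sidesteps the need to characterize $\domain(\Bb^*)$, which was the obstacle you flagged). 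The genuine gap is in your stability step: you quote the 1D Poincar\'e inequality backwards. On $H^1_{,0}(I;L^2(\Omega))$ the correct inequality is $\norm{g_2}_{L^2(Q)} \leq \tfrac{T}{\sqrt{2}}\norm{\partial_t g_2}_{L^2(Q)}$ (the function is controlled by its derivative); your claimed $\norm{\partial_t g_2}_{L^2(Q)} \leq \tfrac{T}{\sqrt{2}}\norm{g_2}_{L^2(Q)}$ is false for any rapidly oscillating $g_2$. Hence $\norm{g_1-\partial_t g_2}_{L^2(Q)}$ is not controlled by $\norm{\vec{g}}_{L^2(Q;\R^2)}$, and your chain only proves a bound by $\norm{g_1}_{L^2(Q)} + \norm{\partial_t g_2}_{L^2(Q)}$, a strictly stronger norm of the data than the statement allows.

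This gap is not repairable, because the asserted estimate is itself false. Take $g_1 \deq 0$ and $g_2(t,x) \deq \sin(\sqrt{\lambda_k}(T-t))\phi_k(x)$ with $(\lambda_k,\phi_k)$ the normalized Dirichlet eigenpairs as in \cref{lem:Applications:wave-counterexample}; then $\vec{g}\in\Ase$ and $\norm{\vec{g}}_{L^2(Q;\R^2)}^2 \leq T$ uniformly in $k$. Your own elimination gives $\partial_{tt}v_2^* - \Delta_x^{\mathrm{we}} v_2^* = -\partial_t g_2 = \sqrt{\lambda_k}\cos(\sqrt{\lambda_k}(T-t))\phi_k$ with vanishing terminal data, i.e.\ resonant forcing, whose solution is $v_2^*(t,x) = \tfrac{T-t}{2}\sin(\sqrt{\lambda_k}(T-t))\phi_k(x)$. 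Since $\Bb^*$ is injective (because $\range(\Bb)$ is dense in $\Ts'$ by \crefl{it: range dense}), this is the only adjoint solution, and $\norm{\vec{v}^*}_{\Ts}^2 \geq \norm{v_2^*}_{L^2(I;H^1_0(\Omega))}^2 = \lambda_k \int_0^T \tfrac{\tau^2}{4}\sin^2(\sqrt{\lambda_k}\tau)\,d\tau \ra \infty$ as $k\ra\infty$, so no finite $C^*_Q$ independent of $\vec{g}$ can exist. You should be aware that the identical reversed inequality appears verbatim in the paper's proof of \cref{co:Applications:wave-system-B1-B2}, on which the paper's own proof of the present corollary rests; you have therefore blindly reproduced a defect of the paper itself (one which, by the same counterexample, also undermines the applicability of \cref{tm: gelfand carrys} here and the embedding $\As\hookrightarrow_d L^2(Q;\R^2)$ claimed in \cref{co:applications:collect results wave}). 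But measured against the statement as written, your stability argument fails at exactly this step.
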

\begin{proof}
 In addition to the flip operator $\mathbb{F}$, we define the time reversal operator by $\mathbb{T}f(t) \deq f(T-t)$ for almost all $t\in I$. It is easy to see, that $\mathbb{F}^2 = \Id$, $\mathbb{T}^2 = \Id$, $\mathbb{F}^* = \mathbb{F}$, $\mathbb{T}^* = \mathbb{T}$, $\mathbb{T}\mathbb{F} = \mathbb{F}\mathbb{T}$, $\partial_t \mathbb{T}\mathbb{F} = - \mathbb{T}\mathbb{F}\partial_t$ and $\mathbb{T}\mathbb{F}A = A^* \mathbb{T}\mathbb{F}$. Further, it holds $\mathbb{T}\mathbb{F} \vec{u} \in \Ts$, $(\mathbb{T}\mathbb{F} \vec{u})(T) = 0$ and $\mathbb{T}\mathbb{F}A\vec{u}\in \mathbb{F}\Ts'$ for all $\vec{u}\in\Asb$.
Now, let $\vec{g} \in \Ase$ be arbitrary but fixed, define $\vec{f}\deq \mathbb{T}\mathbb{F} \vec{g} \in \Tse$ and denote by $\vec{u}^* \in \Asb$ the solution of $\Bb \vec{u}^* = \vec{f}$ in $\Ts'$ given by \cref{co:Applications:wave-system-B1-B2}. Defining $\vec{v}^* \deq \mathbb{T}\mathbb{F} \vec{u}^* \in \Ts$, it holds for all $\vec{u} \in \Asb$, that
    \begin{align*}
        &\pairing{\Bb^* \vec{v}^*,\vec{u}}{\Asb}
        = \pairing{\Bb \vec{u},\vec{v}^*}{\Ts}
        =\pairing{\partial_t\vec{u}+A\vec{u},\vec{v}^*}{\Ts}
        =\pairing{\partial_t\vec{u}+A\vec{u},\mathbb{T}\mathbb{F}\vec{u}^*}{\Ts}
        \\
        &\qquad=\pairing{\mathbb{T}\mathbb{F}\partial_t\vec{u}+\mathbb{T}\mathbb{F}A\vec{u},\vec{u}^*}{\mathbb{F}\Ts}
        =\pairing{-\partial_t\mathbb{T}\mathbb{F}\vec{u}+A^*\mathbb{T}\mathbb{F}\vec{u},\vec{u}^*}{\mathbb{F}\Ts}\\
        &\qquad= \pairing{\partial_t\vec{u}^* + A\vec{u}^*,\mathbb{T}\mathbb{F}\vec{u}}{\Ts} + \inner{\vec{u}^*(T),\underbrace{(\mathbb{T}\mathbb{F}\vec{u})(T)}_{=0}}_{L^2(\Omega)} - \inner{\underbrace{\vec{u}^*(0)}_{=0},(\mathbb{T}\mathbb{F}\vec{u})(0)}_{L^2(\Omega)}\\
        &\qquad= \pairing{\Bb \vec{u}^*,\mathbb{T}\mathbb{F}\vec{u}}{\Ts}
        = \pairing{\vec{f},\mathbb{T}\mathbb{F}\vec{u}}{\Ts}
        = \pairing{\mathbb{T}\mathbb{F} \vec{g},\mathbb{T}\mathbb{F}\vec{u}}{\Ts}
        = \pairing{\vec{g},\vec{u}}{\Asb},
    \end{align*}
    i.e., $\vec{v}^*\in\Ts$ is a solution of the adjoint problem. 
    Thereby we used in the last step, that $\vec{g},\vec{u},\mathbb{T}\mathbb{F} \vec{g},\mathbb{T}\mathbb{F} \vec{u} \in L^2(Q;\R^2)$ and thus $\pairing{\mathbb{T}\mathbb{F} \vec{g},\mathbb{T}\mathbb{F}\vec{u}}{\Ts}
        = \inner{\mathbb{T}\mathbb{F} \vec{g},\mathbb{T}\mathbb{F}\vec{u}}_{L^2(Q;\R^2)}
        = \inner{\vec{g},\vec{u}}_{L^2(Q;\R^2)}
        = \pairing{\vec{g},\vec{u}}{\Asb}$ by \eqref{eq: gelfand pairing = product} and the Gelfand triples $(\Ts,\Tsp,\Ts')$ and $(\Asb,\Asp,\Asb')$. 
    Finally, using the stability estimate for $\vec{u}^*$ provided by \cref{co:Applications:wave-system-B1-B2}, it holds $\norm{\vec{v}^*}_{\Ts} = \norm{\vec{u}^*}_{\mathbb{F}\Ts}
        \leq \norm{\vec{u}^*}_{\Asb} \leq C_Q \norm{\vec{f}}_{L^2(Q;\R^2)}$.
\end{proof}
Let us collect our results.
\begin{corollary}\label{co:applications:collect results wave}
    For the well-posed extension \eqref{eq:well-posed operator-equation} of \cref{ex: strong formulation wave} holds
    \begin{compactenum}[(i)]
        \item $\Asb \hookrightarrow_d \As \hookrightarrow_d L^2(Q; \R^2)$, and the embedding constants read $\norm{u}_{L^2(Q;\R^2)} \leq C_Q \norm{u}_{\As}$, $u\in \As$ and $\norm{u}_{\As} \leq \sqrt{2} \norm{u}_{\Asb}$, $ u\in \Asb$,
            with the constant $C_Q = \sqrt{\max\set{1,C_\Omega^2} + \max\set{3T^2,\frac{3}{2}T^4}}$;
        \item the other directions of the norm inequalities do not holds, i.e. $\norm{\cdot}_{\As}$ is neither equivalent to $\norm{\cdot}_{L^2(Q;\R^2)}$ on $\As$, nor equivalent to $\norm{\cdot}_{\Asb}$ on $\Asb$;
        \item $\Asb \subsetneq \As \subsetneq L^2(Q;\R^2)$;
        \item\label{it:collect results wave - norm representation} for $\vec{u}\in\Asb$, we have $\norm{\vec{u}}_{\As} = \sqrt{\norm{\vec{u}}_{\Asb}^2 + 2\pairing{\partial_t \vec u,J \vec u}{\Ts}}$, where
        $J \deq \begin{pmatrix}
            0 &-\Id  \\ \Id & 0 
        \end{pmatrix}$.
    \end{compactenum}
\end{corollary}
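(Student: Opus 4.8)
The four assertions split into two positive statements that are essentially bookkeeping from the results already at hand, and two negative statements (non-equivalence and strictness) which I would reduce to the bounded inverse theorem. First I would settle (iv) and the embedding constants of (i). Since $\Bb$ is bounded (\cref{lm:application:wave operator bounded}) we have $\domain(\Bb)=\Asb$, so \eqref{eq: norm X = Bu} gives $\norm{\vec u}_{\As}=\norm{\Bb\vec u}_{\Ts'}$ for every $\vec u\in\Asb$; inserting the expansion \eqref{eq:applications:norm induced by wave operator} yields (iv) at once. The continuous dense embedding $\Asb\hookrightarrow_d\As$ with constant $\sqrt2$ is \eqref{eq: embedding Asb in Asb if bounded} together with $\norm{\Bb}_{\cL(\Asb,\Ts')}\le\sqrt2$ from \cref{lm:application:wave operator bounded}, while $\As\hookrightarrow_d L^2(Q;\R^2)$ is precisely the conclusion \eqref{eq: embedding As in Asp} of \cref{tm: gelfand carrys}, whose hypotheses were verified in \cref{co:applications:adjoint wave well-posed}.

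For the strictness in (iii) and the non-equivalences in (ii) I would invoke a single abstract principle: if $X\hookrightarrow_d Y$ is a continuous dense embedding of Banach spaces and the two norms are equivalent on $X$, then $X$ is closed and dense in $Y$, hence $X=Y$; equivalently, a strict dense inclusion forces non-equivalent norms, and conversely non-equivalent norms force a strict inclusion (the bounded inverse theorem applied to the identity map, using that $\Asb$, $\As$ and $L^2(Q;\R^2)$ are each complete in their own norm). It therefore suffices to exhibit, in each pair, one sequence witnessing non-equivalence. For $\Asb$ versus $\As$ this is the counterexample $\vec u^k$ of \cref{lem:Applications:wave-counterexample}: there $\norm{\Bb\vec u^k}_{\Ts'}/\norm{\vec u^k}_{\Asb}\to0$, i.e.\ $\norm{\vec u^k}_{\As}/\norm{\vec u^k}_{\Asb}\to0$, so $\norm{\cdot}_{\As}$ and $\norm{\cdot}_{\Asb}$ are not equivalent and hence $\Asb\subsetneq\As$.

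For $\As$ versus $L^2(Q;\R^2)$ the embedding in (i) already gives $\norm{\cdot}_{L^2(Q;\R^2)}\le C_Q\norm{\cdot}_{\As}$, so only the reverse bound must be excluded. Here I would test with a purely time-oscillatory field $\vec u^\omega\deq(\phi_1\sin(\omega\,\cdot),0)\in\Asb$, where $\phi_1$ is a fixed Dirichlet eigenfunction, so that $\vec u^\omega(0)=0$. Its $L^2(Q;\R^2)$-norm stays bounded as $\omega\to\infty$, whereas $\norm{\vec u^\omega}_{\As}^2=\norm{\Bb\vec u^\omega}_{\Ts'}^2\ge\norm{\partial_t(\phi_1\sin(\omega\,\cdot))}_{L^2(Q)}^2\sim\omega^2\to\infty$. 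This breaks the reverse inequality, giving the non-equivalence claimed in (ii) and, through the abstract principle above, the strict inclusion $\As\subsetneq L^2(Q;\R^2)$; chaining the two strict inclusions then yields (iii).

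The step I expect to cost the most effort is the \emph{explicit} constant $C_Q=\sqrt{\max\set{1,C_\Omega^2}+\max\set{3T^2,\tfrac{3}{2}T^4}}$ in (i). The qualitative embedding comes for free from \cref{tm: gelfand carrys}, but its constant equals the adjoint stability constant, so the stated (sharper) value is not simply what \cref{co:applications:adjoint wave well-posed} hands over: I would re-run that estimate while tracking constants carefully, transporting the forward constant of \cref{co:Applications:wave-system-B1-B2} (with $\hat{C}=\tfrac{T^2}{2}$ from \cref{rm:Applications:strong-wave-solution:constant}) through the time-reversal construction without loss, and charging the passage $-\Delta_x u_1^*\mapsto H^{-1}(\Omega)$ exactly once rather than doubling the $\Omega$-contribution. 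Everything else — the completeness/open-mapping dichotomy and the two witnessing sequences — is then routine.
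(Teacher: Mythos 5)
Your proposal is correct, and for most of the statement it follows the same route as the paper: items (i) and (iv) are obtained exactly as in the paper's proof, via \eqref{eq: norm X = Bu} and \eqref{eq:applications:norm induced by wave operator} for the norm representation, \eqref{eq: embedding Asb in Asb if bounded} with \cref{lm:application:wave operator bounded} for the $\sqrt{2}$-embedding, and \cref{tm: gelfand carrys} with \cref{co:applications:adjoint wave well-posed} for $\As\hookrightarrow_d L^2(Q;\R^2)$; likewise, for the pair $\Asb$ versus $\As$ both you and the paper use the witness of \cref{lem:Applications:wave-counterexample} together with a bounded-inverse/open-mapping argument (the paper applies it to $\Bb$, you to the identity map -- these are interchangeable). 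The genuine difference is the pair $\As$ versus $L^2(Q;\R^2)$. The paper argues that norm equivalence would force $\As=L^2(Q;\R^2)$, whence $\B$ would make $L^2(Q;\R^2)$ isomorphic to $\Ts'=L^2(I;L^2(\Omega)\times H^{-1}(\Omega))$, and declares this impossible because ``$L^2(\Omega)$ is not isomorph to $H^{-1}(\Omega)$''. As literally stated, that step is not conclusive: $L^2(\Omega)$ and $H^{-1}(\Omega)$ are both separable infinite-dimensional Hilbert spaces, hence abstractly isomorphic; what fails is only that the \emph{canonical} embedding is an isomorphism. Your explicit oscillatory family $\vec{u}^\omega=(\phi_1\sin(\omega\,\cdot),0)\in\Asb$ avoids this entirely: since $u_2^\omega=0$, the first component of $\Bb\vec{u}^\omega$ is $\partial_t u_1^\omega$, so $\norm{\vec{u}^\omega}_{\As}^2=\norm{\Bb\vec{u}^\omega}_{\Ts'}^2\geq \omega^2\int_0^T\cos^2(\omega t)\,dt\to\infty$ while $\norm{\vec{u}^\omega}_{L^2(Q;\R^2)}^2\leq T$, which kills the reverse bound and, through your completeness principle, yields $\As\subsetneq L^2(Q;\R^2)$. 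Your argument is more elementary and, in fact, more rigorous than the paper's at this point.

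One caveat concerns the explicit constant in (i), which you rightly flag as the delicate step. The paper's proof simply cites the definitions, but \cref{co:Applications:wave-system-B1-B2} defines $C_Q^2=2\max\set{1,C_\Omega^2}+6\hat{C}\max\set{1,\tfrac{T^2}{2}}$, which with $\hat{C}=\tfrac{T^2}{2}$ from \cref{rm:Applications:strong-wave-solution:constant} gives $2\max\set{1,C_\Omega^2}+\max\set{3T^2,\tfrac{3}{2}T^4}$ -- a factor $2$ larger in the first term than the value stated in the corollary, so the discrepancy you noticed is an inconsistency internal to the paper, not something your proof must repair. Be aware, though, that the factor $2$ arises from Young's inequality in $\norm{f_2+\Delta_x^{\mathrm{we}}u_1^*}_{L^2(I;H^{-1}(\Omega))}^2\leq 2\norm{f_2}_{L^2(I;H^{-1}(\Omega))}^2+2\norm{\Delta_x^{\mathrm{we}}u_1^*}_{L^2(I;H^{-1}(\Omega))}^2$, and it is not obvious that your proposed sharper bookkeeping removes it; the safe outcome of your re-derivation is item (i) with the (equally legitimate, slightly larger) constant of \cref{co:Applications:wave-system-B1-B2}.
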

\begin{proof}
The first statement is given by \cref{co:applications:adjoint wave well-posed,rm: notes on gelfand carrys} implying $C^*_Q \equiv C_Q$, together with the definition of $C_Q$ and $\hat{C}$ in \cref{co:Applications:wave-system-B1-B2,rm:Applications:strong-wave-solution:constant}, respectively, while $\norm{\Bb}_{\cL(\Asb,\Ts')} \leq \sqrt{2}$ holds by \cref{lm:application:wave operator bounded}. The last statement was shown in \eqref{eq:applications:norm induced by wave operator}. Further, that $\norm{\cdot}_{\As}$ is not equivalent to $\norm{\cdot}_{\Asb}$ follows by \cref{lem:Applications:wave-counterexample} and \eqref{eq: norm X = Bu}. Next, if $\norm{\cdot}_{\As}$ would be equivalent to $\norm{\cdot}_{L^2(Q;\R^2)}$, we would get $\As = L^2(Q;\R^2)$ since $\As$ is dense in $L^2(Q;\R^2)$ by (i) but also complete w.r.t. $\norm{\cdot}_{\As}$. Since $\B:\As \ra \Ts'$ is an isomorphism, we would end up with $L^2(Q;\R^2)$ being isomorph to $\Ts' = L^2(I;L^2(\Omega)\times H^{-1}(\Omega))$, which is not the case as $L^2(\Omega)$ is not isomorph to $H^{-1}(\Omega)$. Thus, we have also shown, that $\As \neq L^2(Q;\R^2)$ has to hold. Finally, by the construction of $\As$, it would follow from $\Asb = \As$, that $\range(\Bb) = \Ts'$, i.e. $\Bb \in \Lis(\Asb,\Ts')$ by the bounded inverse theorem/open mapping theorem, \crefl{it: injective} and \cref{lm:application:wave operator bounded}. This is a contradiction to \cref{lem:Applications:wave-counterexample}, and we conclude $\Asb \neq \As$.
\end{proof}
\begin{remark}
    By \cref{rm: elliptic result hold for general operator}, we can replace $-\Delta_x^{\mathrm{we}}$ by any bounded self-adjoint coercive operator $A_x : H^1_0(\Omega) \ra H^{-1}(\Omega)$ provided that $A_x$ satisfies the assumptions of \cite[Ch.\,3 Thm.\,8.1]{LionsM1972} (i.e. \cref{thm:Applications:strong-wave-solution} holds true), by replacing the norm on $H^1_0(\Omega)$ by $\norm{A_x\cdot}_{H^{-1}(\Omega)}$. 
\end{remark}

\subsubsection{Weak in time}
\label{ch:applications:weak in time wave}
We consider the weak in time variational formulation of the wave equation as given in \cref{ex: weak formulation wave}.
\begin{remark}
    The operator $\Bb$ in \eqref{eq:applications:sym-wave:VF} is bounded, i.e. $\Bb\in\cL(\Asb,\Ts')$ as it holds for $u\in\Asb$ and $v\in\Ts$ by Young's inequality\footnote{$
            ab + cd
            = \sqrt{(ab + cd)^2}
            \leq \sqrt{a^2b^2 + a^2d^2 + b^2c^2 + c^2d^2}
            = \sqrt{a^2 + c^2} \sqrt{b^2 + d^2}$  for all $a,b,c,d\geq 0$.},
    that
    $\abs{\pairing{\Bb u,v}{\Ts}}
        \leq \norm{\partial_t u}_{L^2(Q)} \norm{\partial_t v}_{L^2(Q)} + \norm{\nabla_x u}_{L^2(Q)} \norm{\nabla_x v}_{L^2(Q)}
        = \norm{u}_{\Asb}\norm{v}_{\Ts}$.
\end{remark}
\begin{theorem}\label{thm: weak wave solution for L2}
    The variational problem \eqref{eq:applications:sym-wave:VF} possess a unique solution $u^*\in\Asb$ for all $f\in L^2(Q)$ and the solution satisfies $\norm{u^*}_{H^1_0(\Omega)} \leq \frac{T}{\sqrt{2}}\norm{f}_{L^2(Q)}$.
\end{theorem}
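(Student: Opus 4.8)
The plan is to reduce the statement to the strong-in-time (Lions--Magenes) theory already available in \cref{thm:Applications:strong-wave-solution}, exploiting that the bilinear form in \eqref{eq:applications:sym-wave:VF} is nothing but the second-order wave equation of \cref{ex: wave equation} tested against functions vanishing at $t=T$. Given $f\in L^2(Q)$, I would let $u^*$ be the unique solution provided by \cref{thm:Applications:strong-wave-solution}, so that $\partial_{tt}u^*-\Delta_x^{\mathrm{we}}u^*=f$ in $L^2(I;H^{-1}(\Omega))$ with $u^*(0)=0$ and $\partial_t u^*(0)=0$, and $u^*\in H^1(I;L^2(\Omega))\cap L^2(I;H^1_0(\Omega))$. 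Since $\norm{\cdot}_\Asb^2=\norm{\partial_t\cdot}_{L^2(Q)}^2+\norm{\nabla_x\cdot}_{L^2(Q)}^2$ is the energy norm, membership $u^*\in\Asb$ requires only $\partial_t u^*\in L^2(Q)$, $\nabla_x u^*\in L^2(Q)$ and $u^*(0)=0$, all of which are guaranteed; hence $u^*\in\Asb$ with $\norm{u^*}_\Asb<\infty$.

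Next I would check that this $u^*$ solves \eqref{eq:applications:sym-wave:VF}. Fixing $v\in\Ts$ and pairing the strong equation with $v$ over $Q$, the spatial part gives $\int_I\pairing{-\Delta_x^{\mathrm{we}}u^*,v}{H^1_0(\Omega)}\,dt=\inner{\nabla_x u^*,\nabla_x v}_{L^2(Q)}$, while the time part is integrated by parts,
\begin{equation*}
    \int_I\pairing{\partial_{tt}u^*,v}{H^1_0(\Omega)}\,dt
    =\Big[\pairing{\partial_t u^*,v}{H^1_0(\Omega)}\Big]_0^T-\int_I\pairing{\partial_t u^*,\partial_t v}{H^1_0(\Omega)}\,dt.
\end{equation*}
The boundary term at $t=T$ vanishes because $v(T)=0$ for $v\in\Ts=H^1_{,0}(I;H^1_0(\Omega))$, the one at $t=0$ vanishes because $\partial_t u^*(0)=0$, and since $\partial_t u^*\in L^2(Q)$ the remaining pairing collapses to the $L^2(Q)$ inner product by \eqref{eq: gelfand pairing = product}. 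This leaves exactly $\pairing{\Bb u^*,v}{\Ts}=-\inner{\partial_t u^*,\partial_t v}_{L^2(Q)}+\inner{\nabla_x u^*,\nabla_x v}_{L^2(Q)}=\inner{f,v}_{L^2(Q)}$ for all $v\in\Ts$, as desired.

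The stability estimate is then immediate: \cref{thm:Applications:strong-wave-solution} with the sharp constant $\hat C=\tfrac{T^2}{2}$ from \cref{rm:Applications:strong-wave-solution:constant} yields $\norm{u^*}_{L^2(I;H^1_0(\Omega))}^2\le\tfrac{T^2}{2}\norm{f}_{L^2(Q)}^2$, i.e.\ the claimed bound $\norm{u^*}_{L^2(I;H^1_0(\Omega))}\le\tfrac{T}{\sqrt2}\norm{f}_{L^2(Q)}$. For uniqueness I would show that any $u\in\Asb$ with $\pairing{\Bb u,v}{\Ts}=0$ for all $v\in\Ts$ must vanish: testing with separable $v(t,x)=\psi(t)\phi(x)$, $\psi\in C_0^\infty(I)$, $\phi\in H^1_0(\Omega)$, recovers $\partial_{tt}u=\Delta_x^{\mathrm{we}}u$ distributionally in $t$; as $\Delta_x^{\mathrm{we}}u\in L^2(I;H^{-1}(\Omega))$ this promotes $u$ to the Lions--Magenes regularity class, while test functions with $\psi(0)\neq0$ extract the natural condition $\partial_t u(0)=0$, and $u(0)=0$ holds since $u\in\Asb$. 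The uniqueness part of \cref{thm:Applications:strong-wave-solution} then forces $u=0$.

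The main obstacle I anticipate is the second step: making the time integration by parts rigorous for $H^{-1}(\Omega)$-valued functions and giving meaning to the traces $\partial_t u^*(0)$, $v(0)$, $v(T)$. This rests on the embedding $H^1(I;\Ase)\hookrightarrow C(\overbar{I};\Ase)$ recalled in the Notation (applied to $\partial_t u^*\in H^1(I;H^{-1}(\Omega))$ and to $v$), together with the compatibility of the duality pairing with the $L^2$ inner product on the subclass where $\partial_t u^*$ is square integrable. Once these points are settled, the existence, the estimate and the uniqueness established above are precisely \crefl{it: injective} and \crefl{it: range dense} for the weak-in-time operator, so that \cref{lm: solution of equation sufficient for B1 B2,rm: solution of equation sufficient for B1 B2} become applicable with $\Tse=L^2(Q)$.
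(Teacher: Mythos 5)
Your route is genuinely different from the paper's: the paper gives no argument for \cref{thm: weak wave solution for L2} at all, it simply cites \cite[Theorem 5.1]{SteinZ2020} and \cite{Ladyz1985}, whereas you derive the weak-in-time statement from the strong-in-time Lions--Magenes result (\cref{thm:Applications:strong-wave-solution}) by an integration by parts in time --- structurally the same reduction the paper itself performs for the first-order system in \cref{co:Applications:wave-system-B1-B2}. The verification that the Lions--Magenes solution satisfies \eqref{eq:applications:sym-wave:VF} (the boundary terms at $t=T$ and $t=0$ killed by $v(T)=0$ and $\partial_t u^*(0)=0$, the pairing collapsing to the $L^2(Q)$ inner product via \eqref{eq: gelfand pairing = product}), the extraction of the constant $T/\sqrt{2}$ from $\hat C=T^2/2$, and the uniqueness argument (distributional recovery of $\partial_{tt}u=\Delta_x^{\mathrm{we}}u$, regularity promotion to $H^2(I;H^{-1}(\Omega))$, extraction of $\partial_t u(0)=0$ from test functions with $v(0)\neq 0$, then Lions--Magenes uniqueness) are all sound. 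Note only that the sharp constant comes from \cref{rm:Applications:strong-wave-solution:constant}, which itself appeals to the techniques of \cite{SteinZ2020}, so the dependence on that reference is rerouted rather than removed.

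There is, however, one genuine gap: the membership claim $u^*\in\Asb$. Your justification --- since $\norm{\cdot}_{\Asb}$ is the energy norm, membership requires only $\partial_t u^*\in L^2(Q)$, $\nabla_x u^*\in L^2(Q)$ and $u^*(0)=0$ --- is a non sequitur: the norm placed on a set does not change the set. As defined in \cref{ex: weak formulation wave}, $\Asb$ is the set $H^1_{0,}(I;H^1_0(\Omega))$, so membership requires $\partial_t u^*\in L^2(I;H^1_0(\Omega))$, i.e., $\nabla_x\partial_t u^*\in L^2(Q)$. This is \emph{not} provided by \cref{thm:Applications:strong-wave-solution}, and it genuinely fails for generic $f\in L^2(Q)$: superposing the resonant modes used in the proof of \cref{lem:Applications:wave-counterexample}, i.e., taking $f\deq\sum_k a_k\phi_k\sin(\sqrt{\lambda_k}\,\cdot)$ with $\sum_k a_k^2<\infty$ but $\sum_k\lambda_k a_k^2=\infty$, gives a right-hand side in $L^2(Q)$ whose solution satisfies $\norm{\nabla_x\partial_t u^*}_{L^2(Q)}=\infty$. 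Hence the existence step, as written, does not land in $\Asb$. The theorem is correct --- and your argument goes through verbatim --- only when $\Asb$ is read as the energy space $L^2(I;H^1_0(\Omega))\cap H^1_{0,}(I;L^2(\Omega))$, which is the completion of $H^1_{0,}(I;H^1_0(\Omega))$ with respect to the stated norm; this is the space in which the cited result of \cite{SteinZ2020} is formulated, and it is the only reading under which \cref{ex: weak formulation wave} may legitimately call $\Asb$ a Hilbert space. You should make this identification explicit and place your solution in that space (your ``main obstacle'' paragraph worries about the trace and duality technicalities, which are indeed fine, but misses this set-membership point); inferring membership from finiteness of the norm is the one step of your proof that fails.
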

\begin{proof}
    This is a well known result, see e.g. \cite[Theorem 5.1]{SteinZ2020} for this exact statement or \cite[ch.\,IV Theorems 3.1\,\&\,3.2, eq.\,(3.17)]{Ladyz1985} for a more general setting.
\end{proof}
\begin{remark}
    By the above existence result, \cref{tm:abstract setting well-posed} holds by \cref{lm: solution of equation sufficient for B1 B2,rm: solution of equation sufficient for B1 B2}, while \cref{tm: gelfand carrys} holds as $\Bb$ is symmetric and thus self-adjoint (except for a switch of the initial/terminal conditions encoded in $\Asb$ and $\Ts$, but $\Asb$ and $\Ts$ are isometric isomorphic and thus interchangeable).
\end{remark}
As for the strong in time formulation, the above existence result cannot be generalized to all right-hand sides $f\in \Ts'$ as the operator is not inf-sup stable.
\begin{theorem}[{\cite[Theorem 1.1]{SteinZ2022}}]
    The operator $\Bb$ in \eqref{eq:applications:sym-wave:VF} does not define an isomorphism. In particular, there does not exist an inf-sup constant $\beta>0$ such that $\beta\norm{u}_{\Asb}\leq \norm{\Bb u}_{\Ts'}$ for all $u\in \Asb$, i.e., the inverse of $\Bb$ is not bounded.
\end{theorem}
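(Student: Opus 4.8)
The plan is to construct an explicit sequence $(u^k)_{k\in\N}\subset\Asb$ for which the ratio $\norm{\Bb u^k}_{\Ts'}/\norm{u^k}_{\Asb}$ tends to $0$, following the resonance idea from the proof of \cref{lem:Applications:wave-counterexample}. Since $\Bb$ decouples over the spatial eigenspaces of $-\Delta_x$, I would use a single-mode ansatz $u^k(t,x)\deq g_k(t)\,\phi_k(x)$, where $\phi_k$ and $\lambda_k\to\infty$ are the normalized eigenfunctions and eigenvalues of the spatial Laplacian as in \cref{lem:Applications:wave-counterexample}. This collapses the inf-sup question into a family of one-dimensional-in-time problems indexed by $\lambda_k$: for a test function of the same mode, $v=h(t)\phi_k(x)$, one has $\pairing{\Bb u^k,v}{\Ts}=-\inner{g_k',h'}_{L^2(I)}+\lambda_k\inner{g_k,h}_{L^2(I)}$, while $\norm{u^k}_{\Asb}^2=\norm{g_k'}_{L^2(I)}^2+\lambda_k\norm{g_k}_{L^2(I)}^2$ and $\norm{v}_{\Ts}^2=\norm{h'}_{L^2(I)}^2+\lambda_k\norm{h}_{L^2(I)}^2$, using $\norm{\nabla_x\phi_k}_{L^2(\Omega)}^2=\lambda_k$.

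For the temporal profile I would take the resonant choice $g_k(t)\deq\int_0^t s\sin(\sqrt{\lambda_k}\,s)\,ds$, so that $g_k(0)=g_k'(0)=0$ and hence $u^k\in\Asb=H^1_{0,}(I;H^1_0(\Omega))$. A short computation gives $g_k'(t)=t\sin(\sqrt{\lambda_k}\,t)$ and $g_k''+\lambda_k g_k=2\sin(\sqrt{\lambda_k}\,t)$. Integrating by parts in time in \eqref{eq:applications:sym-wave:VF} — where the boundary terms vanish because $g_k'(0)=0$ and $v(T)=0$ for $v\in\Ts=H^1_{,0}(I;H^1_0(\Omega))$ — I obtain $\pairing{\Bb u^k,v}{\Ts}=\inner{f^k,v}_{L^2(Q)}$ with $f^k(t,x)\deq 2\sin(\sqrt{\lambda_k}\,t)\,\phi_k(x)$. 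Since $f^k\in L^2(Q)=\Tsp\hookrightarrow\Ts'$, the Gelfand triple $(\Ts,\Tsp,\Ts')$ gives $\Bb u^k=f^k$ in $\Ts'$, recovering exactly the right-hand side from \cref{lem:Applications:wave-counterexample}.

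It then remains to estimate both norms as $k\to\infty$. For the trial norm I only need a positive lower bound, which follows from $\norm{u^k}_{\Asb}^2\geq\norm{g_k'}_{L^2(I)}^2=\int_0^T t^2\sin^2(\sqrt{\lambda_k}\,t)\,dt\to\tfrac{T^3}{6}>0$ (by Riemann–Lebesgue). For the residual I exploit spatial orthogonality: expanding a general test function as $v=\sum_j v_j(t)\phi_j(x)$, only the $k$-th mode survives in $\inner{f^k,v}_{L^2(Q)}=\inner{2\sin(\sqrt{\lambda_k}\,\cdot),v_k}_{L^2(I)}$, whereas $\norm{v}_{\Ts}^2=\sum_j\parens[\big]{\norm{v_j'}_{L^2(I)}^2+\lambda_j\norm{v_j}_{L^2(I)}^2}\geq\lambda_k\norm{v_k}_{L^2(I)}^2$. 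Cauchy–Schwarz then yields $\norm{\Bb u^k}_{\Ts'}\leq 2\lambda_k^{-1/2}\norm{\sin(\sqrt{\lambda_k}\,\cdot)}_{L^2(I)}=O(\lambda_k^{-1/2})\to 0$. Combining the two estimates gives $\norm{\Bb u^k}_{\Ts'}/\norm{u^k}_{\Asb}\to 0$, so no inf-sup constant $\beta>0$ can exist and $\Bb^{-1}$ is unbounded.

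The hard part will be the dual norm $\norm{\Bb u^k}_{\Ts'}$. In contrast to the strong-in-time/first-order setting of \cref{lem:Applications:wave-counterexample}, where $\Ts'=L^2(I;W')$ is a Bochner space and $\norm{f^k}_{\Ts'}$ can be written out explicitly, here $\Ts'=[H^1_{,0}(I;H^1_0(\Omega))]'$ admits no such elementary representation, and computing the Riesz representative of $f^k$ is awkward. The resolution is that only an upper bound tending to zero is needed, and this is obtained cheaply by restricting attention to the single active spatial mode, where the factor $\lambda_k$ in $\norm{v}_{\Ts}^2\geq\lambda_k\norm{v_k}_{L^2(I)}^2$ supplies the decay $\lambda_k^{-1/2}$; everything else reduces to elementary one-dimensional integrals.
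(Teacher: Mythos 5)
Your proposal is correct, and it is essentially the proof behind the paper's citation: the paper defers this statement to \cite[Theorem 1.1]{SteinZ2022}, whose argument---and the paper's own proof of the analogous result for the first-order system in \cref{lem:Applications:wave-counterexample}---uses exactly your resonant single-mode construction with $g_k(t)=\int_0^t s\sin(\sqrt{\lambda_k}\,s)\,ds$ and forcing $f^k=2\sin(\sqrt{\lambda_k}\,t)\,\phi_k$. The only (correct and necessary) variation is that, since $\Ts'=[H^1_{,0}(I;H^1_0(\Omega))]'$ has no explicit Bochner representation, you bound $\norm{\Bb u^k}_{\Ts'}$ from above through the single active spatial mode instead of computing it exactly, which suffices because only an upper bound tending to zero is needed.
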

Now, denoting the well-posed and optimally stable extension of $\Asb$ and $\Bb$ by $\As$ and $\B$, respectively, we get the following result, which is proven analogously to \cref{co:applications:collect results wave}.
\begin{corollary}
    For the well-posed extension \eqref{eq:well-posed operator-equation} of \eqref{eq:applications:sym-wave:VF} holds
    \begin{compactenum}[(i)]
        \item $\Asb = H^1_{0,}(I;H^1_0(\Omega)) \hookrightarrow_d \As \hookrightarrow_d L^2(Q)$ and the embedding constants read $\norm{u}_{L^2(Q)} \leq \tfrac{T}{\sqrt{2}} \norm{u}_{\As}$, $u\in \As$ and $\norm{u}_{\As} \leq \norm{u}_{H^1_0(Q)}$, $u\in \Asb$;
        \item the other directions of the norm inequalities do not holds, i.e. $\norm{\cdot}_{\As}$ is neither equivalent to $\norm{\cdot}_{L^2(Q)}$ on $\As$, nor equivalent to $\norm{\cdot}_{H^1_0(Q)}$ on $\Asb$;
        \item $\Asb \subsetneq \As \subsetneq L^2(Q)$, i.e. $\As$ is a strict subset of $L^2(Q)$ and a strict superset of $\Asb$.
    \end{compactenum}
\end{corollary}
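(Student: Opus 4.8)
The plan is to mirror the proof of \cref{co:applications:collect results wave} piece by piece, since the weak in time formulation \eqref{eq:applications:sym-wave:VF} enjoys exactly the same abstract structure. For \textbf{(i)} I would split the chain $\Asb \hookrightarrow_d \As \hookrightarrow_d L^2(Q)$ into its two halves. The lower embedding $\Asb \hookrightarrow_d \As$ is immediate from the boundedness of $\Bb$ recorded in the remark preceding \cref{thm: weak wave solution for L2}: since $\abs{\pairing{\Bb u,v}{\Ts}} \leq \norm{u}_\Asb \norm{v}_\Ts$ gives $\norm{\Bb}_{\cL(\Asb,\Ts')} \leq 1$, the bound \eqref{eq: embedding Asb in Asb if bounded} yields the embedding with constant $1$, i.e.\ $\norm{u}_\As \leq \norm{u}_{H^1_0(Q)}$. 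The upper embedding $\As \hookrightarrow_d L^2(Q)$ would come from \cref{tm: gelfand carrys} with $\Asp = L^2(Q)$ and $\Ase = L^2(Q)$, so the remaining task is to verify its hypothesis: well-posedness and $\norm{\cdot}_\Asp$-stability of the adjoint problem $\Bb^* v = g$ on a dense subset of $L^2(Q)$.

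For that hypothesis I would exploit that, as noted in the remark after \cref{thm: weak wave solution for L2}, $\Bb$ is symmetric and hence self-adjoint up to the interchange of the initial and terminal conditions encoded in $\Asb$ and $\Ts$. Concretely, using the time-reversal operator $\mathbb{T}$ from the proof of \cref{co:applications:adjoint wave well-posed}, which maps $H^1_{,0}(I;\cdot)$ isometrically onto $H^1_{0,}(I;\cdot)$, I would transform the adjoint equation $\Bb^* v = g$ into a primal equation $\Bb w = \mathbb{T} g$, solvable for every $g \in L^2(Q)$ by \cref{thm: weak wave solution for L2}, and then set $v^* \deq \mathbb{T} w$. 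The stability estimate of \cref{thm: weak wave solution for L2}, read as a bound on the full $\Asb$-norm of $w$ by $\tfrac{T}{\sqrt 2}\norm{\mathbb{T}g}_{L^2(Q)} = \tfrac{T}{\sqrt2}\norm{g}_{L^2(Q)}$, then transfers isometrically to $\norm{v^*}_\Ts$, furnishing $C^* = \tfrac{T}{\sqrt2}$, which is precisely the embedding constant appearing in $\norm{\cdot}_\As$ via \eqref{eq: embedding As in Asp} and claimed in \textbf{(i)}.

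Parts \textbf{(ii)} and \textbf{(iii)} I would then obtain by the same arguments as in \cref{co:applications:collect results wave}. The non-equivalence of $\norm{\cdot}_\As$ and $\norm{\cdot}_\Asb$ and the strict inclusion $\Asb \subsetneq \As$ both rest on the companion result from \cite{SteinZ2022} that $\Bb$ admits no inf-sup constant: combined with the identity $\norm{\cdot}_\As = \norm{\Bb\cdot}_{\Ts'}$ from \eqref{eq: norm X = Bu}, any reverse bound $\beta\norm{u}_\Asb \leq \norm{u}_\As$ would contradict this, ruling out equivalence, and $\Asb = \As$ would force $\range(\Bb) = \Ts'$, whence the bounded inverse theorem (using injectivity and boundedness of $\Bb$) would make $\Bb$ an isomorphism, again a contradiction. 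For $\As \subsetneq L^2(Q)$ I would argue by contradiction: equality $\As = L^2(Q)$ (as sets with equivalent norms, using that $\As$ is dense in and complete as a subspace of $L^2(Q)$) would turn $\B$ into an isomorphism $L^2(Q) \to \Ts'$, which I would rule out by the regularity mismatch between $L^2(Q)$ and $\Ts' = [H^1_{,0}(I;H^1_0(\Omega))]'$, exactly as $L^2(\Omega) \not\cong H^{-1}(\Omega)$ was used in the strong in time case.

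The step I expect to be most delicate is this last strictness argument, together with securing the correct norm control feeding \cref{tm: gelfand carrys}. Unlike the strong in time system, where $\Ts'$ carries the explicit product structure $L^2(I;L^2(\Omega)\times H^{-1}(\Omega))$ on which the obstruction $L^2(\Omega)\not\cong H^{-1}(\Omega)$ is transparent, here $\Ts'$ is a single dual Sobolev--Bochner space and the non-isomorphism with $L^2(Q)$ is less immediate; should the abstract argument not transfer cleanly, I would instead establish non-equivalence directly, reusing the eigenfunction family $\vec u^k$ of \cref{lem:Applications:wave-counterexample}, adapted to the scalar weak in time setting, to produce functions bounded in $L^2(Q)$ but unbounded in $\norm{\cdot}_\As = \norm{\Bb\cdot}_{\Ts'}$. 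I would also take care that the stability bound quoted in \cref{thm: weak wave solution for L2} is genuinely an estimate in the full trial norm $\norm{\cdot}_\Asb$, controlling both $\partial_t u^*$ and $\nabla_x u^*$ in $L^2(Q)$ through the wave energy identity, since only then does \cref{tm: gelfand carrys} deliver the stated embedding $\As \hookrightarrow_d L^2(Q)$ with the advertised constant.
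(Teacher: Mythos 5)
Your proposal is correct and takes essentially the same route as the paper, whose entire proof of this corollary is the statement that it is \enquote{proven analogously to \cref{co:applications:collect results wave}}: you carry out exactly that analogy, namely the lower embedding from $\norm{\Bb}_{\cL(\Asb,\Ts')}\leq 1$ via \eqref{eq: embedding Asb in Asb if bounded}, the upper embedding from \cref{tm: gelfand carrys} after solving the adjoint problem by time reversal (the scalar analogue of \cref{co:applications:adjoint wave well-posed}, with $C^*=\tfrac{T}{\sqrt{2}}$ from \cref{thm: weak wave solution for L2}), and parts (ii)--(iii) from the inf-sup failure of \cite[Theorem 1.1]{SteinZ2022} combined with \eqref{eq: norm X = Bu}, the bounded inverse theorem, and the completeness/density argument. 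Your two flagged concerns (that the stability estimate must control the full $\Asb$-norm, and that the non-isomorphism argument for $\As\subsetneq L^2(Q)$ is less transparent here than in the product-space setting of \S\ref{ch:applications:strong in time wave}) are well placed, and your eigenfunction fallback is a sound remedy, but neither changes the approach.
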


\begin{remark}
    Although constructed by completely different means, the well-posed and optimally stable extension \eqref{eq:well-posed operator-equation} of the weak wave equation in \cref{ex: weak formulation wave} corresponds (up to isometric isomorphism) exactly with the well-posed and optimally stable formulation of the wave equation introduced in \cite{SteinZ2022}, with $\As \equiv \mathcal{H}_{0;0,}$ and $\B \equiv \mathcal{E}^*\square\widetilde{(\cdot)}$. This becomes clear by the uniqueness of the completion and continuous extension as stated in \cref{rm: notes on extended definition}. Moreover, our framework also covers other formulations, e.g., \cite{FuhrerGK2025}, where the wave equation is reformulated as a first order system in the velocity and the flux variable. 
\end{remark}
\begin{remark}
    By \cref{rm: elliptic result hold for general operator}, we can replace $-\Delta_x^{\mathrm{we}}$ by any bounded self-adjoint coercive operator $A_x : H^1_0(\Omega) \ra H^{-1}(\Omega)$ provided that $A_x$ satisfies the assumptions of \cite[ch.\,IV Theorems 3.1\,\&\,3.2]{Ladyz1985} (i.e. \cref{thm: weak wave solution for L2} holds true), by replacing the norm on $H^1_0(\Omega)$ by $\norm{A_x\cdot}_{H^{-1}(\Omega)}$. 
\end{remark}

\subsubsection{Ultra-weak in time}
\label{ch:applications:ultraweak in time wave}
We consider the ultra-weak in time variational formulation of the wave equation as given in \cref{ex: ultra-weak formulation wave}. 
As this setting can be reduced to the strong in time formulation given in \cref{ex: strong formulation wave}, we denote by $\Bb,\Asb$ and $\Ts$ the strong in time operator, trial space and test space, respectively, as given in \cref{ex: strong formulation wave} and denote the ultra-weak in time operator, trial space and test space as given in \cref{ex: ultra-weak formulation wave} by $\Bb^{\mathrm{uw}}$, $\Asb^{\mathrm{uw}}$ and $\Ts^{\mathrm{uw}}$, respectively. Using the flip operator $\mathbb{F}\begin{psmallmatrix}x_1\\x_2\end{psmallmatrix} \deq \begin{psmallmatrix}x_2\\x_1\end{psmallmatrix}$ and the time reversal operator $\mathbb{T}f(t) \deq f(T-t)$ as introduced in \S\ref{ch:applications:strong in time wave}, we get
\begin{equation*}
    \Asb^{\mathrm{uw}} = \mathbb{TF}\Ts = \mathbb{F}\Ts,\quad
    \Ts^{\mathrm{uw}} \deq \mathbb{TF}\Asb,\quad
    \Pairing{\Bb^{\mathrm{uw}} \vec{u},\vec{v}}{[\Ts^{\mathrm{uw}}]'}{\Ts} = \Pairing{\vec{u},-\partial_t \vec{v}+A^* \vec{v}}{\mathbb{F}\Ts}{\mathbb{F}\Ts'},
\end{equation*}
for all $\vec{u} \in\Asb^{\mathrm{uw}}$ and all $\vec{v}\in \Ts^{\mathrm{uw}}$.
Using $-\mathbb{TF}\partial_t = \partial_t \mathbb{TF}$, $\mathbb{TF}A^* = A\mathbb{TF}$ and $(\mathbb{TF})^* = \mathbb{TF}$ as stated in the proof of \cref{co:applications:adjoint wave well-posed}, it holds for all $\vec{u}\in \Asb^{\mathrm{uw}}$ and all $\vec{v} \in \Ts^{\mathrm{uw}}$, that
\begin{align*}
    &\pairingb{\Bb^{\mathrm{uw}} \vec{u},\vec{v}}{\Ts^{\mathrm{uw}}}
    = \pairing{-\partial_t\vec{v}+A^*\vec{v},\vec{u}}{\mathbb{TF}\Ts}
    = \pairing{-\mathbb{TF}\partial_t\vec{v}+\mathbb{TF}A^*\vec{v},\mathbb{TF}\vec{u}}{\Ts}\\
    &\qquad= \pairing{\partial_t\mathbb{TF}\vec{v}+A\mathbb{TF}\vec{v},\mathbb{TF}\vec{u}}{\Ts}
    = \pairing{\Bb\mathbb{TF}\vec{v},\mathbb{TF}\vec{u}}{\Ts}
    = \pairing{\Bb^*\mathbb{TF}\vec{u},\mathbb{TF}\vec{v}}{\Asb}\\
    &\qquad= \pairingb{\mathbb{TF}\Bb^*\mathbb{TF}\vec{u},\vec{v}}{\Ts^{\mathrm{uw}}}.
\end{align*}
Thus, it holds $\Bb^{\mathrm{uw}} = \mathbb{TF}\Bb^*\mathbb{TF}$ and by a similar calculation $(\Bb^{\mathrm{uw}})^* = \mathbb{TF}\Bb\mathbb{TF}$. With that, we can use \cref{lm:application:wave operator bounded,co:Applications:wave-system-B1-B2,lem:Applications:wave-counterexample,co:applications:adjoint wave well-posed} to state similar results regarding $\Bb^{\mathrm{uw}}$ and $(\Bb^{\mathrm{uw}})^*$, noting, that $\Bb^{\mathrm{uw}}$ is bounded / isomorphic if and only if $(\Bb^{\mathrm{uw}})^*$ is bounded / isomorphic.
The only result in \S\ref{ch:applications:strong in time wave}, that cannot be reproduced for $\Bb^{\mathrm{uw}}$ is the norm representation \eqref{eq:applications:norm induced by wave operator} as we do not have an explicit formula for the dual norm or for the inverse of the Riesz operator on $\Asb$ (and hence on $\Ts^{\mathrm{uw}} = \mathbb{TF}\Asb$) as it was the case in \eqref{eq:Applications:strong wave:dual Ts}. Thus, we are not able to derive a representation formula for $\norm{\Bb^{\mathrm{uw}}\cdot}_{[\Ts^{\mathrm{uw}}]'}$, neither by a direct calculation nor by using \eqref{eq:analysis:abstract supremizer}. 
Now, denoting the well-posed and optimally stable extension  of $\Asb^{\mathrm{uw}}$ and $\Bb^{\mathrm{uw}}$ by $\As^{\mathrm{uw}}$ and $\B^{\mathrm{uw}}$, respectively, \cref{co:applications:collect results wave} holds true after replacing every $\Asb$ and $\As$ by $\Asb^{\mathrm{uw}}$ and $\As^{\mathrm{uw}}$, respectively, except for the norm representation (\ref{it:collect results wave - norm representation}).

\section{Conclusions and Outlook}
\label{sec:conclusions}
In this paper, we presented a general abstract framework towards well-posed and (optimally) stable formulations of linear operator equations. The starting point is always a classical formulation, which does not need to be well-posed. This requires a Gelfand triple structure for the test space. The second step is to restrict the space for the right-hand sides in such a way that the operator equation admits a solution for such right-hand side data (think of smooth functions). Finally, we form a completion for the trial space and a unique continuous extension of the operator. This procedure can be made more explicit if also the trial space allows for a Gelfand triple structure.

This general framework is applied to the Poisson, heat and wave equation, the latter two in a variational space-time setting. Our findings are summarized in Table \ref{Tab:Summary}. We reproduce well-known results concerning strong, weak and ultra-weak formulations of elliptic and parabolic problems. However, the presented setting also applies for the hyperbolic wave equation, where we derive well-posed formulations, which are (to the very best of our knowledge) new. We can characterize trial spaces and induced norms in such a way that the formulation of the wave equation in these spaces is well-posed and optimally stable. It turns out that these are non-standard Sobolev-type spaces.
\begin{table}[!htb]
\renewcommand{\arraystretch}{2}
\footnotesize
\begin{center}
\begin{tabular}{l|l|l|l|l|l}
    Eq. & Form & $U$ & $V$ & $\bar{U}$ & Ref.\ \\ \hline\hline
    Poisson 
    & strong     & $H^\Delta \cap H^1_0$ & $L^2$ & $U$ 
    & \S\ref{ch:applications:strong Poisson} \\ \cline{2-6}
    & weak       & $H^1_0$ & $H^1_0$ & $U$ 
    & \S\ref{ch:applications:weak Poisson}\\ \cline{2-6}
    & ultra-weak & $L^2$ & $H^\Delta \cap H^1_0$ & $U$ 
    & \S\ref{ch:applications:utraweak Poisson}\\ \hline
    heat
    & strong in $t$ & \parbox{1.85cm}{$L^2(I;H^1_0)$ \\ $\cap\, H^1_{0,}(I;H^{-1})$} & $L^2(I;H^1_0)$ & $U$ 
    & \S\ref{ch:applications:strong in time heat} \\ \cline{2-6}
    & weak in $t$   & $L^2(I;H^1_0)$ & \parbox{2.0cm}{$L^2(I;H^1_0)$ \\ $\cap\, H^1_{,0}(I;H^{-1})$} & $U$ 
    & \S\ref{ch:applications:weak in time heat} \\ \hline
    wave
    & \parbox{1.6cm}{strong in $t$\\ (1$^{\text{st}}$ or.)}& \parbox{2.2cm}{
    $H^1_{0,}(I; L^2\times H^{-1})$ \\ $\cap\, L^2(I;H^1_0 \times L^2)$} & $L^2(I;L^2\times H^1_0)$ 
    & \parbox{2.6cm}{$\Asb\!\hookrightarrow_d\!\As\!\hookrightarrow_d L^2(Q)^2$\\
    $\Asb \subsetneq \As \subsetneq L^2(Q)^2$ }
    & \S\ref{ch:applications:strong in time wave}
    \\ \cline{2-6}
    & weak in $t$   & $H^1_{0,}(I;H^1_0)$ & $H^1_{,0}(I; H^1_0)$
    & \parbox{2.6cm}{$\Asb\!\hookrightarrow_d\!\As\!\hookrightarrow_d L^2(Q)$\\
    $\Asb \subsetneq \As \subsetneq L^2(Q)$ }    
    & \S\ref{ch:applications:weak in time wave} \\ \cline{2-6}
    & \parbox{1.6cm}{ultra-weak in $t$ (1$^{\text{st}}$ or.)} 
    & $L^2(I;H^1_0\times L^2)$ 
    & \parbox{2.2cm}{
    $H^1_{,0}(I; H^{-1}\times L^2)$ \\ $\cap\, L^2(I;L^2\times H^1_0)$} 
    & \parbox{2.6cm}{$\Asb\!\hookrightarrow_d\!\As\!\hookrightarrow_d L^2(Q)^2$\\
    $\Asb \subsetneq \As \subsetneq L^2(Q)^2$}
    & \S\ref{ch:applications:ultraweak in time wave} \\ \hline
\end{tabular}
\caption{\label{Tab:Summary}Summary of the application of the general framework to the described examples. We omit the dependency on $\Omega$ for brevity.}
\end{center}
\end{table}
\normalsize

As already indicated above, this paper is meant to lay the theoretical foundation for a subsequent numerical discretization (in terms of Galerkin and Petrov-Galerkin schemes) and also for model reduction of parameterized linear operator equations. This will be the topic of subsequent parts of this work. It is clear that the numerical realization of the involved norms will be a challenge. Concerning model reduction, we will investigate to which extend the combination of parameter-dependent trial spaces (i.e., leaving the realm of linear model reduction and the known barrier of the Kolmogorov $n$-width for transport- and wave-type problems) and parameter-independent test spaces (allowing for an efficient computation of the norm of the residual as an a posteriori error estimator) might be beneficial.

Moreover, we are aiming to apply the presented framework also to first order transport problems, singular integral  and Schrödinger-type operators.

\section*{Acknowledgments}
M. Feuerle acknowledges support by the Federal State of Baden-Württemberg within the Cooperative Research Training Group \enquote{Data Science and Analytics}.

\bibliographystyle{abbrv}
\bibliography{literature_converted_to_BibTeX.bib}

\end{document}